\numberwithin{equation}{section}
\newtheorem{theorem}{Theorem}
\newtheorem{lemma}[theorem]{Lemma}
\newtheorem{proposition}[theorem]{Proposition}
\newtheorem{remark}[theorem]{Remark}
\newtheorem{bigthm}{Theorem}   
\providecommand{\U}[1]{\protect\rule{.1in}{.1in}}
\providecommand{\lb}{\left(}
\providecommand{\rb}{\right)}
\providecommand{\lbr}{\left\{}
\providecommand{\rbr}{\right\}}
\providecommand{\dd}{{\rm d}}
\providecommand{\be}[1]{\begin{equation}\label{#1}}
\providecommand{\ee}{\end{equation}}
\providecommand{\1}{\ifmmode {1\hskip -3pt \rm{I}}}
\providecommand{\df}{\stackrel{\Delta}{=}}
\providecommand{\eqvs}{\stackrel{\sim}{=}}
\providecommand{\leqs}{\lesssim}            
\providecommand{\geqs}{\gtrsim}             
\providecommand{\smo}[1]{{\mathrm o}_{#1}}
\providecommand{\bigof}[1]{{\mathrm O}\lb #1\rb }
\providecommand{\taub}{\tau_\beta}
\providecommand{\sumtwo}[2]{\sum_{\substack{#1 \\ #2}}} 
\providecommand{\abs}[1]{\left| #1\right|}
\providecommand{\calC}{\mathcal{C}}
\providecommand{\calD}{\mathcal{D}}
\providecommand{\calE}{\mathcal{E}}
\providecommand{\calG}{\mathcal{G}}
\providecommand{\calH}{\mathcal{H}}
\providecommand{\calK}{\mathcal{K}}
\providecommand{\calL}{\mathcal{L}}
\providecommand{\calR}{\mathcal{R}}
\providecommand{\calS}{\mathcal{S}}
\providecommand{\calT}{\mathcal{T}}
\providecommand{\calV}{\mathcal{V}}
\providecommand{\calY}{\mathcal{Y}}
\providecommand{\fra}{\mathfrak{a}}
\providecommand{\frb}{\mathfrak{b}}
\providecommand{\frc}{\mathfrak{c}}
\providecommand{\frg}{\mathfrak{g}}
\providecommand{\frl}{\mathfrak{l}}
\providecommand{\frn}{\mathfrak{n}}
\providecommand{\frB}{\mathfrak{B}}
\providecommand{\frS}{\mathfrak{S}}
\providecommand{\bbB}{\mathbb{B}}
\providecommand{\bbC}{\mathbb{C}}
\providecommand{\bbD}{\mathbb{D}}
\providecommand{\bbE}{\mathbb{E}}
\providecommand{\bbL}{\mathbb{L}}
\providecommand{\bbN}{\mathbb{N}}
\providecommand{\bbP}{\mathbb{P}}
\providecommand{\bbQ}{\mathbb{Q}}
\providecommand{\bbR}{\mathbb{R}}
\providecommand{\bbZ}{\mathbb{Z}}
\providecommand{\sfa}{\mathsf{a}}
\providecommand{\sfd}{\mathsf{d}}
\providecommand{\sfe}{\mathsf{e}}
\providecommand{\sff}{\mathsf{f}}
\providecommand{\sfu}{\mathsf{u}}
\providecommand{\sfv}{\mathsf{v}}
\providecommand{\sfw}{\mathsf{w}}
\providecommand{\sfx}{\mathsf{x}}
\providecommand{\sfy}{\mathsf{y}}
\providecommand{\sfz}{\mathsf{z}}
\providecommand{\sfA}{\mathsf{A}}
\providecommand{\sfD}{\mathsf{D}}
\providecommand{\sfH}{\mathsf{H}}
\providecommand{\sfM}{\mathsf{M}}
\providecommand{\sfP}{\mathsf{P}}
\providecommand{\sfS}{\mathsf{S}}
\providecommand{\sfU}{\mathsf{U}}
\providecommand{\sfV}{\mathsf{V}}
\providecommand{\sfW}{\mathsf{W}}
\providecommand{\sfX}{\mathsf{X}}
\providecommand{\sfY}{\mathsf{Y}}
\providecommand{\sfZ}{\mathsf{Z}}
\providecommand{\step}[1]{S{\small TEP}\,#1}
\providecommand{\wbf}{w_\beta^{\sff}} 
\begin{document}

\title[{Formation of Facets}]
{\textbf{Formation of Facets for an Effective Model of Crystal Growth }}

\begin{abstract}

We study  an effective model of microscopic facet formation
for low temperature three dimensional microscopic Wulff crystals
above the droplet condensation threshold. The model we consider is
a $2+1$ solid on solid surface coupled with high and low
density bulk Bernoulli fields.
At equilibrium the surface stays flat. Imposing a canonical constraint
on excess number of particles forces the surface to ``grow'' through the sequence of
spontaneous creations of macroscopic size monolayers. We prove that
at all sufficiently low temperatures, as
the excess particle constraint is tuned, the
model undergoes an infinite sequence of first order transitions,
which traces an infinite sequence of first order transitions in the
underlying variational problem. Away from transition
values of canonical constraint
we  prove sharp concentration results for the rescaled level lines
around solutions of the limiting variational problem.

\end{abstract}

\author{Dmitry Ioffe}
\address{Faculty of IE\&M, Technion, Haifa 32000, Israel}
\email{ieioffe@ie.technion.ac.il}
\thanks{The research of D.I. was supported by
Israeli Science Foundation grant 1723/14, by The Leverhulme Trust through International
Network Grant {\em Laplacians, Random Walks, Bose Gas, Quantum Spin Systems}  and
by the Meitner Humboldt Award.}
\author{Senya Shlosman}
\address{Skolkovo Institute of Science and Technology, Moscow, Russia;
Aix Marseille Universit\'e, Universit\'e de Toulon,
CNRS, CPT UMR 7332, 13288, Marseille, France;
Inst. of the Information Transmission   Problems,
RAS, Moscow, Russia}
\email{senya.shlosman@univ-amu.fr}
\thanks{The research of S.S.  has been carried out in the framework of the 
Labex Archimede (ANR-11-LABX-0033) and of the A*MIDEX project (ANR-11-IDEX-0001-02), 
funded by the "Investissements d'Avenir" French Government programme managed by the French National Research Agency (ANR). 
Part of this work has been carried out at IITP RAS. The support of 
Russian Foundation for Sciences (project No. 14-50-00150) is gratefully acknowledged.}

\date{\today}

\maketitle

\section{Introduction}
Low temperature three dimensional equilibrium crystal shapes exhibit flat
facets, see e.g. \cite{BFL82}, \cite{BMF86}, \cite{MS99}.
It is known that lattice oriented low temperature microscopic
interfaces stay flat under Dobrushin boundary conditions \cite{Dobrushin}.
But the  $\bbL_1$-theory, which is the base of the
microscopic justification of Wulff construction \cite{B99,CP00,BIV00} in three and higher
dimensions,
does not directly address
fluctuations of microscopic shapes on scales smaller than the linear size of
the system. In particular, the existence and the microscopic structure of facets
remains an open question even at very low temperatures.

{
In a sense, this
issue is complementary to a large body of works, see for instance
\cite{CKP01, CerfK01, FS03, Ken08} and references to  later papers,
and also a recent review \cite{Ok16},
which focus on study of the corners of
zero or low temperature microscopic crystals.
}

For low temperature $2+1$ SOS (solid on solid) interfaces under canonical
constraints on the volume below the microscopic surface, existence of flat
microscopic facets was established in \cite{BSS05}. Here we consider
facet formation for a SOS model coupled to  high and low density
bulk Bernoulli fields which are supposed to mimic  coexisting phases of the
three dimensional model.

The phenomenon of droplet condensation in the framework of the Ising model was
first described in the papers \cite{DS1}, \cite{DS2}. There it was considered
the Ising model at low temperature $\beta^{-1}$, occupying a $d$-dimensional
box $T_{N}^{d}$ of the linear size $2N$ with periodic boundary conditions. The
ensemble was the canonical one:   the total magnetization,
\[
M_{N}\,=\,\,\sum\sigma_{t},
\]
was fixed. In case $M_{N}=m^{\ast}\left(  \beta\right)  \left\vert T_{N}%
^{d}\right\vert ,$ where $m^{\ast}\left(  \beta\right)  >0$ is the spontaneous
magnetization, the typical configuration looks as a configuration of the
$\left(  +\right)  $-phase:  the spins are taking mainly the values $+1,$
while the values $-1$ are seen rarely, and the droplets of minuses in the box
$T_{N}^{d}$ are at most of the size of $K\left(  d\right)  \ln N.$ In order to
observe the condensation of small $\left(  -\right)  $-droplets into a big one
it is necessary to increase their amount, which can be achieved by
considering  a different canonical constraint:
\[
M_{N}=m^{\ast}\left(  \beta\right)  \left\vert T_{N}^{d}\right\vert -b_{N},
\]
$b_{N}>0.$ It turns out that if ${b_{N}}/{\left\vert T_{N}^{d}\right\vert
^{\frac{d}{d+1}}}\rightarrow0$ as $N\rightarrow\infty,$ then in the
corresponding canonical ensemble all the droplets are still microscopic, not
exceeding $K\left(  d\right)  \ln N$ in linear size. On the other hand, once
$\liminf_{N\rightarrow\infty}
{b_{N}}/{\left\vert T_{N}^{d}\right\vert
^{\frac{d}{d+1}}}>0,$ the situation becomes different: among many $\left(
-\right)  $-droplets there is one, $\mathcal{D},$ of the linear size $\ $of
the order of $\left(  b_{N}\right)  ^{1/d}\geq N^{\frac{d}{d+1}},$ while all
the rest of the droplets are still at most logarithmic. Therefore $b_{N}%
\sim\left\vert T_{N}^{d}\right\vert ^{\frac{d}{d+1}}$ can be called the
\textit{condensation threshold, or dew-point}.

When $b_{N}$ grows beyond the condensation threshold, the big droplet
$\mathcal{D}$ grows as well. To study this growth process, or specifically
to try to get an insight of the process of formation of new atomic-scale
layers on microscopic facets,
we have suggested in
our paper "Ising model fog drip: the first two droplets" \cite{IS08} a simplified
growth model, where one puts the observer on the surface $\mathcal{S}$ of
$\mathcal{D}$ and studies the evolution of this surface $\mathcal{S}$ as the
volume of $\mathcal{D}$ grows.

It was argued in   \cite{IS08} that the evolution of $\mathcal{S}$ proceeds through
the spontaneous creations of extra monolayers. Each new monolayer has
one-particle thickness, while the breadth of the $k$-th monolayer is
$\sim c_{k}N,$ with $c_{k}\geq c_{crit}=c_{crit}\left(  \beta\right)  >0.$ It
then grows in breadth for some time, until a new monolayer is spontaneously
created at its top, of the size of $c_{k+1}N.$

In \cite{IS08} we were able to analyze this process only for the first two monolayers.
Our technique at this time was not sufficient, and we were unable to control
the effect of the interaction between the two monolayers when their boundaries
come into contact and start to influence each other. This technique was later
developed in our paper \cite{IST15}, so we are able now to conclude our studies. The
present paper thus contains the material of what we have promised in \cite{IS08} to
publish under the provisional title "Ising model fog drip, II: the puddle".

In the present work we can handle any finite number of monolayers. What we
find quite interesting is that the qualitative picture of the process of
growth of monolayers changes, as $k$ increases. Namely, for
the few initial values of $k=1,2,...,k_{c}$ the size $c_{k}N$ of the $k$-th
monolayer at the moment of its creation is strictly smaller than the size  of
the underlying $\left(  k-1\right)  $-th layer. Thus, the picture is qualitatively
the same as that of the lead crystal -
{
there is an extensive physical literature on the
latter subject, for instance
see Figure~2 in \cite{B03} and  discussions in \cite{EBWTR-RW,BYS}
}
However, for $k>k_{c}$ this is not the case any more, and the size $c_{k}N$ is
exactly the same as the size of the underlying $\left(  k-1\right)  $-th layer
at the creation moment.

Still, creation of each new layer $k$ bears a signature of
first order transition - at the moment of creation all the underlying layers shrink.
This transition resembles spontaneous creation of mesoscopic size droplets
in two-dimensional Ising model \cite{BCK03}, and as in the latter work it is
related to first order transitions in the underlying variational problem.

Our picture has to be compared with a similar one, describing the
layer formation in the SOS model above the wall, studied in
a series of works \cite{CLMST14,CMT14, CLMST13}.
Unlike our model, all the layers of the SOS surface above the wall have different
asymptotic shapes. The reason is that the repulsion from the wall results in
different
area tilts for different layers there, and, accordingly, gives rise to  different
solutions of the corresponding variational problem.
{
Another important difference is that in the SOS model  \cite{CLMST14} one never sees the top
monolayer  detached from the rest of them, as in the model we consider.
}
Nevertheless, we believe that in our model with   $k$ monolayers the fluctuations of
their boundaries  in the vicinity of the (vertical) wall are, as in the case
of entropic repulsion \cite{CLMST13}, of
the order of $N^{1/3}$, and their  behavior is given, after
appropriate scaling, by $k$ non-intersecting Ferrari-Spohn diffusions
\cite{FS05},
as in \cite{ISV15, IVW17}. See \cite{IV16} for a review.


\section{The Model and the Results}

\subsection{The Model}

We will study the following simple model of
facets formation on interfaces between two coexisting phases which was introduced in
\cite{IS08}:
The system is
confined to the 3D box
\[
\Lambda_{N}=B_{N}\times\left\{  -\frac{N+1}{2},-\frac{N-1}{2},...,\frac
{N-1}{2},N+\frac{N+1}{2}\right\}  ,
\]
where  $N\in2\bbN$ is even, $B_{N}$ is a two-dimensional $N\times N$ box;
\[
B_{N}\,=\,\left\{  -N,\dots,N\right\}  ^{2} = N\bbB_1\cap \bbZ^2,
\]
 and $\bbB_1 = [-1,1]^2$.
The interface $\Gamma$  between two phases is supposed to be an SOS-type surface; it is
uniquely defined by a function
\begin{equation}
 h_{\Gamma}:\mathring{B}_{N}\rightarrow\left\{  -\frac{N}{2},-\frac{N}{2}+1,...,\frac{N}{2}\right\}  ,
\label{eq:hFunction}%
\end{equation}
where $\mathring{B}_{N}$ is the interior of $B_{N}$. We assume that
the interface $\Gamma$ is pinned at zero height on the boundary $\partial
B_{N}$, that is $h_{\Gamma}\equiv0$ on $\partial B_{N}$. Such a
surface $\Gamma$ splits $\Lambda_{N}$ into two parts; let us denote by $V_{N}\left(
\Gamma\right)  $ and $S_{N}\left(  \Gamma\right)  $ the upper and the lower
halves.
We suppose that $\Gamma$ separates the low density phase (vapor) in the
upper half of the box from the high density phase (solid) in the lower half.
This is modeled in the following fashion: First of all, the marginal
distribution of $\Gamma$ obeys the SOS statistics at an inverse temperature $\beta$.
That is we associate with
$\Gamma$ a weight,
\be{eq:GammaWeight-SOS}
w_{\beta}(\Gamma) = {\rm exp}\lbr -\beta \sum_{x\sim y}
\abs{ h_\Gamma (x ) - h_\Gamma (y )}\rbr,
\ee
where we extended  $h_\Gamma \equiv 0$ outside ${B}_{N}$,
and the sum is over all unordered pairs of nearest neighbors of $\bbZ^d$.

Next, $\Gamma$ is coupled to high and low density Bernoulli bulk fields:
Let $0<p_{v} = p_v (\beta ) <p_{s} = p_s (\beta ) <1$.
A relevant choice of $p_v, p_s$ with a simplification of
the three dimensional Ising model in mind
would be  $p_{v} (\beta ) = {\rm e}^{-6\beta} = 1- p_s (\beta )$.
In the sequel we shall assume\footnote{Actually, main results
hold even with faster decay than \eqref{eq:p-beta}.}
\be{eq:p-beta}
\liminf_{\beta\to\infty} \frac{1}{\beta}
\log\lb \min \lbr p_v (\beta ) , 1-p_s (\beta )\rbr\rb > -\infty .
\ee
{At each site }$i\in V_{N}$ { we
place a particle with probability }$p_{v}$,{ while at each site }$i\in
S_{N}$ { we place a particle with probability }$p_{s}.$ Alternatively,
let $\left\{  \xi_{i}^{v}\right\}  $ and $\left\{  \xi_{i}^{s}\right\}  $ be
two independent collection of Bernoulli random variables with parameters
$p_{v}$ and $p_{s}$.
Then the empirical
field of particles \emph{given} interface $\Gamma$ is
\[
\sum_{i\in V_{N}(\Gamma)}\xi_{i}^{v}\delta_{i}+\sum_{j\in S_{N}(\Gamma)}%
\xi_{j}^{s}\delta_{j}.
\]
All together, the joint distribution of the triple $\left(  \Gamma,\xi^{v}%
,\xi^{s}\right)  $ is given by
\begin{equation}
\mathbb{{P}}_{N, \beta} \left(  \Gamma,\xi^{v},\xi^{s}\right)  \propto w_{\beta
}(\Gamma)\prod_{i\in V_{N}}p_{v}^{\xi_{i}^{v}}\left(  1-p_{v}\right)
^{1-\xi_{i}^{v}}\prod_{j\in S_{N}}p_{s}^{\xi_{j}^{s}}\left(  1-p_{s}\right)
^{1-\xi_{j}^{s}}. \label{eq:tripple}%
\end{equation}
We denote the total number of particles in vapor and solid phases, and total number
of particles in the system as
\be{eq:Sigma}
\Xi_{v}=\sum_{i\in V_{N}(\Gamma)}\xi_{i}^{v}\, ,\  \Xi_{s}=\sum_{j\in
S_{N}(\Gamma)}\xi_{j}^{s} \quad {\rm and}\quad \Xi_N = \Xi_{v} + \Xi_{s}
\ee
respectively. The conditional distributions of
$\Xi_{v}$ and $\Xi_{s}$ given $\Gamma$ are binomial $\mathrm{Bin}\left(
|V_{N}|,p_{v}\right)  $ and $\mathrm{Bin}\left(  |S_{N}|,p_{s}\right)  $.

By the definition of the model, the expected total number of particles
\be{eq:p}
  \bbE_{N, \beta}\lb  \Xi_N \rb
  =\frac{p^s +p^v}{2}\abs{\Lambda_N} \equiv  \rho_\beta N^3 .
\ee
Formation of facets is modeled in the following way:  Consider
\be{anot}
\bbP_{N, \beta}^{A} \lb\  \cdot\  \rb =\bbP_{N , \beta}
\lb \ \cdot\  \big|\Xi_N
 {\, \geq \,}
\rho_\beta
N^3 +A N^2  \rb .
\ee
We claim that  the model {exhibits a sequence} of first order transitions as
$A$ in \eqref{anot} varies. The geometric manifestation of these
transitions is {the} spontaneous creation of macroscopic size
monolayers. In \cite{IS08} we have investigated the creation of the first two
monolayers. The task of the current paper is to provide an asymptotic (as $N\to\infty$)
description of typical surfaces $\Gamma$ under ${\mathbb{P}}_{N, \beta}\left(
\, \cdot\,\bigm|\,\Xi_N
{\,\geq \,}
\rho_\beta   N^{3}+ A N^{2}\right)  $ for {\em any} $A$
fixed.

To study this conditional distribution we rely on Bayes rule,
\begin{equation}
\bbP_{N, \beta}^{A} \lb \Gamma \rb
\,=\,\frac{{\mathbb{P}_{N, \beta}}\left(  \Xi_N\,
\geq
\,\rho_\beta N^{3}+ A N^{2}%
\bigm|\Gamma\right)  {\mathbb{P}_{N, \beta} }\left(  \Gamma\right)  }{\sum
_{\Gamma^{\prime}}{\mathbb{P}_{N , \beta} }\left(  \Xi_N \,
\geq
\, \rho_\beta  N^{3}+ A
N^{2}\bigm|\Gamma^{\prime}\right)  {\mathbb{P}_{N, \beta} }\left(  \Gamma^{\prime
}\right)  }. \label{eq:Bayes}%
\end{equation}
The control over the conditional probabilities ${\mathbb{P}}_{N , \beta} \left(
\cdot\bigm|\Gamma\right)  $ comes from volume order local limit theorems for
independent Bernoulli variables, whereas a-priori probabilities
$\mathbb{P}_{N, \beta} %
\left(  \Gamma\right)  $ are derived from representation of $\Gamma$ in terms
of a gas of non-interacting contours. Models with bulk fields  give an alternative
approximation of interfaces in low temperature  3D Ising model,
and they enjoy certain technical advantages  over the usual SOS model with
weights
$w_{\beta}(\Gamma)$ (see
\eqref{Gammaweight}).
In particular volume order limit results
enable a simple control over the phase of intermediate contours.

\subsection{Heuristics and informal statement of the main result}
Let us describe the heuristics behind the claimed sequence of first order transitions:
 To each surface $\Gamma$ corresponds a signed volume $\alpha (\Gamma )$.
 In terms of the height function $h_{\Gamma}$  which was defined in
\eqref{eq:hFunction},
\begin{equation}
\label{eq:alpha}\alpha(\Gamma)\,=\,\int\int h_{\Gamma}(x,y)dxdy,
\end{equation}
 Main contribution
 to $\alpha (\Gamma )$ comes from large microscopic   facets,
 which are encoded by
 large microscopic level lines $\Gamma_1, \dots \Gamma_\ell$;
 \be{eq:Area-Gamma}
 \alpha (\Gamma ) \approx \sum_1^\ell \sfa \lb \Gamma_i \rb  =
 N^2 \sum_1^\ell \sfa \lb \frac{1}{N}\Gamma_i \rb .
 \ee
 The notions of level lines are defined and discussed in Section~\ref{sec:level-lines}.
 Locally large level lines $\Gamma_1, \dots , \Gamma_\ell$ have structure of
 low temperature Ising polymers, and
 they give rise to a {\em two-dimensional} surface tension $\tau_\beta$.
 As we shall explain below the a-priori probability
 of creating surface with a prescribed volume $a N^2$ is asymptotically given
 by
 \be{eq:st-approx}
 \log\bbP_{N, \beta} \lb \alpha (\Gamma ) = a N^2 \rb \approx -N
{\tau_\beta (a )},
\ee
where $\taub (a )$ is the minimal surface tension of  a compatible collection
of simple curves $\gamma_1, \dots , \gamma_\ell \subset \bbB_1$ with total area
$\sum_i \sfa (\gamma_i )=a$. The notion of compatibility is explained in the
beginning of Section~\ref{sec:var}, which is devoted to a careful analysis of
 the minimization problem we consider here. In fact the only relevant
 compatible collections happen to be ordered stacks
\be{eq:stack}
 \mathring{\gamma_\ell}\subseteq \mathring{\gamma_2}\subseteq \dots
 \subseteq\mathring{\gamma_1}\subseteq \bbB_1 .
\ee
Informally, the
asymptotic relation \eqref{eq:st-approx} is achieved when
rescaled $\frac{1}{N}\Gamma_i$ microscopic level lines stay close
to optimal $\gamma_i$-s.
On the other hand, the presence of $\Gamma$-interface
shifts the expected number of
particles in the bulk by the quantity $\Delta_\beta \alpha (\Gamma )$, where
$\Delta_\beta = (p^s - p^v)$. That is,
\be{eq:exp-shifted}
\bbE_{N, \beta} \lb \Xi_N \big| \Gamma \rb =\rho_\beta N^3 +
\Delta_\beta  \alpha (\Gamma ) .
\ee
Therefore, in view of local limit asymptotics for bulk Bernoulli fields,
\be{eq:bulk-ll}
\log\bbP_{N , \beta}\lb \Xi_N \,
\geq
\, \rho_\beta N^3 +A N^2\big| \alpha (\Gamma )
= a N^2\rb
\approx
-\frac{(A N^2- \Delta_\beta  a N^2)^2}{2 N^3 R_\beta} =
-N{\frac{(\delta_\beta  - a)^2}{2D_\beta}},
\ee
where
\be{eq:quantities}
R_\beta :=   2\lb {p^s (1-p^s ) +p^v (1-p^v )}\rb, \quad
D_\beta :=   \frac{R_\beta }{\Delta^2_\beta }\quad  {\rm and} \quad
\delta_\beta  :=   A/\Delta_\beta  .
\ee
Consequently, the following asymptotic relation should hold:
\be{eq:var-asymp}
\frac{1}{N} \log\bbP_{N, \beta}\lb \Xi  \,
\geq
\, \rho_\beta N^3 +A N^2 \rb \approx
- \min_{a} \lbr \frac{(\delta_\beta  - a)^2}{2D_\beta} +\taub (a )\rbr ,
\ee
and, moreover, if ${a}^*$ is the unique
minimizer for the right hand side of \eqref{eq:var-asymp}, then
the conditional distribution
$\log\bbP_{N, \beta}\lb \, \cdot\, \big| \Xi  \geq  \rho_\beta N^3 +A N^2 \rb$
should concentrate on surfaces
$\Gamma$ which have an optimal volume close to
$a^* N^2$ or, in view of
\eqref{eq:Area-Gamma} and \eqref{eq:st-approx}
whose rescaled large level lines $\frac{1}{N}\Gamma_1, \dots ,\frac{1}{N} \Gamma_\ell$
are  macroscopically close to the optimal stack
$\mathring{\gamma_\ell^*} \subseteq \dots  \subseteq  \mathring{\gamma_1^*}$
which produces
$\tau_\beta (a^* )$.
\begin{bigthm}
\label{thm:A}
Assume that bulk occupation probabilities $p_v$ and $p_s$ satisfy
\eqref{eq:p-beta}. Then there exists $\beta_0 <\infty$ such that for any
$\beta >\beta_0$
there exists a sequence of numbers $0 < A_1 (\beta ) <A_2 (\beta )
< A_3 (\beta )  <\dots$ and a sequence of
areas $ 0 < a_1^- (\beta ) < a_1^+ (\beta ) <a_2^- (\beta )
< a_2^+ (\beta ) <\dots $ which satisfies
properties {\bf A1} and {\bf A2} below:\\
{\bf A1.} For any $A\in [0, A_1)$ the  minimizer in the right hand side
\eqref{eq:var-asymp} (with $\delta_\beta  = A/\Delta_\beta$
as in \eqref{eq:quantities})
is $a^*= 0$ which, in terms of contours in \eqref{eq:stack} corresponds to the empty
stack. For any $\ell = 1, 2, \dots$ and for any $A\in (A_\ell , A_{\ell +1})$ the
unique minimizer $a^*$ in the right hand side of \eqref{eq:var-asymp} satisfies
$a^* \in ( a_{\ell}^- , a_\ell^+ )$, and it corresponds to a unique,
{up to compatible shifts within $\bbB_1$,}
stack
of exactly $\ell$-contours $\mathring{\gamma}_\ell^* \subseteq \dots \subseteq
\mathring{\gamma}^*_1 \subseteq \bbB_1$.  \\
{\bf A2.}  For any $A\in [0, A_1)$ there are no large level lines (no microscopic
facets, that is $\Gamma$ stays predominately flat on zero-height level)
with $\bbP_{N, \beta}^A$-probability tending to one. On the other hand,
for any $\ell = 1, 2, \dots$ and for any $A\in (A_\ell , A_{\ell +1})$, there are
exactly $\ell$ microscopic facets of $\Gamma$ with $\bbP_{N, \beta}^A$-probability
tending to
one and, moreover, the rescaled
 large   level lines $\frac{1}{N}\Gamma_1 , \dots , \frac{1}{N}\Gamma_\ell $ concentrate
 in Hausdorff distance $\sfd_{\sfH}$  near the optimal stack
 $\lbr \gamma_1^* , \dots , \gamma_\ell^*\rbr $, as described
 in part {\bf A1} of the
 Theorem, in the
 following sense: For any $\epsilon > 0$ ,
 \be{eq:Hausd-stack}
 {\lim_{N\to\infty}\bbP^A_{N, \beta} \lb \sum_{i=1}^{\ell -1}
 \sfd_{\sfH} \lb \frac{1}{N}\Gamma_i , \gamma^*_i\rb +
 \min_{x : x+\gamma^*_\ell \subset \bbB_1}
 \sfd_{\sfH} \lb \frac{1}{N}\Gamma_\ell , x+ \gamma^*_\ell\rb
 \geq \epsilon \rb = 0 .}
 \ee
\end{bigthm}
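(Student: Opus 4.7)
The plan is to prove Theorem~\ref{thm:A} by combining a careful analysis of the deterministic variational problem on the right-hand side of (\ref{eq:var-asymp}) with matching sharp upper and lower bounds for the conditional probabilities in (\ref{eq:Bayes}), along the lines already laid out in the heuristics (\ref{eq:st-approx})--(\ref{eq:bulk-ll}).

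First I would carry out a detailed study of the pure variational problem. Set $F_A(a) := (\delta_\beta - a)^2/(2D_\beta) + \tau_\beta(a)$. Using the Wulff description of $\tau_\beta$ from Section~\ref{sec:var}, only ordered stacks (\ref{eq:stack}) contribute, so $\tau_\beta(a) = \min_\ell \tau_\beta^{(\ell)}(a)$, where $\tau_\beta^{(\ell)}$ is the minimal cost over $\ell$-stacks with total area $a$. The key qualitative fact is that each $\tau_\beta^{(\ell)}$, as a function of $a$, is piecewise analytic and fails to be convex in a controlled way: a Lagrange/rescaling argument identifies the optimal shapes as explicit Wulff pieces, possibly truncated by the next contour or by $\partial \bbB_1$. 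Combined with strict convexity of the quadratic term of $F_A$ this implies that, for each fixed $\ell$, the minimizer of $F_A$ restricted to $\ell$-stacks is unique modulo translations of the innermost curve $\gamma_\ell^*$ inside $\gamma_{\ell-1}^*$, which is precisely the shift freedom appearing in (\ref{eq:Hausd-stack}). The thresholds $A_\ell$ and the bracketing areas $[a_\ell^-, a_\ell^+]$ are defined by equality of global minima between consecutive classes $\ell$ and $\ell+1$; that this gives an infinite, strictly increasing sequence of first-order transitions for all $\beta$ sufficiently large is checked by an explicit low-temperature expansion of $\tau_\beta$. This establishes \textbf{A1}.

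Turning to the probabilistic side, I would work from (\ref{eq:Bayes}). Conditionally on $\Gamma$, the total $\Xi_N$ is a sum of independent Bernoullis with mean $\rho_\beta N^3 + \Delta_\beta \alpha(\Gamma)$ and variance of order $N^3$, so the sharp Cram\'er/Bahadur--Rao asymptotics applied uniformly over $|\alpha(\Gamma)| \leq C N^2$ give exactly (\ref{eq:bulk-ll}). The delicate ingredient is the a-priori estimate (\ref{eq:st-approx}), namely that $\bbP_{N,\beta}(\alpha(\Gamma) \approx a N^2) = \exp(-N\tau_\beta(a) + \smof{N})$. The upper bound comes from the contour representation of $w_\beta(\Gamma)$ in (\ref{eq:GammaWeight-SOS}) together with a cluster expansion that suppresses small contours, reducing matters to macroscopic level lines $\Gamma_1, \dots, \Gamma_\ell$ treated as low-temperature Ising polymers; the planar Wulff-type large-deviation theory of \cite{IST15} then gives the required exponential decay. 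The lower bound is obtained by restricting the partition function to configurations whose rescaled large level lines remain uniformly $\epsilon$-close in Hausdorff distance to the minimizing stack from \textbf{A1}, and using polymer entropy estimates. Combining the two yields (\ref{eq:var-asymp}).

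Part \textbf{A2} then follows by a comparison of the matching bounds in (\ref{eq:Bayes}). Fix $A \in (A_\ell, A_{\ell+1})$ and let $U_\epsilon$ denote the complement of the $\epsilon$-neighbourhood (in the sense of the expression on the left-hand side of (\ref{eq:Hausd-stack})) of the family of optimal stacks. By uniqueness in \textbf{A1} and compactness there exists $\eta(\epsilon) > 0$ such that for any $\Gamma \in U_\epsilon$ either $\Gamma$ carries the wrong number of macroscopic level lines, or the stack functional evaluated at its rescaled large level lines exceeds the global minimum by at least $\eta(\epsilon)$; either way the contribution of $\Gamma \in U_\epsilon$ to the numerator of (\ref{eq:Bayes}) is exponentially smaller than the denominator, which gives (\ref{eq:Hausd-stack}). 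The main obstacle, and the reason this paper had to wait for the techniques of \cite{IST15}, is the treatment of \emph{touching} level lines in the stack: once two adjacent optimal curves $\gamma_i^*, \gamma_{i+1}^*$ coincide on a portion of their boundaries --- the generic situation for $\ell > k_c$ --- the polymer representation becomes genuinely interacting, and the sharp large-deviation bound (\ref{eq:st-approx}) with the correct surface tension $\tau_\beta$ requires the delicate renewal/Ornstein--Zernike analysis of nested contours under local entropic repulsion developed in \cite{IST15}. This is where I expect the bulk of the technical work to lie.
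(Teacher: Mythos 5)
Your overall plan mirrors the paper's structure closely: a deterministic analysis of the constrained isoperimetric problem for \textbf{A1}, and, for \textbf{A2}, the Bayes decomposition \eqref{eq:Bayes}, local limit asymptotics for the bulk Bernoulli fields, an a-priori reduction to a polymer gas of large level lines, the DKS skeleton calculus for upper bounds, a tube construction for lower bounds, and a quantitative stability estimate for the variational problem to convert matching exponents into Hausdorff concentration. All of these ingredients appear in the paper in the same roles, and your identification of the touching-level-line regime as the crux is correct.

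There is, however, a genuine gap at exactly the point you flag as hardest. You write that the touching level lines are handled by ``the delicate renewal/Ornstein--Zernike analysis of nested contours under local entropic repulsion developed in \cite{IST15}.'' But \cite{IST15} supplies decoupling estimates only for a \emph{pair} of interacting contours, whereas here a near-optimal configuration is a stack of arbitrarily many almost-coinciding level lines whose cluster-expansion interaction is a genuine many-body effect; \cite{IST15} alone does not close the argument. The paper bridges this with a new combinatorial reduction (Subsections~\ref{sub:chromatic-bound}--\ref{sub:kappa}): it encodes the compatible collection $\Gamma$ by a graph, bounds the \emph{edge chromatic number} $\kappa_\calG$ via Theorem~\ref{coloring simplex} together with the a-priori cardinality control of Proposition~\ref{thm:bad-stacks} (giving $\kappa_\calG\lesssim\beta$), and then applies a generalized H\"older inequality \eqref{eq:Gen-Holder} to split the cluster-sharing term into $\kappa_\calG$ families of pairwise non-adjacent interactions. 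Only after this does the two-contour estimate (Theorem~\ref{thm:entropic}, via Proposition~\ref{prop:tubebound}) apply, and one must re-verify that entropic repulsion beats the $\kappa_\beta$-amplified attraction under hypothesis \eqref{eq:chi-cond} with $\chi>1/2$; this is precisely why \eqref{eq:p-beta} and $\beta$ large enter. Without this reduction the a-priori estimate \eqref{eq:st-approx} with the correct surface tension of a multi-layer stack is not established. A secondary omission in your \textbf{A1} discussion is the dichotomy between type-1 and type-2 stacks and the threshold $k^*$ of Theorem~\ref{thm:k-star-slopes}, which is what makes the ``up to compatible shifts'' qualifier meaningful only in the regime $\ell\le k^*$ (when $w>2\sigma_\beta$) and trivial otherwise; your sketch does not isolate this structure, and a bare ``low-temperature expansion of $\tau_\beta$'' is not the mechanism the paper uses --- the variational analysis in Section~\ref{sec:var} is carried out for an arbitrary lattice-symmetric norm.
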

\subsection{Structure of the paper}
Section~\ref{sec:var} is devoted to a careful analysis of the multi-layer
minimization problem in the right hand side
\eqref{eq:var-asymp}. Our results actually go beyond {\bf A1} in
Theorem~\ref{thm:A}, namely we give a rather complete description of optimal
stacks $\lbr \gamma_1^*, \dots , \gamma_\ell^*\rbr $ in terms of Wulff shapes and
Wulff plaquettes associated to surface tension $\taub$, and in particular
we explain
geometry behind the claimed infinite sequence of first order transitions. \\
The notions of microscopic contours and level lines are defined in
Section~\ref{sec:level-lines}.  \\
The surface tension $\taub$ is defined in the very beginning of
Section~\ref{sec:Proofs}. \\
The proof part {\bf A2} of
Theorem~A, or more precisely the proof of the corresponding statement for
the reduced model of large contours, Theorem~\ref{thm:B} in the end of
Section~\ref{sec:level-lines}, is relegated to Section~\ref{sec:Proofs}.

{
Throughout the paper we rely on techniques and ideas introduced and developed
in \cite{DKS} and \cite{IST15}. Whenever possible we only sketch proofs which
follow closely the relevant parts
of these papers.
}

\subsection{Some notation}
Let us introduce the following convenient notation: Given two indexed families
of
numbers $\left\{  a_{\alpha}\right\}  $ and $\left\{  b_{\alpha}\right\}  $ we
shall say that $a_{\alpha}\lesssim b_{\alpha}$ \emph{uniformly} in $\alpha$ if
there exists $C$ such that $a_{\alpha}\leq Cb_{\alpha}$ for all indices
$\alpha$. Furthermore, we shall say that $a_{\alpha}\cong b_{\alpha}$ if both
$a_{\alpha}\lesssim b_{\alpha}$ and $b_{\alpha}\lesssim a_{\alpha}$ hold.
Finally, the family $\left\{  a_{\alpha,N}\right\}  $ is ${\small o}_{N}(1)$
if $\sup_{\alpha}\left\vert a_{\alpha,N}\right\vert $ tends to zero as $N$
tends to infinity.

\section{The variational problem}
\label{sec:var}
The variational problems we shall deal with are constrained isoperimetric type
problems for curves lying inside the  box $\bbB_1 = [-1,1]^2\subset \bbR^2$.

Let $\taub$ be a  norm on $\bbR^2$ which
possesses all the lattice symmetries of $\bbZ^2$. For a closed
{piecewise smooth} Jordan curves (called
 loops below)
$\gamma\subset \bbB_1$
 we define $\sfa (\gamma )$ to be the area of its interior
 $\overset{\circ}{\gamma}$ and $\taub (\gamma )$
 to be
 its surface tension,
 \[
  \taub (\gamma ) = \int_\gamma \taub (\frn_s )\dd s .
 \]
A finite family $\calL = \lbr \gamma_1 , \gamma_2, \dots, \gamma_n\rbr\subset \bbB_1$ of
loops  is said to be compatible if
\[
 \forall \, i\neq j\ {\rm either}\, \overset{\circ}{\gamma_i}\cap \overset{\circ}{\gamma_j} =\emptyset
 \ {\rm or}\ \overset{\circ}{\gamma_i}\subseteq \overset{\circ}{\gamma_j}\ {\rm or}\
 \overset{\circ}{\gamma_j}\subseteq \overset{\circ}{\gamma_i} .
\]
Thus compatible families have a structure of
finite number of disjoint stacks of loops,
with loops within each stack being ordered by inclusion.
Given $\sfD_\beta  >0$ consider
the following family, indexed by $\delta >0$, of  minimization problems:
\be{eq:VP-delta}
\tag{${\rm VP}_\delta$}
\min_{\calL} \calE_\beta \lb \calL ~|\delta\rb
:=
\min_{\calL}
\lbr \frac{\lb\delta -\sfa (\calL ) \rb^2}{2 \sfD_\beta } + \taub (\calL )
\rbr ,
\ee
where

\[
\sfa\left(  \mathcal{L}\right)  =\sum_{\gamma\in\mathcal{L}}
\sfa\left(  \gamma\right)\quad {\rm and}\quad \taub (\calL ) = \sum_{\gamma\in \calL}\taub (\gamma )
\]
\subsection{Rescaling of \eqref{eq:VP-delta}.} Let $\sfe$ be a lattice direction.
Set
\be{eq:scaling}
v= \frac{\delta}{\taub (\sfe )\sfD_\beta}, \
\sigma_\beta  = \sfD_\beta\taub (\sfe )\ {\rm  and}\  \tau (\cdot ) =
\frac{\taub (\cdot )}{\taub (\sfe )} .
\ee
Since
\[
 {
 \calE_\beta \lb \calL ~|\delta\rb =
 }
 \frac{\lb\delta -\sfa (\calL ) \rb^2}{2\sfD_\beta} + \taub (\calL )
 =
\frac{\delta^2}{2\sfD_\beta} +
\taub (\sfe )\lbr - v \sfa (\calL ) + \tau (\calL ) +
\frac{\sfa (\calL )^2 }{2\sigma_\beta } \rbr ,
\
\]
we can reformulate the family of variational problems \eqref{eq:VP-delta} as follows:
 for $a \geq  0$ define
 \be{eq:Xi-a}
 \tau (a ) = \min\lbr \tau (\calL )\, :\, \sfa (\calL ) = a\rbr .
\ee
Then study
\be{eq:VP-v}
\tag{${\rm VP}_v$  }
\min_{a\geq 0}\lbr -v a + \tau  ( a) + \frac{a^2}{2\sigma_\beta }\rbr  .
\ee
The problem \eqref{eq:VP-v} has a clear geometric interpretation:
we want to find $a =a (v ) \geq 0$
such that the
straight line
with slope $v$ is the support line at $a (v) $ to the graph
$a\mapsto \tau  (a ) + \frac{a^2}{2\sigma_\beta }$ on $[0, \infty)$.

\subsection{Wulff shapes, Wulff plaquettes and optimal stacks}
By construction $\tau$ inherits lattice symmetries of $\mathbb{Z}^{d}$
and $\tau\left(  \sfe \right)  =1$. In this case,
{the Wulff shape }%
\[
\sfW \triangleq\partial\left\{  x:x\cdot\mathfrak{n}\leq\tau(\mathfrak{n}%
)\ \forall~\mathfrak{n}\in\mathbb{S}^{1}\right\}
\]
\textbf{has} the following properties: Its radius (half of {its}
projection on the $x$ or $y$ axis) equals to $1$. Its area $\sfa (\sfW )$
satisfies
\[
\sfa (\sfW ) \equiv w=\frac{1}{2}\tau (\sfW ) ,
\]
 For any $r>0$ the radius, the area and
the surface tension of the scaled curve $r\sfW$ equal to $r,\ r^{2}w$ and
$2rw$ respectively. The curves $r\sfW$ are (modulo shifts) unique minimizers
of $\tau (\gamma )$
restricted to the loops $\gamma\ $with area $\sfa (\gamma)=r^{2}w:$
\begin{equation}
\label{eq:Wr}
2rw=\tau (  r\sfW )  =\min_{\gamma:\sfa (\gamma)=r^{2}w} \tau (\gamma ) .
\end{equation}
Since $\tau (\sfe )=1$, the maximal radius and, accordingly, the maximal area of the
rescaled Wulff shape which could be inscribed into the  box $\bbB_1$ are
precisely $1$
 and ${w}$. For $b\in[0,{w}]$ let $\sfW_b$ be the Wulff shape of area $b$.
 By convention, $\sfW_b$ is centered at
 the origin.
 Its radius $r_b$ and its surface tension $ \tau (\sfW_b )$ are given by
 \be{eq:Wb}
 r_b = \sqrt{\frac{b}{w}}\quad {\rm and}\quad \tau (\sfW_b )  = 2r_b w =  2\sqrt{b w} .
 \ee
 For $b\in ({w}, 4]$ the optimal (minimal $\tau (\gamma )$)
 loop $\gamma\subseteq \bbB_1$ with
 $\sfa (\gamma ) = b$  is what we call the
Wulff plaquette $\sfP_b$. It is defined as follows. One takes four Wulff shapes of
radius $r\leq 1$ and puts them in four corners of $\bbB_1 ,$ so
that each touches two corresponding sides of $\bbB_1 ;$ then, one takes the convex
envelope of the union of these four Wulff shapes. We will call such Wulff
plaquette as having the radius $r$. It contains four segments of
length $2(1-r),$ and so its surface tension  is $8\left(  1-r\right)  +2wr.$ Its
area is $4-\left(  4-w\right)  r^{2}.$ In this way, the Wulff plaquette $\sfP_b$
of area $b\in [{w}, 4]$ has
the radius $r_b$ and  surface tension $ \tau (\sfP_b )$ given by
\be{eq:Pb}
 r_b = \sqrt{\frac{4-b}{4-w}}\quad {\rm and}\quad \tau (\sfP_b )  =
 8 - 2r_b (4- w) =
 8- 2\sqrt{(4- w)(4-b )} .
 \ee
 \begin{remark}
  By convention $\sfW_{w} = \sfP_{w}$.
  Also note that both in the case of Wulff shapes
  and Wulff plaquettes,
  \be{eq:dtau-b}
 \frac{\dd}{\dd b}\tau \lb \sfW_b \rb = \frac{1}{r_b}\quad {\rm and}\quad
 \frac{\dd}{\dd b}\tau \lb \sfP_b \rb  = \frac{1}{r_b},
 \ee
 for any $b\in (0, w )$ and, respectively, for any $b\in (w , 4)$.
 \end{remark}
\smallskip

\noindent\textbf{Definition.} For $b\in [0,4]$ define the optimal shape
\be{eq:S-shape}
\sfS_b = \sfW_b\1_{b\in[0,{w})} + \sfP_b \1_{b\in[{w}, 4]} .
\ee

Let now $\mathcal{L}$ be a family of compatible
loops.
{
For any $x\in \bbB_1$ let $n_{\calL} (x )$ be the number of
loops $\gamma\in\calL$ such that $x\in \mathring{\gamma}$.
}
The areas $b_\ell = |x\in \bbB_1 : n_{\mathcal{L}} (x)\geq \ell |$ form a
non-increasing sequence. Therefore $\calL^* = \lbr \sfS_{b_1}, \sfS_{b_2} , \dots\rbr  $
is also a compatible family. Obviously $\sfa (\calL^* )= \sfa (\calL )$, but
$\tau (\calL^* )\leq \tau (\calL )$. Consequently, we can restrict attention only to
compatible families which contain exactly one stack of optimal shapes $\sfS_{b_\ell}$.

Furthermore, \eqref{eq:dtau-b} implies that for each $\ell\in \bbN$ optimal $\ell$-stacks
could be only of two types:
Let $a\in \bbR^+$.

\noindent
\textbf{Definition} (Stacks $\calL_\ell^1 (a )$ of type 1).
These contain $\ell -1$
identical Wulff plaquettes and a Wulff shape, all  of the same radius
$r = r^{1, \ell } (a )$,
which should  be recovered from
\be{eq:a-L1}
a  = (\ell -1)(4- (4-w )r^2 ) + wr^2 = 4(\ell -1 ) - r^2 \lb \ell (4- w) -4\rb .
\ee
That is, if $\ell (4-w ) \neq 4$, then
\be{eq:rad-1a}
r^{1 , \ell}  (a) = \sqrt{\frac{4(\ell - 1 ) -a  }{4 (\ell -1 ) -  w }}.
\ee
\begin{remark}
\label{rem:deg}
Of course, if $  \frac{4}{4-w} \in \bbN$, then \eqref{eq:a-L1} does not determine $r$
for $\ell^* = \frac{4}{4-w}$.
In other words in this case for any $r\in [0, 1]$ the stack   of $(\ell^*-1)$
Wulff plaquettes of radius $r$ and the Wulff shape of the very same radius $r$ on the top
of them has area $4( \ell^*-1 )$ and surface tension $8(\ell^*-1)$. This
 introduces a certain degeneracy in the problem, but, upon inspection,
 the inconvenience happens to be of a purely  notational
nature, and in the sequel we shall ignore this case, and  assume that the
surface tension $\tau$
satisfies
\be{eq:non-dg}
\ell^* := \frac{4}{4-w }\not\in\bbN .
\ee
\end{remark}
We proceed to work under assumption \eqref{eq:non-dg}.
The range $ Range(\calL_\ell^1 )$  of areas $a$,  for which $\ell$-stacks of
type 1 are defined is:
\be{eq:range-a1}
 Range(\calL_\ell^1 ) =
\begin{cases}
 \ [4(\ell -1 ), \ell w], \quad &{\rm if}\ \ell < \ell^*,\\
\  [\ell w, 4(\ell -1 )], \quad &{\rm if}\ \ell >  \ell^*.
 \end{cases}
\ee
In either of the two cases above the surface tension
\be{eq:st-L1}
 \tau \lb \calL_\ell^1 (a )\rb = 8 (\ell -1) +
2 {\rm sign}(\ell^* -\ell)
\sqrt{(4(\ell -1)-a )(4 (\ell -1 ) -w )  } .
\ee
In view of \eqref{eq:rad-1a} and \eqref{eq:non-dg}
\be{eq:e1-deriv}
\frac{\dd }{\dd a} \tau \lb \calL_\ell^1 (a )\rb  =
\sqrt{\frac{4 (\ell -1 ) - w}{4(\ell -1)-a }}
= \frac{1}{r^{1 , \ell}  (a)}.
\ee
\smallskip

\noindent
\textbf{Definition} (Stacks $\calL_\ell^2 (a )$ of type 2). These contain
$\ell$ identical Wulff plaquettes of radius
\be{eq:rad-2a}
r^{2 , \ell}  (a) = \sqrt{\frac{4\ell -a}{(4 - w ) \ell}},
\ee
as it follows from $a= \ell (4- (4-w )r^2 )$.
The range $ Range(\calL_\ell^2 )$  of areas $a$,  for which stacks of
type 2 are defined (for a given value of $\ell\in\bbN$ ),  is:
\be{eq:range-a2}
 Range(\calL_\ell^2 ) =  [\ell w , 4\ell]
\ee
Substituting the value of the radius \eqref{eq:rad-2a} into \eqref{eq:Pb} we infer
that the surface tension of a stack of type 2 equals to:
\be{eq:st-L2}
 \tau \lb \calL_\ell^2 (a ) \rb  =
 8\ell - 2 \sqrt{ (4\ell -a ) (4 - w ) \ell }\quad {\rm and}\quad
 \frac{\dd }{\dd a}   \tau \lb \calL_\ell^2 (a )\rb
= \frac{1}{r^{2 , \ell}  (a)}.
\ee
Note that by definition
\be{eq:vr-stacks}
\calL_\ell^1 (4 (\ell -1) ) = \calL^2_{\ell - 1} (4(\ell -1) )\ \text{and}\
\calL_\ell^1 ( \ell w  ) = \calL_\ell^2 ( \ell w ).
\ee
Also for notational convenience we set $\calL^2_0 (0 ) = \emptyset$.
\smallskip

\noindent
Set $\tau_\ell (a ) = \min\lbr \tau\lb \calL_\ell^1 (a ) \rb  ,\tau \lb  \calL_\ell^2 (a )\rb\rbr$, where we
define
\be{eq:InfRange}
\tau \lb \calL_\ell^i (a )\rb  = \infty\quad \text{if $a\not\in Range (\calL_\ell^i )$}
\ee
We can rewrite \eqref{eq:VP-v} as
\be{eq:VP-vl}
\min_{a \geq 0 , {\ell\in\bbN} }\lbr -va +\tau_\ell (a ) + \frac{a^2}{2\sigma_\beta }\rbr =
\min_{a \geq 0 , \ell\in{\bbN}, i=1,2}\lbr -va +
 \calG_\ell^i (a )  \rbr,
\ee
where we put
 \be{eq:G-i}
 \calG_\ell^i (a ) = \tau\lb \calL_\ell^i (a )\rb + \frac{a^2}{2\sigma_\beta},\quad i=1,2.
 \ee
\subsection{Irrelevance of $\calL_\ell^1$-stacks for $\ell >\ell^*$}
\label{sub:irrelevance}
For $\ell >  \ell^*$,
\[
Range (\calL_\ell^1 ) = [\ell w, 4 (\ell -1)]
\subset {Range (\calL_\ell^2 )}  = [\ell w, 4 \ell] .
\]
By the second of \eqref{eq:vr-stacks} the values of $\calL_\ell^1$ and $\calL_\ell^2$ coincide
at the left end point $a = \ell w$. On the other hand, $r_\ell^1 (a ) \leq r_\ell^2 (a )$
for any $a \in [\ell w, 4( \ell -1 )]$. Hence,  \eqref{eq:e1-deriv} and
\eqref{eq:st-L2} imply that $\tau\lb \calL_\ell^1 (a )\rb \geq \tau\lb \calL^2_\ell (a )\rb$
whenever $\ell >\ell^*$.
\subsection{Variational problem for stacks of type $2$.}
We start solving the problem \eqref{eq:VP-vl}, by considering the simpler one:
\be{eq:VP-vl-2}
\min_{a \in \cup_{\ell \ge 0} [w\ell,4\ell] , \ell\in\bbN_0}\lbr -va + \calG_\ell^2 (a )\rbr
\df \min_{a \in \cup_{\ell \ge 0} [w\ell,4\ell] , \ell\in\bbN_0} F_v (\ell , a ) ,
\ee
where we have defined (see \eqref{eq:st-L2}).
\be{eq:Fv}
F_v (\ell , a) = -v a + \frac{a^2}{2\sigma_\beta} + 8\ell - 2 \sqrt{ (4\ell -a ) (4 - w ) \ell } .
\ee
Recall \eqref{eq:InfRange}  that we set $\calG_\ell^2 (a ) = \infty$ whenever
$a\not\in Range (\calL_\ell^2 )$, as described in
\eqref{eq:range-a2}. {In this way the functions  $\calG_\ell^2$ are defined on $\bbR$;
each one has a support line at any slope $v$.}
 In the variational problem \eqref{eq:VP-vl-2} we are looking for the lowest such support line, which
is precisely the support line to the graph of $\calG^2 = \min_{\ell}\calG_\ell^2$.
For a
generic slope $v$ there is exactly one value $\ell (v )$ for which the minimum is realized.
However, for certain critical values $v^{\ast}$ of the slope $v$
it might happen that the minimal support line touches the graphs of $\calG_\ell^2$ for
several different $\ell$-s.
As the following theorem shows, at every such  critical value $v^{\ast}$, the index  $\ell = \ell (v)$ of
the optimal stack $ \mathcal{G}_{\ell}^{2}$ jumps exactly by one unit up, that is
$\ell\left(  v^{\ast}+\varepsilon\right)  =\ell\left(  v^{\ast}-\varepsilon
\right)  +1$ for $\varepsilon>0$ small. Furthermore, these transition are of the first order,
both in terms of the radii and the areas
of optimal stacks.

\begin{theorem}
 \label{thm:VP-vl-2}
There
 exists an
 increasing sequence of critical slopes
 $0= v_0^*<v_1^* < v_2^* < \dots$ and an
 increasing sequence of the area values
 \[
  0 = a_0^+ <  a_1^- < a_1^+ < a_2^- <a_1^+ < a_2^- < \dots
 \]
such that $a_\ell^\pm\in Range \lb \calL^2_\ell\rb = [\ell w, 4\ell]$
for every $\ell\in \bbN$, and:

\noindent
1. For $v\in [0, v_1^* )$ the empty stack $\calL^2_0 (0 )$ is the unique solution to
\eqref{eq:VP-vl-2}.

\noindent
2. For each $\ell\in \bbN$
such that $v^*_\ell < 1+ \frac{\ell w}{\sigma_\beta}$, the minimum in  \eqref{eq:VP-vl-2} is attained,  for all
$v\in (v_\ell^* , 1+ \frac{\ell w}{\sigma_\beta})$,   at
$a=a_\ell^- = \ell w$.
The corresponding stack is
$\calL^2_\ell (\ell w )$.

\noindent
3. For the remaining values of $v\in (v_\ell^*\vee ( 1+ \frac{\ell w}{\sigma_\beta}) , v_{\ell +1}^* )$,
the picture is the following: for
each $\ell\in \bbN$ there exists a continuous increasing bijection
\[
a_\ell : [v_\ell^*\vee ( 1+ \frac{\ell w}{\sigma_\beta}) , v_{\ell +1}^*]
\mapsto [a_\ell^- , a_\ell^+],
\]
such that
 for each $v\in (v_\ell^*\vee ( 1+ \frac{\ell w}{\sigma_\beta}) , v_{\ell +1}^* )$
 the stack
  $\calL^2_\ell \lb a_\ell (v )\rb $ corresponds to the unique solution to \eqref{eq:VP-vl-2}.

\noindent
4. At
critical slopes $v_1^*, v_2^*, \dots $ the transitions happen. There is  a coexistence:
\be{eq:vl-coex}
\frac{(v- a_{\ell-1 }^+)^2}{2\sigma_\beta} + \tau  \lb \calL^2_{\ell -1 } (a_{\ell -1}^+) \rb
=
\frac{(v- a_{\ell }^-)^2}{2\sigma_\beta} + \tau  \lb \calL^2_{\ell  } (a_{\ell }^-) \rb .
\ee
Also, the radii of plaquettes of optimal stacks at coexistence points are increasing:
Set $b_\ell^\pm = a_\ell^\pm /\ell$. Then, $b_{\ell -1}^+ > b_\ell^-$, and hence
\be{eq:rad-l}
r_{b_{\ell -1}^+} <  r_{{b_\ell^-}}
\ee
for every $\ell\in \bbN$.
\end{theorem}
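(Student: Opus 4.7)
The plan is to view \eqref{eq:VP-vl-2} as the search for support lines of slope $v$ to the lower envelope $\bar\calG^2(a) := \min_{\ell\geq 0}\calG_\ell^2(a)$ of the family $\{\calG_\ell^2\}$, where each $\calG_\ell^2$ is extended by $+\infty$ outside its domain $[\ell w, 4\ell]$. Parts 1--4 of the theorem will then be read off from an explicit description of the faces and vertices of the convex hull of $\bar\calG^2$.

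\emph{Single-fiber analysis.} I would first differentiate \eqref{eq:st-L2} to obtain
\begin{equation*}
\frac{d}{da}\calG_\ell^2(a) \,=\, \frac{1}{r^{2,\ell}(a)} + \frac{a}{\sigma_\beta},
\end{equation*}
which is strictly increasing on $[\ell w, 4\ell]$ since $r^{2,\ell}(\cdot)$ is strictly decreasing by \eqref{eq:rad-2a}. Hence each $\calG_\ell^2$ is strictly convex there, with left derivative $1+\ell w/\sigma_\beta$ at $a=\ell w$ (using $r^{2,\ell}(\ell w)=1$) and derivative diverging to $+\infty$ as $a\uparrow 4\ell$. Consequently the minimizer of $F_v(\ell,\cdot)$ on fiber $\ell$ is unique: the boundary value $a = \ell w$ when $v \leq 1+\ell w/\sigma_\beta$, and otherwise the smooth interior solution $a_\ell(v)$ of $v = 1/r^{2,\ell}(a_\ell(v)) + a_\ell(v)/\sigma_\beta$. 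This already furnishes the piecewise description in Part~3 and identifies $a_\ell^- \geq \ell w$.

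\emph{Envelope and critical slopes.} Next I set $H_\ell(v) := \min_a F_v(\ell,a)$; each $H_\ell$ is the negative Legendre transform of $\calG_\ell^2$, hence concave and non-increasing in $v$, with $-H_\ell'(v)$ equal to the optimal area at slope $v$. Defining $v_\ell^*$ as the successive switching points of $v\mapsto\min_\ell H_\ell(v)$ and $a_\ell^\pm$ as the left/right areas in fiber $\ell$ where it is optimal, I will then prove two qualitative features: (i) the optimal index $\ell(v)$ is non-decreasing in $v$ and jumps by exactly one at each critical slope; (ii) the sequence $v_\ell^*$ is strictly increasing with $v_\ell^*\to\infty$. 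Item (ii) follows from $-H_\ell'(v)\to 4\ell^-$ as $v\to\infty$, so for any fixed $v$ only finitely many fibers are competitive and each fiber does eventually become optimal. Item (i) requires comparing, at a candidate coexistence of fibers $\ell-1$ and $\ell$, the common tangent line against every other $\calG_k^2$ via the explicit formulas \eqref{eq:rad-2a}--\eqref{eq:st-L2}.

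\emph{First-order transitions and radii monotonicity.} Strict convexity of each $\calG_\ell^2$ forces the two minimizers $a_{\ell-1}^+$ and $a_\ell^-$ at a coexistence slope $v_\ell^*$ to be distinct, yielding both the identity \eqref{eq:vl-coex} and the strict separation $a_\ell^- > a_{\ell-1}^+$. When both minimizers are interior, the common first-order condition
\begin{equation*}
v_\ell^* \,=\, \frac{1}{r_{b_{\ell-1}^+}} + \frac{a_{\ell-1}^+}{\sigma_\beta} \,=\, \frac{1}{r_{b_\ell^-}} + \frac{a_\ell^-}{\sigma_\beta}
\end{equation*}
combined with $a_\ell^- > a_{\ell-1}^+$ yields directly $r_{b_{\ell-1}^+} < r_{b_\ell^-}$, proving \eqref{eq:rad-l}; the boundary case $a_\ell^- = \ell w$ (so $r_{b_\ell^-} = 1$) is handled separately using that $b_{\ell-1}^+ > w$.

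\emph{Main obstacle.} The most delicate point is the non-skipping property (i): at each coexistence slope, the common tangent line to $\calG_{\ell-1}^2$ and $\calG_\ell^2$ must lie strictly below every other $\calG_k^2$. The quadratic penalty $a^2/(2\sigma_\beta)$ is essential here---without it the graphs $\calG_\ell^2$ would share common tangents of slope $1$ at their left endpoints in a degenerate way, and it is precisely this quadratic curvature that forces the transitions to happen one layer at a time and organizes them into a strictly increasing sequence $v_1^* < v_2^* < \cdots$.
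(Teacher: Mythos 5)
Your setup is correct and largely parallels the paper's: you identify the fiberwise strict convexity of $\calG_\ell^2$ (the paper's Fact~1), the boundary-versus-interior dichotomy for the optimal $a_\ell(v)$ governed by $v \lessgtr 1+\ell w/\sigma_\beta$, the Legendre-transform picture, and the radii monotonicity once the non-skipping is known. However, you have flagged the non-skipping property --- that the optimal index $\ell(v)$ jumps by exactly one at each coexistence slope --- as ``the most delicate point'' and then left it unproved, proposing to settle it ``by comparing \dots\ via the explicit formulas.'' That is precisely what the proof must actually supply, and the paper goes out of its way to avoid such computations (it says explicitly that the crux is to deduce non-skipping ``without resorting to explicit and painful computations''). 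So as written your argument has a genuine gap at the one step the theorem actually hinges on.

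The insight you are missing is the paper's Fact~2: the function $F_v(\ell,a) = -va + a^2/(2\sigma_\beta) + 8\ell - 2\sqrt{(4\ell-a)(4-w)\ell}$ is \emph{jointly} strictly convex in $(\ell,a)$ as a function of continuous variables on the sector $\bbD_w = \{0\leq \ell w \leq a \leq 4\ell\}$, because $-\sqrt{\ell(4\ell - a)}$ is strictly convex there. This two-dimensional convexity makes the non-skipping property essentially free: if $F_v(\ell_1,a_1)=F_v(\ell_2,a_2)=\min F_v$ with $\ell_2 > \ell_1+1$, then evaluating $F_v$ at the convex combination landing on the integer layer $\ell_1+1$ gives a strictly smaller value (equation \eqref{eq:s-conv} in the paper), a contradiction. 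The paper then runs an induction along the critical slopes using this observation, together with the monotonicity facts \eqref{eq:am-bm}. Without lifting $\ell$ to a continuous variable and exploiting joint convexity, I do not see how your envelope comparison of the $\calG_k^2$ curves would close without exactly the sort of case-by-case computation the paper was designed to circumvent --- and it is not clear in advance that such a computation would terminate, since for $\ell>\ell^*$ the domains $[\ell w, 4\ell]$ overlap and the ordering of tangent lines among several fibers is not self-evident. You should also note that the strict separation $a_\ell^- > a_{\ell-1}^+$ at a coexistence slope does not follow merely from fiberwise strict convexity; the paper derives it from the monotonicity $a_m(v) > a_\ell(v)$ for $m>\ell$ (equation \eqref{eq:am-bm}), which again uses the explicit form of the first-order condition.
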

We shall prove Theorem~\ref{thm:VP-vl-2} under additional non-degeneracy assumption
\eqref{eq:non-dg}. However, the proof could be easily modified in order to accommodate the
degenerate case as well.

The fact that the problem should exhibit   first order transitions
could be easily understood from
\eqref{eq:st-L2}. The crux  of the proof below  is
to
show that when $v$ increases, the number $\ell=\ell(v)$ of layers of the corresponding optimal stack
$\calL^2_\ell \lb a_\ell (v )\rb $ either
stays the same or increases by one, and, above all, to deduce
all the results without resorting to explicit and  painful computations.

\begin{proof}[Proof of Theorem~\ref{thm:VP-vl-2}]
Let us start with the following  two facts:

\paragraph{\bf Fact~1.} For every $\ell\geq 1$, the function
$\left\{  \mathcal{G}_{\ell}^{2}\left(  a\right)  \right\}  $ is strictly  convex.
 Let $a_\ell = a_\ell (v)$ be the point where a line with the slope
$v$ supports its graph.
If
\be{eq:v-l-slope}
v  \leq
\frac{\dd^+}{\dd a}\Big|_{a = \ell w}
\left\{  \mathcal{G}_{\ell}^{2}\left(  a\right)  \right\}
 \stackrel{\eqref{eq:st-L2}}{=}
 \frac{1}{r^{2, \ell } (\ell w )} + \frac{\ell w}{\sigma_\beta} =
1 + \frac{\ell w}{\sigma_\beta} ,
\ee
then $a_\ell (v ) = \ell w$.
In the remaining region $v > 1 + \frac{\ell w}{\sigma_\beta}$
the value $a_\ell (v )$ is recovered from:
\be{eq:der-a}
 v = \frac{\dd }{\dd a}\Big|_{a=a_\ell } \tau\lb \calL_\ell^2 (a)\rb  + \frac{a_\ell}{\sigma_\beta}
 \stackrel{\eqref{eq:st-L2},\eqref{eq:rad-2a}}{=}
 \sqrt{\frac{4 -w}{4-a_\ell/\ell}} + \frac{a_\ell}{\sigma_\beta} \df
 \sqrt{\frac{4- w}{4-b_\ell}} + \frac{\ell b_\ell}{\sigma_\beta} .
\ee
Thus, both $a_\ell (v )$ and $b_\ell (v )\df a_\ell (v )/\ell $ are well defined for
any $v\in\bbR$.  Of course we consider only $v\in \left[0  , \infty \right)$. If  $m >\ell$, then, by
definition, we have for all
$v > 1 + \frac{m w}{\sigma_\beta}$
(i.e. when both curves $\calG_\ell^2$ and $\calG_m^2$ have tangent lines with slope $v$)  that
\be{eq:lm-bound1}
v =  \sqrt{\frac{4- w}{4-b_\ell (v )}} + \frac{\ell b_\ell (v )}{\sigma_\beta} =
 \sqrt{\frac{4- w}{4-b_m (v )}} + \frac{m b_m (v )}{\sigma_\beta}.
\ee
If $v \in (1 + \frac{\ell w}{\sigma_\beta}, 1 + \frac{m w}{\sigma_\beta}] $, then $b_m (v ) = w$ and
\be{eq:lm-bound2}
 v = \sqrt{\frac{4- w}{4-b_\ell (v )}} + \frac{\ell b_\ell (v )}{\sigma_\beta} <
 \sqrt{\frac{4- w}{4-b_m (v )}} + \frac{m b_m (v )}{\sigma_\beta},
\ee
Finally, if $v \leq 1 +{\frac{\ell w}{\sigma_\beta}}  $ then $b_\ell (v )= b_m (v )  = w$, and the second inequality
in \eqref{eq:lm-bound2} trivially holds. Together   \eqref{eq:lm-bound1} and \eqref{eq:lm-bound2}  imply  that
for any $v\in [0,\infty)$,
\be{eq:am-bm}
 a_m (v ) > a_\ell (v ) \quad {\rm and} \quad  b_m (v ) \leq  b_\ell (v )  .
\ee
\paragraph{\bf Fact~2.}
It is useful to think about $F_v$ as a function of continuous variables $\ell, a\in\bbR_+$.
By direct inspection $-\sqrt{\ell (4\ell -a )}$ {is strictly convex
on $\bbR^2_+$ and thus also on the
convex sector}
\[
\bbD_w\df \lbr (\ell , a ): 0\leq  \ell w \leq  a \leq 4\ell\rbr \subset
\bbR^2_+ .
\]
Hence, $F_v$ is strictly convex on $\bbD_w$ as well. This has the following implication:
If  $(\ell_1 , a_1 )\neq (\ell_2 , a_2 )$ are such that $F_v (\ell_1 , a_1 ) =
F_v (\ell_2 , a_2 )$, then
\be{eq:s-conv}
F_v (\ell_1 , a_1 )  > F_v (\lambda\ell_1 + (1-\lambda )\ell_2 ,
\lambda a_1  + (1-\lambda ) a_2)
\ee
for any $\lambda \in (0,1)$.
\smallskip

\noindent
Let us go back to \eqref{eq:VP-vl-2}. Clearly $\min_{a\geq 0 , \ell\in \bbN_0} F_v (\ell ,a )$ is
attained for all $v$, and, furthermore
 $(0,0) = {\rm argmin} (F_v )$ for all $v$ sufficiently small. It is also clear that
  $(0,0)\not\in {\rm argmin} (F_v )$ whenever $v$ is sufficiently large.

Therefore there exists the unique minimal values $v_{1}^{\ast} > 0$ and  $\ell_{1}^{\ast}  \ge 1$,  and, accordingly
the value $a_1^- \in Range \lb \calL^2_{\ell_1^{\ast}} \rb = [\ell_1^{\ast} w, 4\ell_1^{\ast} ]$ satisfying the condition
{\[
F_{v_{1}^{\ast}}\left(  0,0\right)  =F_{v_{1}^{\ast}}\left(  \ell_{1}^{\ast},a_{1}^{-}\right)
\]
}

{Let us show that $\ell_{1}^{\ast}=1$. Indeed, assume that $\ell_{1}^{\ast}>1$. But then
for the value $\ell=1$, intermediate between
$\ell=0$ and $\ell_{1}^{\ast}$, we have
$F_{v_1^{\ast}} (1, \frac{a_{1}^{-}}{\ell_{1}^{\ast}} )
> F_{v_1^{\ast}} (0,0 )$, which contradicts the convexity property
\eqref{eq:s-conv}.
}
{
Hence $a_1^- \in [w, 4)$. By the same strict convexity argument,
\[
 F_{v_1^*} (1 , a_1^- )  =  \min_{a}  F_{v_1^*} (1 , a  ) <  \min_{\ell >1 , a } F_{v_1^{\ast}}  (\ell , a )
  \]
By continuity  the inequality above  will persist for $v > v_1^*$
with
$v-v_1^*$ sufficiently small.
Also, $F_v (1, a_1 (v )) < F_v (0,0)$ for every
$v > v^*_1$, since the function $a\mapsto \tau\lb \calL_1^2 (a )\rb$ is strictly convex.
}
This means that there exists the maximal
$v_2^* > v_1^* $, $a_1^+ > a_1^-$  and a continuous bijection
$a_1 : [v_1^* , v_2^* ]\mapsto [a_1^- , a_1^+ ]$ such that
$(1, a (v )) = {\rm argmin} F_v$ on $(v_1^* , v_2^* )$.
\smallskip

Now we can proceed by induction. Let us define the segment  $[v_\ell^* , v_{\ell +1}^*]$ as the
maximal segment of values of
the parameter $v$, for which there exist the corresponding segment $[a_\ell^- , a_\ell^+]$ and a
continuous non-decreasing function  $a_\ell:  [v_\ell^* , v_{\ell +1}^*]
\mapsto [a_\ell^- , a_\ell^+]$ such that $(\ell , a_\ell (v )) = {\rm argmin }F_v $
for $v\in ( v_\ell^* , v_{\ell +1}^* )$. (If there are several such segments for the same
value of $\ell$, we take for $[v_\ell^* , v_{\ell +1}^*]$, by definition,  the leftmost one. Of course, we will show below that it can not be the case, but we do not suppose it now.) Our induction hypothesis is that the open segment  $(v_\ell^* , v_{\ell +1}^*)$  is non-empty, and that
\be{eq:ind-aftervl}
\min_{m <\ell} \min_{a}  F_v (m , a ) > \min_{a} F_v (\ell , a ),
\ee
for $v > v_\ell^*$.
We have already checked it for $\ell =1$.


Clearly, $(\ell , a)\not\in {\rm argmin }F_v$ whenever $v$ is sufficiently large. Thus,  $v_{\ell +1}^* <\infty$.
By induction hypothesis \eqref{eq:ind-aftervl},
\be{eq:ind-vl1-ast}
F_{v_{\ell+1}^*} (m, a ) = F_{v_{\ell+1}^*} (\ell , a_\ell^+ ) = \min F_{v_{\ell+1}^*}
\ee
implies that $ m>\ell$.
As before, using convexity and continuity arguments we can check that if \eqref{eq:ind-vl1-ast}  holds for some
$m >\ell$ and $a$ (with $(m , a )\in \bbD_w$)
then necessarily $m = \ell+1$ and, furthermore,
\[
\min_{a} F_v (\ell +1 , a ) < \min_{m >\ell +1 , a} F_v (m, a )
\]
for $v > v_{\ell+1}^*$ with $v - v_{\ell+1}^*$ sufficiently small.  The first part of the induction step is justified.

Assume finally  that $F_v (\ell , a_\ell ) = F_v (\ell +1 , a_{\ell +1} ) = \min F_v$. Then
$a_\ell < a_{\ell+1}$, as it is stated in \eqref{eq:am-bm}. By the same authority,
$b_{\ell } \geq  b_{\ell +1}$, and hence $r_{b_\ell} \leq
r_{b_{\ell +1}}$.
By
convexity of both $\tau\lb \calL_\ell^2 (a )\rb $ and
$\tau\lb \calL_{\ell +l}^2 (a )\rb $ the inequality $a_\ell < a_{\ell+1}$ implies that
\[
 \min_a F_u (\ell , a) >  \min_a F_u (\ell+1 , a)\ \text{for any $u>v$} .
\]
Consequently,  $\min_{a}  F_u (\ell  , a ) > \min_{a} F_u (\ell +1 , a )$ for any $u > v_{\ell+1}^*$,  and we are home.
\end{proof}
\subsection{Analysis of \eqref{eq:VP-v}}
As we already know, $\ell$-stacks of type 1
cannot be optimal whenever $\ell >\ell^*$ (see definition  \eqref{eq:non-dg}). Let us explore what happens  if
$\ell <\ell^*$. In this case
\[
 Range (\calL_\ell^1 )= [4 (\ell -1), \ell w]\quad {\rm and}\quad
 Range (\calL_\ell^2 ) = [\ell w , 4\ell].
\]
Thus, $Range (\calL_\ell^1 )$ shares endpoints $4(\ell -1 )$ and
$\ell w$ with $Range (\calL_{\ell-1}^2 )$ and, respectively,
$ Range (\calL_\ell^2 )$, and all these ranges have disjoint interiors. So, in principle,
 $\ell$-stack of type 1 may become optimal.
 Note that by our convention,  \eqref{eq:vr-stacks} and \eqref{eq:G-i},
 \be{eq:end-p-eq}
 \calG_{\ell - 1}^2 (4 (\ell -1 )) = \calG^1_\ell (4 (\ell -1 ))\quad {\rm and}\quad
 \calG^1_\ell (\ell w ) = \calG^2_\ell (\ell w ),
 \ee
 so, for $\ell <\ell^*$ the two families $\calG^1_\ell$, $\calG^2_\ell$ merge together into a
 single continuous function. In fact, it is even smooth,
 except that the tangent to its graph becomes vertical at endpoints $4\ell$.
 Let $v^*_\ell$ be the critical slope for variational problem \eqref{eq:VP-vl-2},
 as described in Theorem 3. By construction, there is a line $\frl (v_\ell^* )$
 with slope $v_\ell^*$ which supports both the graphs of $\calG_{\ell -1}^2$ and
 of $\calG_{\ell }^2$.

 \noindent
 {\bf Definition.} Let us say that the graph of
 $\calG^1_\ell$
 {\em sticks out below} $\frl (v_\ell^* )$
 if there exists $a\in Range (\calL_\ell^1 )$ such that
 \be{eq:sticks-out}
 \calG_\ell^1 (a ) < \calG_\ell^2 (a_\ell^- ) - v_\ell^* (a_\ell^-  -a ) .
 \ee
\begin{figure}[h]
\begin{center}
\includegraphics[width=\textwidth]{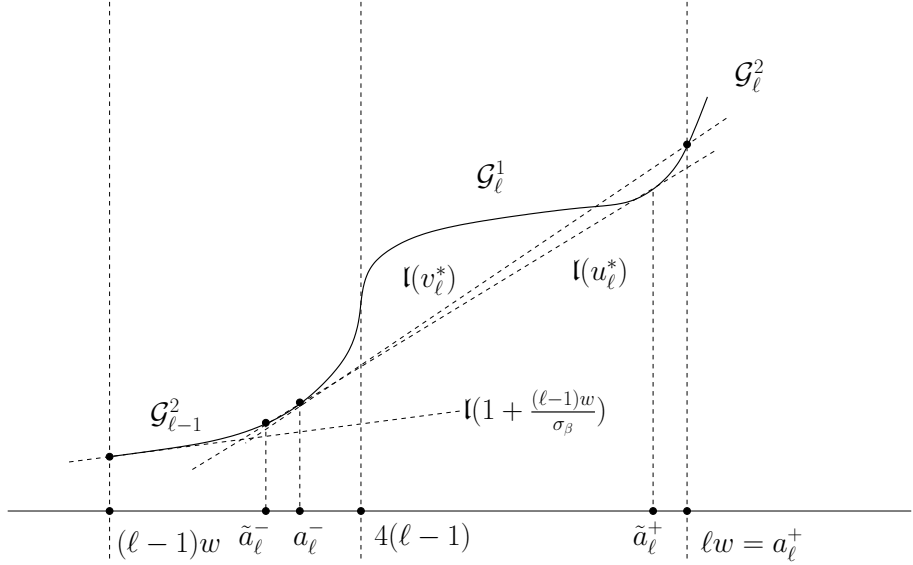}
\end{center}
\setlength{\abovecaptionskip}{-240pt}
\caption{The graph of $\calG_\ell^1$ sticks out below $\frl (v_\ell^* )$.
The transition slope $\tilde v_\ell^*$ satisfies
$1 +\frac{(\ell - 1)w}{\sigma_\beta}< \tilde v_\ell^* < v_\ell^* <
1 +\frac{\ell w}{\sigma_\beta}$.
}
\label{fig:Graphs}
\end{figure}
\noindent
 Obviously, there are optimal $\ell$-stacks of type 1 iff the graph of
 $\calG^1_\ell$
 {sticks out below} $\frl (v_\ell^* )$.
 \begin{proposition}
 \label{prop:vlstar-cond}
  For any $\ell < \ell^*$ the  graph of $ \calG_\ell^1$ {sticks out below} $\frl (v_\ell^* )$ iff
  \be{eq:vlstar-cond}
  v_\ell^* < 1 + \frac{\ell w}{\sigma_\beta} .
  \ee
  Equivalently, this happens iff
  \be{eq:vlstar-cond-1}
  \calG_{\ell -1}^2 (a ) > \calG_\ell^2 (\ell w) - \lb \ell w - a\rb \lb 1 + \frac{\ell w}{\sigma_\beta}\rb ,
  \ee
  for any $a\in Range (\calL_{\ell -1}^2 ) = \left[ (\ell -1) w , 4 (\ell -1 )\right]$.
  \end{proposition}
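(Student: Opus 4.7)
The plan is to prove the equivalence chain: sticking out $\Leftrightarrow v_\ell^* < v_0 \Leftrightarrow$ \eqref{eq:vlstar-cond-1}, where $v_0 := 1 + \frac{\ell w}{\sigma_\beta}$. The key geometric object is the line
\[
 m_\ell(a) \;:=\; \calG_\ell^2(\ell w) - v_0(\ell w - a),
\]
passing through $(\ell w, \calG_\ell^2(\ell w))$ with slope $v_0$. Because $r^{2,\ell}(\ell w) = r^{1,\ell}(\ell w) = 1$, formulas \eqref{eq:st-L2} and \eqref{eq:e1-deriv} identify $v_0$ as both the right-derivative of $\calG_\ell^2$ at $\ell w$ and the left-derivative of $\calG_\ell^1$ at $\ell w$, so $m_\ell$ is the smooth common tangent to the merged graph at $a = \ell w$. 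Recall also that by Theorem~\ref{thm:VP-vl-2}, $\frl(v_\ell^*)$ is the line of slope $v_\ell^*$ supporting $\calG_{\ell-1}^2$ at $a_{\ell-1}^+$ and $\calG_\ell^2$ at $a_\ell^-$, with $a_\ell^- = \ell w$ exactly when $v_\ell^* \leq v_0$.

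\smallskip
\noindent\emph{Step 1 ($v_\ell^* < v_0 \Leftrightarrow$ \eqref{eq:vlstar-cond-1}).} If $v_\ell^* < v_0$, then $a_\ell^- = \ell w$, so $\frl(v_\ell^*)$ and $m_\ell$ share the point $(\ell w, \calG_\ell^2(\ell w))$ with $m_\ell$ having the larger slope; consequently $m_\ell(a) < \frl(v_\ell^*)(a)$ for every $a < \ell w$, and combined with the support property $\calG_{\ell-1}^2 \geq \frl(v_\ell^*)$ on $Range(\calL_{\ell-1}^2)$ this gives \eqref{eq:vlstar-cond-1}. Conversely, if $v_\ell^* \geq v_0$ then $a_\ell^- \geq \ell w$, and convexity of $\calG_\ell^2$ yields $\frl(v_\ell^*)(\ell w) \leq \calG_\ell^2(\ell w) = m_\ell(\ell w)$; the slope inequality $v_\ell^* \geq v_0$ propagates this to $\frl(v_\ell^*) \leq m_\ell$ throughout $(-\infty, \ell w]$. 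If in addition \eqref{eq:vlstar-cond-1} held, one would obtain $\calG_{\ell-1}^2 > m_\ell \geq \frl(v_\ell^*)$ strictly on $Range(\calL_{\ell-1}^2)$, contradicting the tangency of $\frl(v_\ell^*)$ to $\calG_{\ell-1}^2$ at $a_{\ell-1}^+$.

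\smallskip
\noindent\emph{Step 2 ($v_\ell^* < v_0 \Leftrightarrow$ sticking out).} The forward direction is local at $\ell w$: since $a_\ell^- = \ell w$ gives $\calG_\ell^1(\ell w) = \frl(v_\ell^*)(\ell w)$ while the left-derivative of $\calG_\ell^1$ at $\ell w$ equals $v_0 > v_\ell^*$, a Taylor expansion yields
\[
 \calG_\ell^1(\ell w - \epsilon) - \frl(v_\ell^*)(\ell w - \epsilon) \;=\; -(v_0 - v_\ell^*)\epsilon + O(\epsilon^2) \;<\; 0
\]
for all sufficiently small $\epsilon > 0$. For the contrapositive, assume $v_\ell^* \geq v_0$, and set $h(a) := \calG_\ell^1(a) - \frl(v_\ell^*)(a)$ on $[4(\ell-1), \ell w]$. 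With $K_\ell := \ell w - 4(\ell-1) > 0$, one computes from \eqref{eq:e1-deriv} that
\[
 h'(a) \;=\; \sqrt{\frac{K_\ell}{a - 4(\ell-1)}} + \frac{a}{\sigma_\beta} - v_\ell^* ,
\]
and the first summand is strictly convex in $a$ (its second derivative $\frac{3}{4} K_\ell^{1/2}(a - 4(\ell-1))^{-5/2}$ is positive); hence $h'$ is strictly convex on $[4(\ell-1), \ell w]$ with $h'(4(\ell-1)^+) = +\infty$ and $h'(\ell w) = v_0 - v_\ell^* \leq 0$. A strictly convex function that starts at $+\infty$ and ends $\leq 0$ is first positive and then non-positive on the interval, so $h$ is unimodal and its minimum on $[4(\ell-1), \ell w]$ is attained at one of the endpoints. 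Both endpoint values are non-negative by the support property of $\frl(v_\ell^*)$: $h(\ell w) = \calG_\ell^2(\ell w) - \frl(v_\ell^*)(\ell w) \geq 0$ and $h(4(\ell-1)) = \calG_{\ell-1}^2(4(\ell-1)) - \frl(v_\ell^*)(4(\ell-1)) \geq 0$. Therefore $h \geq 0$ on $Range(\calL_\ell^1)$, i.e., $\calG_\ell^1$ does not stick out below $\frl(v_\ell^*)$.

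\smallskip
The main obstacle is the unimodality argument in Step~2, which relies on strict convexity of $h'$; this in turn follows from the explicit square-root form of the surface tension $\tau(\calL_\ell^1(a))$. All remaining steps reduce to routine bookkeeping with support-line geometry at $a = \ell w$.
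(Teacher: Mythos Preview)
Your proof is correct and follows the same overall strategy as the paper: the sufficiency direction and the equivalence with \eqref{eq:vlstar-cond-1} are handled identically (local expansion at $a=\ell w$ and support-line geometry), and the necessity direction is a derivative analysis based on the explicit square-root form of $\tau(\calL_\ell^1)$. The only organizational difference is in that necessity step: the paper works with the second derivative of $\calG_\ell^1$, observing that it is increasing from $-\infty$ so that $\calG_\ell^1$ is concave-then-convex, and combines this with the endpoint controls; you instead observe that $h'$ is strictly convex with $h'(4(\ell-1)^+)=+\infty$ and $h'(\ell w)\le 0$, forcing a single sign change and hence unimodality of $h$, so that its minimum sits at an endpoint. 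Your packaging is arguably cleaner, since unimodality delivers the endpoint-minimum conclusion in one stroke, whereas the paper's sketch leaves the reader to stitch the concave and convex pieces together; but the underlying computation and the information used are the same.
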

  \begin{proof}
  Note first of all that in view of \eqref{eq:v-l-slope}, the condition \eqref{eq:vlstar-cond}
  necessarily implies that
   $a_\ell^- = \ell w$.
   Consequently the fact that \eqref{eq:vlstar-cond} and \eqref{eq:vlstar-cond-1} are
   equivalent is straightforward,
   since by construction $\frl (v_\ell^* )$ supports both graphs.

   Note that since  $\frac{\dd^+}{\dd a} \calG_{\ell -1}^2 (4 (\ell -1 ))=\infty$ and
   since {$\calL_{\ell -1}^2 ( 4(\ell - 1)) = \calL_\ell^1 ( 4(\ell -1 ))$}  the graph
   of $\calG_\ell^1$ has
   to stay above $\frl (v_\ell^* )$ for values of $a\in Range (\calL_\ell^1 )$ which are sufficiently
   close to $4 (\ell -1 )$.

   Let us compute the second derivative
   \be{eq:2d-El1}
   \frac{\dd^2}{\dd a} \calG^1_\ell (a )
   \stackrel{\eqref{eq:st-L1}}{=} \frac{1}{\sigma_\beta} - \frac{1}{2}
   \sqrt{\frac{\ell w - (4(\ell -1)}{(a - 4 (\ell -1 ))^3}}.
   \ee
   The expression on the right hand side above is increasing (from $-\infty $) on
   $Range (\calL_\ell^1 )$. If it is non-positive on the whole interval, then the graph of $\calG_\ell^1$ is
   concave and it cannot stick out. Otherwise, the graph of $\calG_\ell^1$ is convex near
   the right end point $\ell w$ and hence its maximal derivative on the convex part is attained at
   $a = \ell w$ and equals to $1 + \frac{\ell w}{\sigma_\beta }$. Therefore,
   \eqref{eq:vlstar-cond}  is a necessary condition for the graph of
   $\calG_\ell^1$ to stick out.

   To see that it is also a sufficient condition recall once again
    that
   $v_\ell^* < 1 +\frac{\ell w}{\sigma_\beta }$ means that $\frl (v_\ell^* )$ supports
   $\calG_\ell^2$ at the left end point $a=\ell w$. {But then, $\calG_\ell^1$  goes
   below $\frl (v_\ell^* )$ for all values of $a\in Range (\calL_\ell^1 )$ which are
   sufficiently close to $\ell w$,
   because ``the union of $\calG_\ell^1$  and $\calG_\ell^2$'' is smooth at $a=\ell w$.}
   In particular, it should have a convex part.
  \end{proof}
\begin{remark} The argument above does not imply that for $\ell <\ell^* = \frac{4}{4-w }$ the
graph of $\calG_\ell^1$ sticks out if
 it has a convex part near $\ell w$. The latter is a necessary condition which gives the following
 upper bound on the maximal number of layers $\ell$ such that $\calG_\ell^1$ may stick out: Let
 us substitute $a = \ell w$ into \eqref{eq:2d-El1}:
 \be{eq:stick-ub}
 \frac{1}{\sigma_\beta} - \frac{1}{2} \sqrt{\frac{\ell w - 4(\ell -1)}{(\ell w - 4 (\ell -1 ))^3}} > 0 \
 \stackrel{\eqref{eq:non-dg}}{\Leftrightarrow} \ \ell < \ell^*\lb 1- \frac{\sigma_\beta}{8}\rb .
 \ee
\end{remark}
\begin{proposition}
 \label{prop:stick-out}
 If $w \leq 2\sigma_\beta$, then the graph of $\calG_\ell^1$ does not stick out for 
 any value of
 $\ell <\ell^* \lb 1- \frac{\sigma_\beta}{8}\rb$ (and hence
 stacks of type 1 are never optimal).

 If, however, $w > 2\sigma_\beta$, then there exists a
 number $k^*$, $1 \leq k^* < \ell^*
 \lb 1- \frac{\sigma_\beta}{8}\rb$ such that the graphs of
 $\calG_\ell^1$ stick out below
 $\frl (v_\ell^* )$-s for any $\ell = 1, \dots, k^*$, and they do not
 stick out for $\ell > k^*$.
\end{proposition}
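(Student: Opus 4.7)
The plan is to reformulate the sticking-out condition as strict positivity of the minimum of a single explicit function, and then to use an envelope-theorem computation to establish monotonicity in $\ell$. For $\ell\geq 2$, I make the change of variable $a=(\ell-1)b$, so that $a$ ranging over $Range(\calL_{\ell-1}^2)=[(\ell-1)w,4(\ell-1)]$ corresponds to $b\in[w,4]$. Using \eqref{eq:st-L2} and the identity $\calG_\ell^2(\ell w)=2\ell w+\ell^2 w^2/(2\sigma_\beta)$, a direct computation recasts \eqref{eq:vlstar-cond-1} as $\min_{b\in[w,4]} g_\ell(b)>0$, where
\[
g_\ell(b)\,=\,(\ell-1)\bigl[T(b)-b\bigr]\,-\,\ell w\,+\,\frac{(\ell w-(\ell-1)b)^2}{2\sigma_\beta},\quad T(b)\,:=\,8-2\sqrt{(4-b)(4-w)}.
\]
A one-line substitution yields the $\ell$-independent boundary value
\[
g_\ell(w)\,=\,-w+\frac{w^2}{2\sigma_\beta}\,=\,w\Bigl(\frac{w}{2\sigma_\beta}-1\Bigr).
\]

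If $w\leq 2\sigma_\beta$, then $g_\ell(w)\leq 0$, so $\min g_\ell\leq 0$ and sticking out fails for every $\ell<\ell^*(1-\sigma_\beta/8)$; the degenerate case $\ell=1$ (where the range collapses to $\{0\}$) is handled by the same identity applied at $a=0$. If instead $w>2\sigma_\beta$, then $g_1(0)>0$, so $k^*\geq 1$.

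For $\ell\geq 2$, strict convexity of $g_\ell$ (from $T''(b)>0$ together with the positive quadratic term) combined with $g_\ell'(w)=-(\ell-1)w/\sigma_\beta<0$ and $g_\ell'(4^-)=+\infty$ places the unique minimizer $b^*(\ell)$ in the open interval $(w,4)$. The first-order condition at $b^*$ reads $T'(b^*)-1=(\ell w-(\ell-1)b^*)/\sigma_\beta$. Substituting this into the expression for $\partial_\ell g_\ell$ and simplifying yields the clean formula
\[
\frac{d}{d\ell}\,\min_b g_\ell(b)\,=\,\phi(b^*)-2w,\qquad \phi(b)\,:=\,T(b)+(w-b)\,T'(b).
\]
Since $\phi'(b)=(w-b)T''(b)<0$ for $b>w$ and $\phi(w)=T(w)=2w$, I obtain $\phi(b^*)<2w$, hence $\min_b g_\ell(b)$ is strictly decreasing in $\ell$. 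Combined with the necessary condition $\ell<\ell^*(1-\sigma_\beta/8)$ from \eqref{eq:stick-ub}, strict monotonicity forces the existence of a unique integer $k^*\in[1,\ell^*(1-\sigma_\beta/8))$ such that $\min g_\ell>0$ iff $\ell\leq k^*$.

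The main obstacle is the monotonicity step: the envelope derivative at the interior minimizer $b^*$ must be signed, and the algebraic reduction to $\phi(b^*)-2w$, together with the observation that $\phi(w)=2w$ and $\phi'<0$ on $(w,4)$, is precisely what makes the sign determination tractable without having to solve the first-order condition explicitly.
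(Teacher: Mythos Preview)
Your argument is correct and takes a genuinely different route from the paper's for the monotonicity step.

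The paper's Step~2 keeps the original variable $a=(\ell-1)w+c$ and rewrites \eqref{eq:vlstar-cond-1} as
\[
\Bigl(\tfrac{w^2}{2\sigma_\beta}-w\Bigr)+\int_0^c\Bigl(\tfrac{1}{r^{2,\ell-1}((\ell-1)w+\tau)}-\bigl(1+\tfrac{\ell w}{\sigma_\beta}\bigr)\Bigr)\,d\tau>0,
\]
then observes that $r^{2,k}(kw+\tau)=\sqrt{1-\tau/((4-w)k)}$ is increasing in $k$ for fixed $\tau$, so the integrand---and hence the integral---only gets larger when $\ell$ is replaced by any smaller $m$. This is a pointwise comparison that avoids any optimisation in $b$ or differentiation in $\ell$.

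Your approach instead normalises the domain via $a=(\ell-1)b$, reducing the problem to the sign of $\min_{b\in[w,4]}g_\ell(b)$ on an $\ell$-independent interval, and then differentiates the minimum in $\ell$ using the envelope theorem. The resulting identity $\tfrac{d}{d\ell}\min_b g_\ell=\phi(b^*)-2w$ with $\phi(b)=T(b)+(w-b)T'(b)$ is clean, and signing it via $\phi(w)=2w$, $\phi'=(w-b)T''<0$ on $(w,4)$ is a nice touch. What this buys you is an explicit rate of decrease and a more structural proof; what the paper's route buys is that it needs no smoothness in $\ell$, no implicit-function argument for $b^*(\ell)$, and no envelope theorem---just a direct inequality between integrands. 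Both arrive at exactly the same monotonicity, and your handling of the degenerate $\ell=1$ case and the appeal to \eqref{eq:stick-ub} for the upper bound on $k^*$ match the paper's.
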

\begin{proof}
The proof comprises two steps.
\smallskip

\noindent
\step{1} We claim that the graph of $\calG_1^1$ sticks out 
below $\frl (v_1^* )$ iff $w > 2\sigma_\beta$.

Indeed, recall that $\frl (v_1^* )$ is the line which passes through the origin and which is tangent to
the graph of $\calG_1^2$. Since the latter is convex and increasing, $v_1^* < 1+ \frac{w}{\sigma_\beta}$
iff
\[
 \calG_1^2 (w ) < w \lb 1+ \frac{w}{\sigma_\beta}\rb \ \Leftrightarrow\
 2w +\frac{w^2}{2\sigma_\beta} < w + \frac{w^2}{\sigma_\beta}\
 \Leftrightarrow\  w > 2\sigma_\beta,
\]
so the claim follows from Proposition~\ref{prop:vlstar-cond}.
\smallskip

\noindent
\step{2} We claim that for any $1\leq m <\ell$, if the graph of $\calG_\ell^1$ sticks out below
$\frl (v_\ell^* )$, then the graph of $\calG_m^1$ sticks out below
$\frl (v_m^* )$.

Assume that  \eqref{eq:vlstar-cond-1} holds. First of all take $a= (\ell -1 )w$.
Recall that $\calG_\ell^2 (\ell w ) = 2\ell w + (\ell w)^2/2\sigma_\beta$. Therefore,
\be{eq:G-l-diff}
\calG_{\ell -1}^2\lb (\ell -1 )w\rb - \calG_\ell^2 (\ell w ) + w\lb 1+ \frac{\ell w}{\sigma_\beta}\rb =
\frac{w^2}{2\sigma_\beta} - w  >0 .
\ee
Furthermore, if we record the range $a\in Range (\calL_{(\ell-1)}^2 )$ as $a = (\ell -1)w  +c$;
$c\in [0, (4- w)(\ell -1 )]$, then, in view of \eqref{eq:G-l-diff}, the necessary and sufficient
condition \eqref{eq:vlstar-cond-1} for the graph of $\calG_\ell^1$ to stick out reads as:
\be{eq:c-comp}
\begin{split}
 &\calG_{\ell -1}^2\lb (\ell -1 )w +c \rb - \calG_\ell^2 (\ell w ) + (w-c )
 \lb 1+ \frac{\ell w}{\sigma_\beta}\rb\\
 &\quad  = \lb \frac{w^2}{2\sigma_\beta} - w\rb  + \int_0^c \lb \frac{\dd}{\dd \tau} \calG_{\ell -1}^2
 \lb  (\ell -1 )w +\tau \rb - \lb 1+ \frac{\ell w}{\sigma_\beta}\rb\rb \dd\tau\\
 &\quad
 \stackrel{\eqref{eq:st-L2}}{=}
 \lb \frac{w^2}{2\sigma_\beta} - w\rb
 + \int_0^c
 \lb
 \frac{1}{r^{2, \ell -1} ( (\ell -1 )w +\tau )}
 - \lb 1+ \frac{\ell w}{\sigma_\beta}\rb
 \rb \dd\tau > 0  .
\end{split}
\ee
for any $c \in [0,(4- w)(\ell -1 )]$.

Now for any $k\in \bbN$ (and $\tau \in [0,(4-w )k]$),
\[
 {r^{2, k } ( k w +\tau )}
 \stackrel{\eqref{eq:rad-2a}}{=} \sqrt{\frac{4 k - (k w +\tau )}{ (4- w)k }}
 \stackrel{\eqref{eq:non-dg}}{=} \sqrt{1 - \frac{\tau }{ (4- w) k  }} .
\]
That means that for a fixed $\tau$ the function
$k\mapsto r^{2, k } ( k w +\tau )$ is
increasing in $k$. Therefore, \eqref{eq:c-comp} implies that,
\be{eq:c-comp-m}
\lb \frac{w^2}{2\sigma_\beta} - w\rb
 + \int_0^c
 \lb
 \frac{1}{r^{2, m -1} ( (m -1 )w +\tau )}
 - \lb 1+ \frac{m w}{\sigma_\beta}\rb
 \rb \dd\tau > 0 ,
 \ee
 for any $m=1, \dots , \ell$ and, accordingly, any
 $c\in [0, (m-1)(4-w )]$. Consequently,
 \eqref{eq:vlstar-cond-1} holds for any $m\leq \ell$.
\end{proof}
Assume now that $\ell <\ell^*$ and that the graph of $\calG^1_\ell$ sticks out below
$\frl (v_\ell^* )$. This means that there exits a range of slopes
$(\tilde v_\ell^* , v_\ell^* )$ such that for any $v\in (\tilde v_\ell^* , v_\ell^* )$,
 one can find $a = a(v )$, such that $\calL_\ell^1 (a(v ))$
is  the unique solution to the
variational problem \eqref{eq:VP-v}. By \eqref{eq:e1-deriv},
\[
 v = \frac{1}{r^{1, \ell} (a (v ))} + \frac{a (v )}{\sigma_\beta } \geq 1 + \frac{4 (\ell -1)}{\sigma_\beta}.
\]
Hence, in view of Proposition~\ref{prop:vlstar-cond} (and in view
of the fact that by Proposition~\ref{prop:stick-out} the graph of
$\calG_{\ell-1}^1$ has to stick out as well and consequently
$v_{\ell -1}^* <1 +\frac{(\ell -1 )w}{\sigma_\beta}$ ),
\[
 \tilde v_\ell^* > 1 + \frac{(\ell -1 )w}{\sigma_\beta} > v_{\ell - 1}^* .
\]
Which means that in the range of slopes $[1+ \frac{(\ell -1)w}{\sigma_\beta}, \tilde v_\ell^*)$
the $(\ell-1)$-stacks of type 2 continue to be optimal.

The structure of solutions and first order transitions in terms of optimal layers and optimal
areas is summarized in Theorem~\ref{thm:k-star-slopes} and depicted on Figure~\ref{fig:Trans}.


\begin{bigthm}
\label{thm:k-star-slopes}
If $w \leq 2\sigma_\beta$, then solutions
to the variational problem \eqref{eq:VP-v} are as
described in Theorem~\ref{thm:VP-vl-2}.

 If, however, $w > 2\sigma_\beta$, then there exists a number
 $1 \leq k^* < \ell^*
 \lb 1- \frac{\sigma_\beta}{8}\rb$ such that
 the following happens: For every $\ell = 1, \dots, k^*$ there exists a slope $\tilde v_\ell^*$;
 \[
  1 + \frac{(\ell -1)w}{2\sigma_\beta} < \tilde v_\ell^* < v_\ell^* < 1 + \frac{\ell w}{2\sigma_\beta},
 \]
such that

\noindent
1. The empty stack $\calL_0^2$ is the unique solution to \eqref{eq:VP-v}
for $v\in [0,\tilde{v}_1^* )$.

\noindent
2. For every $\ell = 1, \dots, k^*$ there is an area $\tilde a_\ell^-\in (4(\ell -1), \ell w)$
and a continuous increasing bijection
$\tilde a_\ell :  [\tilde v_\ell^* ,1+ \frac{\ell w}{2\sigma_\beta}]\mapsto
[ \tilde a_\ell^- , \ell w ]$
such that the $\ell$-stack $\calL_\ell^1 \lb \tilde a_\ell (v) \rb$ of type 1 is the
unique solution to \eqref{eq:VP-v} for every
$v\in (\tilde v_\ell^* ,1+ \frac{\ell w}{2\sigma_\beta})$.

\noindent
3.  For every $\ell <  k^*$
the $\ell$-stack $\calL_\ell^2 (a_\ell (v ))$ of type 2 is the unique solution
to \eqref{eq:VP-v} for every $v\in (1 + \frac{\ell w}{\sigma_\beta} , \tilde v_{\ell+1}^* )$,
where $a_\ell$ is the bijection described in Theorem~\ref{thm:VP-vl-2}.

\noindent
4.  The stack $\calL_{k^*}^2 (a_{k^*} (v ))$ is the unique solution to \eqref{eq:VP-v}
for every $v\in [ 1+ \frac{k^* w}{\sigma_\beta} , v_{k^* +1} )$.

\noindent
5.  Finally, for every $\ell > k^*$ the transition slope
$v^*_\ell \geq 1+ \frac{\ell w}{\sigma_\beta }$, and the stack $\calL_\ell^2 (a_\ell (v))$
of type 2 is the unique solution to \eqref{eq:VP-v} for every $v \in (v_\ell^* , v^*_{\ell +1})$.
\end{bigthm}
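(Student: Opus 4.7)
I would organize the proof as a case analysis. The case $w \leq 2\sigma_\beta$ is essentially immediate from the machinery already developed: Proposition~\ref{prop:stick-out} says that no graph $\calG_\ell^1$ sticks out below its line $\frl(v_\ell^*)$, and Subsection~\ref{sub:irrelevance} rules out $\ell$-stacks of type 1 for $\ell>\ell^*$. Consequently the infimum in \eqref{eq:VP-v} is realized along $\calG^2=\min_\ell\calG_\ell^2$ for every slope, and the entire description is inherited from Theorem~\ref{thm:VP-vl-2}.

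For $w>2\sigma_\beta$ the integer $k^*$ (with $1\le k^*<\ell^*(1-\sigma_\beta/8)$) is exactly the one produced by Proposition~\ref{prop:stick-out}. For $\ell\in\{1,\dots,k^*\}$ the hypothesis that $\calG_\ell^1$ sticks out below $\frl(v_\ell^*)$ is, by Proposition~\ref{prop:vlstar-cond}, equivalent to $v_\ell^*<1+\ell w/\sigma_\beta$; through \eqref{eq:v-l-slope} this in turn forces $a_\ell^-=\ell w$. The plan is to show that inside the Theorem~\ref{thm:VP-vl-2} transition window from the $(\ell-1)$-phase to the $\ell$-phase of type~2, a new intermediate window opens in which a type-1 $\ell$-stack is strictly optimal. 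To define $\tilde v_\ell^*$ I would set it to be the infimum of slopes at which a tangent line to $\calG_\ell^1$ lies weakly below the joint graph $\calG_{\ell-1}^2\cup\calG_\ell^2$, and set $\tilde a_\ell^-$ to be the contact area of that extremal tangent; this is well posed because the second-derivative formula \eqref{eq:2d-El1} exhibits a convex portion of $\calG_\ell^1$ near the right endpoint $a=\ell w$ that is monotonically increasing in slope.

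The construction of the bijection $\tilde a_\ell$ would then proceed by inverting the relation $v=1/r^{1,\ell}(a)+a/\sigma_\beta$ from \eqref{eq:e1-deriv} restricted to this convex portion; strict monotonicity and continuity follow from strict convexity of $\calG_\ell^1$ on $[\tilde a_\ell^-,\ell w]$. The right endpoint is matched to the type-2 $\ell$-phase at $a=\ell w$ via the identity \eqref{eq:end-p-eq}, which provides the smooth glueing needed to import item~3 of the theorem from Theorem~\ref{thm:VP-vl-2}. For the lower bound on $\tilde v_\ell^*$ I would invoke the argument already sketched just before the theorem: the smallest slope at which $\calG_\ell^1$ supports a line from below is attained at $a=4(\ell-1)$ with $r=1$, giving at least $1+4(\ell-1)/\sigma_\beta>1+(\ell-1)w/\sigma_\beta$, which by Proposition~\ref{prop:stick-out} applied to $\ell-1\le k^*$ is strictly larger than $v_{\ell-1}^*$. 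This prevents overlap with the type-2 $(\ell-1)$-regime of Theorem~\ref{thm:VP-vl-2} and yields item~3 as stated.

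Item~4 is then a boundary case: for the largest $\ell=k^*$ for which $\calG_{k^*}^1$ sticks out but $\calG_{k^*+1}^1$ does not, the type-2 $k^*$-phase extends without a type-1 insertion up to the next transition $v_{k^*+1}^*$. Item~5 follows from the contrapositive of Proposition~\ref{prop:vlstar-cond}: for $\ell>k^*$ non-sticking-out is equivalent to $v_\ell^*\ge 1+\ell w/\sigma_\beta$, so the type-2 chain from Theorem~\ref{thm:VP-vl-2} is unbroken past $k^*$. The main obstacle I anticipate is the bookkeeping near each $\tilde v_\ell^*$: one has to exclude that at these slopes the optimum ever jumps into an unexpected competitor (for instance a type-2 stack with a different number of layers, or some $\calG_m^1$ with $m\neq\ell$). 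This requires combining the monotonicity \eqref{eq:am-bm} obtained in Theorem~\ref{thm:VP-vl-2} with the monotonicity in $\ell$ embedded in Proposition~\ref{prop:stick-out} and repeating the strict convexity / continuity argument of Fact~2 in the enlarged family $\{\calG_m^i\}_{m\ge 0,\,i=1,2}$ rather than only in $\{\calG_m^2\}$.
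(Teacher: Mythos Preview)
Your proposal is correct and follows essentially the same route as the paper: the theorem is assembled from Theorem~\ref{thm:VP-vl-2}, Propositions~\ref{prop:vlstar-cond} and~\ref{prop:stick-out}, together with the short argument (immediately preceding the theorem in the paper) that pins down $\tilde v_\ell^*$ and shows $\tilde v_\ell^* > 1+(\ell-1)w/\sigma_\beta > v_{\ell-1}^*$.

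One small slip to fix: in your lower bound for $\tilde v_\ell^*$ you write that the minimum is ``attained at $a=4(\ell-1)$ with $r=1$''. These two conditions are incompatible (at $a=4(\ell-1)$ one has $r^{1,\ell}=0$, while $r^{1,\ell}=1$ corresponds to $a=\ell w$). The correct derivation, and the one the paper uses, is simply to combine the two separate bounds $r^{1,\ell}(a)\le 1$ and $a\ge 4(\ell-1)$ in the identity $v=1/r^{1,\ell}(a)+a/\sigma_\beta$ from \eqref{eq:e1-deriv}; this yields $v\ge 1+4(\ell-1)/\sigma_\beta$ as a (non-attained) lower bound, and since $4>w$ one gets $\tilde v_\ell^*>1+(\ell-1)w/\sigma_\beta$, which by Proposition~\ref{prop:vlstar-cond} applied at level $\ell-1$ exceeds $v_{\ell-1}^*$.
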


\vskip -0.5in
\begin{figure}[h]
\begin{center}
\includegraphics[width=\textwidth]{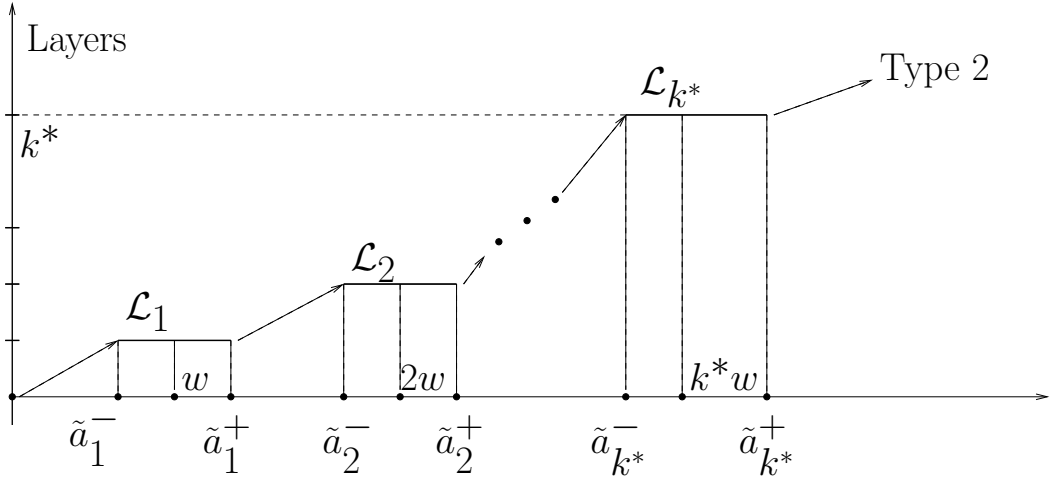}
\end{center}
\setlength{\abovecaptionskip}{-275pt}
\caption{ For $\ell =1, \dots,  k^* <\ell^*$ families
$\calL^1_\ell$ of type one are optimal for
$a\in (\tilde a_\ell^- , \ell w )$, whereas
families $\calL^2_\ell$ of type 2 are optimal for $a\in (\ell w , \tilde{a}_\ell^+ )$.
First order
transitions - jumps in terms of number of optimal layers from $\ell-1$ to $\ell$, and in terms of sizes
of optimal areas  from $\tilde{a}_{\ell-1}^+$ to $\tilde{a}_{\ell}^-$  (where we set $\tilde{a}_0^+ = 0$) -
occur at transition slopes
$\tilde{v}_{\ell}^*$.
For $\ell >k^*$ only families of type two are optimal,
and first order transitions occur  as described in Theorem~\ref{thm:VP-vl-2}.
}
\label{fig:Trans}
\end{figure}




\section{Low temperature level lines}
\label{sec:level-lines}
 The main contribution to the event $\left\{  \Xi_N \geq  \rho_\beta
N^{3} + A N^{2}\right\}  $ comes from bulk fluctuations and creations of
macroscopic size facets (large contours - see below) of the interface $\Gamma
$. In order to formulate the eventual reduced model let us first of all
collect the corresponding notions and facts from \cite{IS08}. \smallskip

\subsection{Bulk Fluctuations.}
For each $\beta$ fixed bulk fluctuations are governed by local limit results for
sums
of Bernoulli random variables, as the linear size of the
system $N\to\infty$.
Let us record a more
quantitative  version of \eqref{eq:bulk-ll}:
For every $K$ fixed
\begin{equation}
\label{eq:BulkLL}
\log\bbP_{N , \beta}\lb \Xi_N  =  \rho_\beta N^3 +A N^2\big| \Gamma
\rb
 =
-N{\frac{\lb \delta_\beta  -\frac{\alpha (\Gamma )}{N^2} \rb^2}{2D_\beta}}
%
+ O \left(  \log N\right)  ,
\end{equation}
uniformly
in
$A\leq K$ and $ |\alpha(\Gamma)|\leq KN^{2}$. \smallskip

\subsection{Contours and their Weights.} There is a natural contour
parametrization of surfaces $\Gamma$. Namely, given an interface $\Gamma$ and,
accordingly, the height function $h_{\Gamma}$ which, by definition, is
identically zero outside $\Lambda_{N}^{\circ}$, define the following
semi-infinite subset $\widehat{\Gamma}$ of $\mathbb{R}^{3}$,
\[
\widehat{\Gamma}\, =\, \bigcup_{\overset{(x,y, k)}{k < h_{\Gamma}(x, y)}}\,
\left(  (x,y, k) +\widehat{C}\right)  ,
\]
where $\widehat{C} = [-1/2, 1/2]^{3}$ is the unit cube. The above union is
over all $(x, y)\in\mathbb{Z}^{2}$ and $k\in1/2 +\mathbb{Z}$.

Consider now the level sets of $\Gamma,$ i.e. the sets
\[
H_{k}\, =\, H_{k}\left(  \widehat\Gamma\right)  \, =\, \left\{  \left(
x,y\right)  \in\mathbb{R}^{2}:\left(  x,y,k\right)  \in\widehat\Gamma\right\}
,\ k=-N,\, -N+1,\dots, \, N.
\]
We define
\textit{contours} as the connected components of sets $\partial H_{k}$,
subject to south-west splitting rules,
{
see Section~2.1 in \cite{IST15} or \cite{DKS}.
}
The length $\left\vert {\gamma}\right\vert $ of a contour is
defined in an obvious way. Since, by construction all contours are closed
{
self-avoiding
}
polygons composed of
the nearest neighbor bonds of $\Lambda_{N}^{*}$, the
notions of interior $\mathrm{int}(\gamma)$ and exterior $\mathrm{ext}%
(\gamma)$ of a contour $\gamma$ are well defined. A contour ${\gamma}$\textit{
}is called a $\oplus$-contour ($\ominus$-contour), if the values of the
function $h_{\Gamma}$ at the immediate exterior of $\gamma$ are smaller
(bigger) than those at the immediate interior of $\gamma$.

Alternatively, let us orient the bonds of each contours $\gamma\subseteq
\partial H_{k}$ in such a way that when we traverse $\gamma$ the set $H_{k}$
remains to the right. Then $\oplus$-contours are those which are clockwise
oriented with respect to their interior, whereas $\ominus$-contours are
counter-clockwise oriented with respect to their interior.

Let us say that two oriented contours $\gamma$ and $\gamma^{\prime}$ are
compatible, $\gamma\sim\gamma^{\prime}$, if

\begin{enumerate}
\item Either $\mathrm{int} (\gamma)\cap\mathrm{int} (\gamma^{\prime
})=\emptyset$ or $\mathrm{int} (\gamma)\subseteq\mathrm{int} (\gamma^{\prime
})$ or $\mathrm{int} (\gamma^{\prime})\subseteq\mathrm{int} (\gamma)$.

\item Whenever $\gamma$ and $\gamma^{\prime}$ share a bond $b$, $b$ has the
same orientation in both $\gamma$ and $\gamma^{\prime}$.
\end{enumerate}

A family $\Gamma= \left\{  {\gamma}_{i}\right\}  $ of oriented contours is
called consistent, if contours of $\Gamma$ are pair-wise compatible. It is
clear that the interfaces $\Gamma$ are in one-to-one correspondence with
consistent families of oriented contours. The height function $h_{\Gamma}$
could be reconstructed from a consistent family $\Gamma=\left\{
\gamma\right\}  $ in the following way: For every contour $\gamma$ the sign of
$\gamma$, which we denote as $\mathrm{sign} (\gamma)$, could be read from it
orientation. Then,
\be{eq:h-fumction}
h_{\gamma}(x,y )\, =\, \mathrm{sign}(\gamma) \chi_{\mathrm{int}(\gamma)}(x,y )
\quad\text{and}\quad h_{\Gamma}\, =\, \sum_{\gamma\in\Gamma} h_{\gamma} ,
\ee
where $\chi_{A}$ is the indicator function of $A$.

In this way
the weights $w_{\beta}(\Gamma)$ which appear
in \eqref{eq:GammaWeight-SOS} could be recorded as follows:
Let $\Gamma=\left\{  \gamma\right\}  $ be a consistent
family of oriented (signed) contours, Then,
\begin{equation}
w_{\beta}(\Gamma)\,=\,\mathrm{exp}\left\{  -\beta\,\sum_{\gamma\in\Gamma
}\left\vert \gamma\right\vert \right\}  . \label{Gammaweight}%
\end{equation}
In the sequel we shall assume that $\beta$ is sufficiently large. By
definition the weight of the flat interface \textbf{is} $w_{\beta}(\Gamma
_{0})=1$. \smallskip


\subsection{Absence of Intermediate and {Negative} Contours.}
In the sequel a claim that a certain property holds
{\em for all $\beta$ sufficiently large} means
 that one can
find $\beta_0$, such that it holds for all $\beta \geq \beta_0$.

For all $\beta$
sufficiently large the following rough apriori bound holds (see (6.2) in
\cite{IS08}): There exist positive combinatorial (that is independent of
$\beta$) constant $\nu$  such
for every $b_{0}>0$ fixed,
\begin{equation}
\log{\mathbb{P}}_{N, \beta } \left(  |\alpha(\Gamma)|\,>\,bN^{2}\right)
\,\lesssim\,-\nu\beta  N\sqrt{b}, \label{c3nu}%
\end{equation}
uniformly in $b\geq b_{0}$ and in $N$ large enough. A comparison with
\eqref{eq:BulkLL} reveals that we may fix $K_\beta = K_\beta (A)$
and ignore $\Gamma$ with $\alpha
(\Gamma)\geq K_\beta  N^{2}$. Now let the interface $\Gamma$ with
$\alpha(\Gamma)\leq K_\beta  N^{2}$ be given by a consistent collection
of contours, and assume that $\gamma\sim\Gamma$. Of course $\alpha(\Gamma
\cup\gamma)=\alpha(\Gamma)+\alpha(\gamma)$. Then \cite{IS08} there exists a
constant $c_\beta  = c_\beta  (
{
A
}
)$ such that
\begin{equation}
\label{eq:IntermBound}-\log{\mathbb{P}_{N , \beta}}\left(  \Gamma\cup\gamma
\,\Big\vert\,\Xi_N
{
\geq
}
\rho_\beta N^{3}+ A N^{2}\right)  \lesssim c_\beta  \frac
{|\gamma|^{2}}{N}
{
\1_{{\rm sign}(\gamma ) = 1}
}
-\beta|\gamma| .
\end{equation}
Consequently, there exists $\epsilon_\beta = \epsilon_\beta (A) >0$, such that we can
rule out all
contours $\gamma$ with
$\epsilon^{-1}_\beta\log N \leq|\gamma|\leq\epsilon_\beta N$,
{
 and all negative contours $\gamma$ with $|\gamma|> \epsilon_\beta N$.
 }

It is not difficult to see that \eqref{eq:p-beta} implies that
\be{eq:eps-beta}
\liminf_{\beta\to\infty} \frac{1}{\beta}\log \epsilon_\beta (A) > -\infty
\ee
for any $A$ fixed.
\smallskip


\noindent\textbf{Definition. }
The notion of big and small contours depends on the running value of
excess area $A$, on the linear size of the system $N$
and on the inverse temperature $\beta$.
Namely, a
contour $\gamma$ is said to be large, respectively small,  if
\be{eq:large-small}
|\gamma|\geq \epsilon_\beta (A) N\quad\text{and, respectively, if}\quad
|\gamma| \leq \frac{1}{\epsilon_\beta (A )}\log N .
\ee
Since we already know that intermediate contours
{
and large $\ominus$-type contours
}
could be ignored,
let us use $\hat{\bbP}_{N , \beta}$ for the restricted ensemble which
contains only  $\oplus$-type large  or  small contours.
\smallskip

\subsection{Irrelevance of Small Contours.}
Let $\Gamma= \Gamma^{l}\cup\Gamma^{s}$ is a compatible collection of contours
with $\Gamma^{l}$ being the corresponding set of large contours of $\Gamma$
and, accordingly, $\Gamma^{s}$ being the collection of small contours of
$\Gamma$. Clearly,
\be{eq:large-length}
\log
{
\bbP_{N, \beta}
}
\lb \abs{\Gamma^{l}} \geq cN\rb \leq -\beta c N
\lb 1-\smo{N}(1)\rb ,
\ee
uniformly in $c$ and all $\beta$ sufficiently large.  Hence, again
by comparison with \eqref{eq:BulkLL} we can ignore all collections
of large contours with total length $\abs{\Gamma^{l}} \geq K_\beta N$.

On the other hand, elementary low density percolation arguments and the
$\pm$-symmetry of height function imply that
\begin{equation}
\label{eq:small}\log {\mathbb{P}}_{N, \beta}\left(  \left\vert \alpha(\Gamma^{s}
)\right\vert \geq b ~\bigm| \Gamma^{l} \right)  \lesssim-
\frac{{\rm e}^{4\beta}b^{2}}%
{N^2}\wedge\frac{\epsilon_\beta b}{\log N  } ,
\end{equation}
uniformly in $\Gamma^{l}$ and in $b\lesssim K_\beta N^{2}$. Again, a comparison with
\eqref{eq:BulkLL} implies that we may restrict attention to the case of
$\left\vert \alpha(\Gamma^{s} )\right\vert \lesssim N^{3/2}$. Such corrections
to the total value of $\Xi$ are invisible on the scales \eqref{anot} we
work with and, consequently, the area shift induced by small contours may be
ignored. \smallskip

\subsection{The Reduced Model of Big Contours.} In the sequel we shall
employ the following notation: $\mathcal{C}$ for clusters of non-compatible
\emph{small} contours and $\Phi_{\beta}(\mathcal{C})$ for the corresponding
cluster weights which shows up in the cluster expansion representation of
partition functions. Note that although the family of clusters $\mathcal{C}$
does depend on the running microscopic scale $N$, the weights $\Phi_{\beta
}(\mathcal{C})$ remains the same. By usual cluster expansion estimates, for
all $\beta$ sufficiently large
\begin{equation}
\label{eq:clusterweight}\sup_{\mathcal{C}\neq\emptyset}
\text{e}^{
{
2 \beta \lb {\rm diam}_\infty \lb
\mathcal{C}\rb +1\rb
}
}
\left\vert \Phi_{\beta}(\mathcal{C}
)\right\vert \lesssim 1 .
\end{equation}
\smallskip

\noindent We are ready now to describe the reduced model which takes into
account only large contours: The probability of a compatible collection
$\Gamma = \left\{  \Gamma_{1} , \dots,\Gamma_{k}\right\}  $ of \emph{large}
contours is given by
\begin{equation}
\label{eq:Plarge}
\mathbb{Q}_{N, \beta} \left(  \Gamma\right) \,
{
\eqvs
}
\,
\mathrm{exp}%
\left\{  -\beta\sum\left\vert \Gamma_{i}\right\vert -
{
\sum_{ \mathcal{C}\subset\Lambda_{N}}
\1_{\lbr \mathcal{C}\not \sim \Gamma\rbr}
}
\Phi_{\beta}(\mathcal{C})\right\}  ,
\end{equation}
The conditional distributions of $\Xi^{v}_N$ and $\Xi^{s}_N$ given such
$\Gamma^l$ are still $\mathrm{Bin}\left(  \left\vert V_{N} (\Gamma)\right\vert ,
p^{v}\right)  $ and $\mathrm{Bin}\left(  \left\vert S_{N} (\Gamma)\right\vert
, p^{s}\right)$, and we shall use $\bbQ_{N, \beta}$ for the corresponding
joint distribution.

For future references let us reformulate the bulk fluctuation bound
\eqref{eq:BulkLL}  in terms of the reduced measure $\bbQ_{N, \beta}$:
For every $K$ fixed
\begin{equation}
\label{eq:BulkLL-Q}
\log\bbQ_{N , \beta}\lb \Xi_N   \geq \rho_\beta N^3 +A N^2\big| \Gamma
\rb
 =
-N{\frac{\lb \delta_\beta  -\frac{\alpha (\Gamma )}{N^2} \rb^2}{2D_\beta}}
%
+ O \left(  \log N\right)  ,
\end{equation}
uniformly
in
$A\leq K$ and $ |\alpha(\Gamma)|\leq KN^{2}$. \smallskip

The notation  for conditional reduced
measures is
\be{eq:QNA}
\bbQ_{N, \beta }^A = \bbQ_{N, \beta }\lb\, \cdot\, \big|\, \Xi_N\geq
\rho_\beta N^3 + AN^2\rb .
\ee
From now on we shall concentrate on proving {\bf A2} of Theorem~\ref{thm:A}
for $\bbQ_{N, \beta}^A$-measures instead of $\bbP_{N, \beta}^A$-measures.
Specifically, we shall prove:
\begin{bigthm}
\label{thm:B}
Assume that bulk occupation probabilities $p_v$ and $p_s$ satisfy
\eqref{eq:p-beta}. Then there exists $\beta_0 <\infty$ such that for any
$\beta >\beta_0$ the following holds: Let  $0 < A_1 (\beta ) <A_2 (\beta )
< A_3 (\beta )  <\dots$ and, respectively
$ 0 < a_1^- (\beta ) < a_1^+ (\beta ) <a_2^- (\beta )
< a_2^+ (\beta ) <\dots $
be the sequences of  as described in
part {\bf A1} of Theorem~\ref{thm:A}.

Then, for any $A\in [0, A_1)$ the set $\Gamma$ of large contours is empty
with $\bbQ_{N, \beta}^A$-probability tending to one. On the other hand,
for any $\ell = 1, 2, \dots$ and for any $A\in (A_\ell , A_{\ell +1})$,
the set $\Gamma$ contains,
 with $\bbQ_{N, \beta}^A$-probability
tending to
one,
exactly $\ell$ large contours $\Gamma = \lbr \Gamma_1, \dots , \Gamma_\ell\rbr$.
Moreover, the rescaled contours from  $\Gamma$
 concentrate
 in Hausdorff distance $\sfd_{\sfH}$  near the optimal stack
 $\lbr \gamma_1^* , \dots , \gamma_\ell^*\rbr $, as described
 in part {\bf A1} of
 Theorem~\ref{thm:A}, in the
 following sense: For any $\epsilon > 0$ ,
 \be{eq:Hausd-stack-Q}
 \lim_{N\to\infty}\bbQ^A_{N, \beta} \lb
 {
 \sum_{i=1}^{\ell-1}
 \sfd_{\sfH} \lb \frac{1}{N}\Gamma_i , \gamma^*_i\rb
 + \min_{x\, :\, x+ \gamma_\ell^^ \subset \bbB_1} \sfd_{\sfH} \lb
 \frac{1}{N}\Gamma_\ell , x+ \gamma^*_\ell\rb
 \geq \epsilon
 }
 \rb = 0.
 \ee
\end{bigthm}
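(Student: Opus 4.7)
The plan is to combine the bulk local limit estimate \eqref{eq:BulkLL-Q} with a surface-tension approximation for the a-priori weight of a configuration of large contours, thereby reducing the asymptotic analysis of $\bbQ_{N,\beta}^A$ to the variational problem studied in Section~\ref{sec:var}. By Bayes' rule, $\bbQ_{N,\beta}^A(\Gamma)$ is proportional to the product of the Bernoulli-based factor $\bbQ_{N,\beta}(\Xi_N \geq \rho_\beta N^3 + AN^2 \mid \Gamma)$ and the interface weight $\bbQ_{N,\beta}(\Gamma)$. The first factor, by \eqref{eq:BulkLL-Q}, contributes $\exp\{-N(\delta_\beta - \alpha(\Gamma)/N^2)^2/(2\sfD_\beta) + O(\log N)\}$, and since $\alpha(\Gamma)$ coincides, up to a negligible small-contour correction, with the signed area of the large contours, only the geometry of $\Gamma^{l}=\{\Gamma_1,\dots,\Gamma_\ell\}$ enters.

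For the a-priori weight I would proceed in two steps. First, the cluster-expansion bound \eqref{eq:clusterweight} combined with the Peierls factor in \eqref{eq:Plarge} absorbs the non-local interaction between well-separated large contours and the cluster-gas term into an $o(N)$ error, so that $-\log\bbQ_{N,\beta}(\Gamma^{l})$ is, to leading order, a sum of single-contour terms. Second, for each $\oplus$-large contour $\Gamma_i$ the low-temperature skeleton calculus of \cite{DKS}, refined in \cite{IST15} to handle contours close to $\partial B_N$, yields matching upper and lower bounds $-\log\bbQ_{N,\beta}(\Gamma_i)=N\taub(\Gamma_i/N)+o(N)$, in terms of the two-dimensional surface tension $\taub$ defined at the start of Section~\ref{sec:Proofs}.

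Combining these two inputs, the weight of $\Gamma$ rescales, for any compatible stack $\calL=\{\Gamma_i/N\}\subset\bbB_1$, to $\exp\{-N\calE_\beta(\calL\mid\delta_\beta)+o(N)\}$ with $\calE_\beta$ as in \eqref{eq:VP-delta}. The analysis of Section~\ref{sec:var} (Theorem~\ref{thm:k-star-slopes}, or Theorem~\ref{thm:VP-vl-2} in the regime $w\leq 2\sigma_\beta$) identifies, for each $A$ in a stability interval $(A_\ell,A_{\ell+1})$, a unique-up-to-compatible-shift minimizing stack $\calL^*=\{\gamma_1^*,\dots,\gamma_\ell^*\}$ with total area $a^*\in(a_\ell^-,a_\ell^+)$. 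Routine large-deviation bookkeeping then shows that configurations whose rescaled stack lies outside any $\epsilon$-Hausdorff neighbourhood of the orbit of $\calL^*$ under admissible shifts contribute total probability of order $\exp(-cN)$, forcing both the count $\ell$ and the concentration statement \eqref{eq:Hausd-stack-Q}.

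The hardest step, and the one where \cite{IST15} is genuinely needed rather than \cite{DKS}, is upgrading an $\bbL_1$-type Wulff convergence to a true Hausdorff bound, uniformly under the constraint that several large $\oplus$-contours are stacked by inclusion and may come arbitrarily close to $\partial\bbB_1$ or to each other. What is required is a quantitative \emph{local} surface-tension lower bound---of the kind used to control Ferrari--Spohn fluctuations in \cite{IST15}---that produces a surface-tension excess of order $N$ for any macroscopic Hausdorff deviation from the nearest Wulff shape or Wulff plaquette, uniformly in the position of the contour inside $\bbB_1$ and in the presence of neighbouring contours. Once such a local bound is in place, splitting the stack into layers, comparing each $\Gamma_i/N$ to $\gamma_i^*$ (with the admissible shift on the topmost contour when $\gamma_\ell^*$ does not touch $\partial\bbB_1$), and invoking the strict convexity of $\calG_\ell^i$ established in the proof of Theorem~\ref{thm:VP-vl-2} to convert the excess-area estimate into a Hausdorff estimate, yields \eqref{eq:Hausd-stack-Q}.
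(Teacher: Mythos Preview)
Your outline correctly identifies the Bayes decomposition, the bulk local limit input \eqref{eq:BulkLL-Q}, and the role of the variational analysis of Section~\ref{sec:var}. The lower bound and the isoperimetric-rigidity input are also essentially what the paper uses. However, there is a genuine gap in your treatment of the a-priori weight, and a related misidentification of where the real difficulty lies.

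You write that the cluster-expansion bound \eqref{eq:clusterweight} ``absorbs the non-local interaction between well-separated large contours \dots\ into an $o(N)$ error'', so that $-\log\bbQ_{N,\beta}(\Gamma^{l})$ is, to leading order, a sum of single-contour terms. This is precisely the step that is \emph{not} automatic. The large contours in a near-optimal stack are not well separated: for type-2 stacks the Wulff plaquettes coincide, and in general nested contours can share $O(N)$ of boundary length. The cross terms $\sum_{\calC}\1_{\calC\nsim\Gamma_u}\1_{\calC\nsim\Gamma_v}\Phi_\beta(\calC)$ are then of order $N$, not $o(N)$, and \eqref{eq:clusterweight} alone gives only an \emph{attractive} correction of that size. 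The paper devotes Subsections~\ref{sub:chromatic-bound}--\ref{sub:kappa} and all of Section~\ref{sec:Two} to showing that this attraction is beaten by entropic repulsion: one first endows the family $\Gamma$ with a graph structure, bounds its edge-chromatic number $\kappa_\calG$ (Proposition~\ref{thm:bad-stacks}, Theorem~\ref{coloring simplex}), uses H\"older to reduce to pairs (\eqref{eq:Gen-Holder}, \eqref{eq:term-i}), and then proves the decoupling bound for two interacting contours (Theorem~\ref{thm:entropic}) via the effective random-walk representation and the recursion \eqref{eq:rho-beta}. Without this machinery your factorization step fails for exactly the configurations that matter.

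Relatedly, you locate the ``hardest step'' in upgrading $\bbL_1$-convergence to Hausdorff convergence and say this is where \cite{IST15} is genuinely needed. In the paper the Hausdorff control is obtained from the isoperimetric rigidity \eqref{eq:Iso-Stack} (generalized Bonnesen from \cite{DKS} and the constrained version from \cite{SS}) together with standard skeleton calculus; \cite{IST15} is invoked not for Hausdorff stability but for the entropic-repulsion estimate \eqref{eq:entropic} that makes the decoupling work. So your plan both underestimates the interaction problem and misattributes the role of \cite{IST15}.
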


\section{Proofs}
\label{sec:Proofs}

\subsection{Surface tension.}
Let us say that a nearest-neighbor path $\gamma$ on $\mathbb{Z}^{2}$
is admissible if it can be a realized as a portion of a level line
of the height function $h_\gamma$ in \eqref{eq:h-fumction}.
Following \eqref{eq:Plarge} we associate with $\gamma$ its {\em free} weight
\begin{equation}
\label{eq:gammaweight}
w_{\beta}^\sff (\gamma) = \text{e}^{-\beta\left\vert
\gamma\right\vert - \sum_{\mathcal{C}\;\not \sim \;\Gamma} \Phi_{\beta
}(\mathcal{C}) }.
\end{equation}
We say that an admissible $\gamma = \lb \gamma_0, \dots , \gamma_n\rb$ is
$\gamma: 0\to  x$ if its end-points satisfy $\gamma_0 = 0$ and $\gamma_n =x$.
The corresponding two-point function and the surface tension are
\begin{equation}
\label{eq:twopoint-st}
G_{\beta}(x)\triangleq\sum_{\gamma: 0\to  x} w_{\beta
}^\sff (\gamma)\quad\text{and}\quad \tau_{\beta}(x) = -\lim_{n \to\infty}\frac
1{n}\log G_{\beta}(\lfloor n x\rfloor).
\end{equation}
Recall that we are considering only sufficiently large values of $\beta$. In
particular, \eqref{eq:clusterweight} applies, and
the surface tension $\taub$ in \eqref{eq:twopoint-st} is well defined.

In the notation of \eqref{eq:gammaweight}
given a large level line $\Gamma\subset B_N$ we define its {\em free} weight
\begin{equation}
\label{eq:weight-free}
w_\beta^{\sff} (\Gamma ) =
{\rm e}^ {  -\beta \left\vert \Gamma\right\vert - \sum_{\mathcal{C}\not \sim \Gamma }
\Phi_{\beta}(\mathcal{C})} .
\end{equation}
In this way the measure $\bbQ_N$ in \eqref{eq:Plarge} describes a gas of
 large level lines which interact between each other and with the boundary $\partial B_N$.
 The statement below  is well understood {(see e.g. [DKS]),}  and it holds for all sufficiently low temperatures:
 \begin{lemma}
  \label{lem:shape-lim}
  Let $\eta \subset \bbB_1$ be a rectifiable Jordan curve. Given any sequence of positive numbers
  $\epsilon_N$ such that $\lim_{N\to \infty}\epsilon_N = 0$ and
  $\lim_{N\to \infty}N\epsilon_N = \infty$,
  the following holds:
  \be{eq:shape-lim}
  \lim_{N\to\infty} \frac{1}{N}  \log \lb \sum_{\Gamma\subset B_N}
  w_\beta^{\sff} (\Gamma)\1_{\lbr \dd_{\sfH}\lb \frac{1}{N}
  \Gamma , \eta \rb \leq \epsilon_N\rbr}\rb  = -\taub (\eta ) .
  \ee
 \end{lemma}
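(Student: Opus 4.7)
The plan is the standard DKS-type skeleton calculus: establish matching upper and lower bounds for $N^{-1}\log$ of the restricted partition function, with the surface tension integral $\taub(\eta)=\int_\eta \taub(\frn_s)\dd s$ emerging as a Riemann sum through the $\taub$-definition in \eqref{eq:twopoint-st}. Throughout, the key technical input is the exponential decay of cluster weights \eqref{eq:clusterweight}, which will be used to factorize the free weight $\wbf(\Gamma)$ over pieces of $\Gamma$ modulo boundary errors that turn out negligible on the relevant scale.

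\textbf{Lower bound $\liminf_{N\to\infty}N^{-1}\log(\cdot)\ge -\taub(\eta)$.} Fix $\delta>0$ and approximate $\eta$ by a polygonal Jordan curve $\eta_\delta$ with vertices $z_0,\dots,z_m\in\bbB_1$ such that $\sfd_\sfH(\eta,\eta_\delta)\le \delta/2$ and $\sum_{i=0}^{m-1}\taub(z_{i+1}-z_i)\le \taub(\eta)+\delta$; this is possible since $\eta$ is rectifiable and $\taub$ is a norm. For each segment I use the definition of $\taub$ in \eqref{eq:twopoint-st} to pick, for $N$ large, admissible paths from $Nz_i$ to $Nz_{i+1}$ whose free weights sum to at least $\exp\{-N\taub(z_{i+1}-z_i)-o(N)\}$; by restricting to paths lying in a thin tube of width $o(\epsilon_N N)$ around the segment (which costs only a subexponential factor, see the corresponding construction in \cite{IST15}), the concatenated loop $\Gamma$ is an admissible level line with $\sfd_\sfH(N^{-1}\Gamma,\eta)\le \epsilon_N$ for $N$ large. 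The free weight of the concatenation factorizes over segments up to a boundary correction $\exp\{O(m)\}$ coming from clusters that straddle adjacent pieces (this is the factorization that follows from \eqref{eq:clusterweight}). Taking $N^{-1}\log$, sending $N\to\infty$ at $\delta$ fixed and then $\delta\downarrow 0$ yields the lower bound.

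\textbf{Upper bound $\limsup_{N\to\infty}N^{-1}\log(\cdot)\le -\taub(\eta)$.} This is the skeleton-calculus step. Place points $y_0,\dots,y_m$ on $\eta$ at mutual arc-length distance of order $\sqrt{\epsilon_N}$, so $m\asymp 1/\sqrt{\epsilon_N}=o(N)$. Every $\Gamma$ with $\sfd_\sfH(N^{-1}\Gamma,\eta)\le \epsilon_N$ must pass within Euclidean distance $\epsilon_N N$ of each $Ny_i$. I sum over the possible sequences $(x_0,\dots,x_m)$ of such visiting points (there are $(C\epsilon_N^2 N^2)^m$ such choices) and, for each sequence, bound the sum of free weights $\wbf(\Gamma)$ by the product $\prod_{i=0}^{m-1}G_\beta(x_{i+1}-x_i)$, again using \eqref{eq:clusterweight} to factorize and absorb boundary cluster contributions into a prefactor $\mathrm{e}^{O(m)}$. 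By the subadditivity and convergence in \eqref{eq:twopoint-st},
\begin{equation*}
\frac{1}{N}\log G_\beta(x_{i+1}-x_i)\le -\taub\!\left(\tfrac{x_{i+1}-x_i}{N}\right)+\smo{N}(1),
\end{equation*}
uniformly over $i$. The sum $\sum_i\taub(N^{-1}(x_{i+1}-x_i))$ is a Riemann sum for $\taub(\eta)$, with an error that vanishes as $\epsilon_N\to 0$ since the skeleton mesh shrinks; the combinatorial prefactor contributes only $O(m\log(N\epsilon_N))=\smo{N}(N)$.

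\textbf{Main obstacle.} The only genuine difficulty is controlling the non-local cluster term $\sum_{\calC\not\sim\Gamma}\Phi_\beta(\calC)$ under the decomposition of $\Gamma$ into pieces: neither the free weight nor the restricted partition function is strictly multiplicative. The resolution is standard but must be executed carefully using \eqref{eq:clusterweight}: clusters incompatible with a concatenation but with no single segment contribute a total energy bounded by $C_\beta m$, which is swallowed by the $o(N)$ terms as long as $m=o(N)$, a condition guaranteed by the choice $m\asymp 1/\sqrt{\epsilon_N}$ together with $N\epsilon_N\to\infty$. Since the rest of the argument is a line-by-line adaptation of the level-line shape lemma proved in \cite{DKS} and revisited in \cite{IST15}, I would only sketch these standard steps and refer the reader there.
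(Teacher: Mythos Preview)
Your proposal is correct and is precisely the DKS skeleton-calculus argument the paper has in mind: note that the paper does not actually prove Lemma~\ref{lem:shape-lim} but merely states that it ``is well understood (see e.g.\ [DKS])'', so your sketch of the polygonal lower bound and the skeleton upper bound, with cluster-weight corrections controlled via \eqref{eq:clusterweight}, is exactly the content being cited. One small point worth tightening in the upper bound: the inscribed polygonal sum $\sum_i\taub(y_{i+1}-y_i)$ is a priori only a \emph{lower} approximation to $\taub(\eta)$, so you should state explicitly that rectifiability of $\eta$ guarantees this sum converges up to $\taub(\eta)$ as the mesh $\sqrt{\epsilon_N}\to 0$, which is what makes the upper bound close.
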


\subsection{Lower bounds on $\bbQ_{N, \beta }\lb \Xi_N  \ge
\rho_\beta N^3 + AN^2\rb$.} Let us apply part {\bf A.1} of Theorem~\ref{thm:A}
for the surface tension $\taub$ defined in \eqref{eq:twopoint-st}
 and bulk occupation probabilities $p_v (\beta ), p_s (\beta )$ which
 satisfy \eqref{eq:p-beta}. Assume that
$A\in \lb A_\ell (\beta ) , A_{\ell +1} (\beta )\rb$ for some $\ell =0, 1, \dots$.
Let $a^* = 0$ if $\ell = 0$ and $a^* \in (a_\ell^- , a_\ell^+ )$ be the optimal
rescaled area as described in Theorem~\ref{thm:A}. Then,
\begin{proposition}
\label{prop:lb-astar}
In the notation of
\eqref{eq:quantities} and
{
\eqref{eq:VP-delta}
}
the following lower bound holds:
\be{eq:lb-astar}
\begin{split}
\liminf_{N\to\infty} \frac{1}{N} \log \bbQ_{N, \beta }\lb \Xi_N \geq
\rho_\beta N^3 + AN^2\rb &\geq
{
 - \min_{\calL} \calE_\beta \lb \calL ~|\delta_\beta \rb =
}
\\
 & = -
\lb\frac{\lb \delta_\beta - a^*\rb^2}{2D_\beta} + \taub (a^* )\rb .
\end{split}
\ee
\end{proposition}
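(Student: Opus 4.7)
The plan is to establish \eqref{eq:lb-astar} by constructing an event $\calA_N$ of large-contour configurations close to the optimal stack from Theorem~\ref{thm:A}, and then using the Bayes-type decomposition
\[
 \bbQ_{N, \beta }\lb \Xi_N \geq \rho_\beta N^3 + AN^2\rb \,\geq\, \bbQ_{N, \beta}(\calA_N)\cdot \inf_{\Gamma \in \calA_N} \bbQ_{N, \beta }\lb \Xi_N \geq \rho_\beta N^3 + AN^2\bigm|\Gamma\rb .
\]
When $A \in [0, A_1)$ (so $\ell = 0$ and $a^* = 0$), I would take $\calA_N = \lbr \Gamma = \emptyset\rbr$: that event has $\bbQ_{N,\beta}$-probability bounded below by a positive constant via \eqref{eq:large-length} and \eqref{c3nu}, while \eqref{eq:BulkLL-Q} specialised at $\alpha(\Gamma) = 0$ furnishes the complementary bound $\exp(-N\delta_\beta^2/(2D_\beta) + O(\log N))$, which is already the right-hand side of \eqref{eq:lb-astar}.

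For $\ell \ge 1$, fix a representative optimal stack $\gamma_1^* \supset \dots \supset \gamma_\ell^*$ provided by Theorem~\ref{thm:A}, translated so that it sits at positive distance from $\partial \bbB_1$; if necessary, shrink the inner curves infinitesimally so that every inclusion is strict, which changes the total area and total surface tension only by $\smof{1}$. Pick $\epsilon_N \to 0$ with $N\epsilon_N \to \infty$ and set
\[
 \calA_N \,=\, \lbr \Gamma = \lbr \Gamma_1, \dots , \Gamma_\ell\rbr : \Gamma \text{ is a nested stack with } \sfd_{\sfH}\lb \tfrac{1}{N}\Gamma_i , \gamma_i^* \rb \leq \epsilon_N\ \forall\, i\rbr .
\]
Applying Lemma~\ref{lem:shape-lim} contour by contour and exploiting that the $\gamma_i^*$ stay at macroscopic distance from each other and from $\partial \bbB_1$, I would show that the inter-contour and boundary cluster corrections in \eqref{eq:Plarge} factorise to exponential precision, so that
\[
 \log \bbQ_{N, \beta}(\calA_N) \,\geq\, -N \sum_{i=1}^\ell \taub(\gamma_i^*) + \smof{N} \,=\, -N\taub(a^*) + \smof{N} .
\]
On $\calA_N$ one has $\alpha(\Gamma)/N^2 = a^* + \smof{1}$, so \eqref{eq:BulkLL-Q} delivers the conditional lower bound $\exp(-N(\delta_\beta - a^*)^2/(2D_\beta) + \smof{N})$ uniformly over $\Gamma\in\calA_N$; multiplying the two estimates proves \eqref{eq:lb-astar}.

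The main obstacle is precisely the factorisation step underlying the estimate $\log \bbQ_{N,\beta}(\calA_N) \ge -N\taub(a^*) + \smof{N}$. Two technical points have to be handled: first, that forcing the large-contour set to consist of \emph{exactly} $\ell$ nested pieces (with nothing else in $B_N$) costs only $\smof{N}$ in the exponent, which follows from \eqref{eq:large-length} combined with \eqref{c3nu}; and second, that the cluster cross-terms $\Phi_\beta(\calC)$ coupling distinct contours of the stack, or coupling the stack to $\partial B_N$, are exponentially damped in their diameters by \eqref{eq:clusterweight} and therefore contribute only $\smof{N}$ in total, thanks to the built-in macroscopic separation of the $\gamma_i^*$. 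Both of these are standard outputs of the DKS/\cite{IST15} contour machinery that the paper announces it will invoke.
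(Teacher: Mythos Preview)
Your approach is essentially the paper's: restrict to configurations $\Gamma$ whose rescaled level lines sit near the optimal stack, factorise the $\bbQ_{N,\beta}$-weight into free single-contour weights using the cluster decay \eqref{eq:clusterweight}, apply Lemma~\ref{lem:shape-lim} contour by contour, and finish with the bulk estimate \eqref{eq:BulkLL-Q}.

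One point deserves care. When the optimal stack is of type~2, all the $\gamma_i^*$ coincide with the same Wulff plaquette $\sfP_b$, which by construction touches all four sides of $\bbB_1$; so you cannot ``translate it to positive distance from $\partial\bbB_1$'', and an \emph{infinitesimal} shrinking of the inner curves does not by itself produce the macroscopic separation you subsequently invoke for the decoupling. The paper handles this uniformly by placing each $\Gamma_j$ inside a concentric annular tube
\[
A_N^j \;=\; (1-(1+3(j-1))\epsilon_N)\,N\mathring{\gamma}_j^* \setminus (1-(2+3(j-1))\epsilon_N)\,N\mathring{\gamma}_j^* .
\]
These tubes are automatically at pairwise distance $\geq c_1 N\epsilon_N$ from one another and from $\partial B_N$, even when the $\gamma_j^*$ coincide, so the factorisation bound drops out directly from \eqref{eq:clusterweight}; moreover the $\epsilon_N\to 0$ shrinkage is absorbed in the $N\to\infty$ limit without a separate $\delta\to 0$ diagonal step. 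With this adjustment your argument lines up with the paper's.
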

\begin{proof}
If $a^*=0$, then the claim follows from \eqref{eq:BulkLL-Q}. \\
Assume that $a^* \in (a_\ell^- , a_\ell^+ )$ for some $\ell\geq 1$.  Let
$\gamma_1\subseteq \gamma_2 \subseteq \dots \subseteq \gamma_\ell\subset \bbB_1$ be the optimal
stack as described in Theorem \ref{thm:k-star-slopes}.  Pick a sequence $\epsilon_N$ which satisfies
conditions of Lemma~\ref{lem:shape-lim} and, for $J=1, \dots , \ell$ define tubes
\[
A_N^j =   (1-  (1+ 3 (j-1))\epsilon_N )N\mathring{\gamma}_j\setminus
(1-  (2+ 3 (j-1))\epsilon_N )N\mathring{\gamma}_j .
\]
Lemma~\ref{lem:shape-lim} implies that for any $j =1, \dots , \ell$
\be{eq:j-tube}
\lim_{N\to\infty} \frac{1}{N}  \log \lb \sum_{\Gamma_j\subset A_N^j} w_\beta^{\sff} (\Gamma_j )
 \rb  = -\taub (\gamma_j ) .
  \ee
By construction $A_N^j$-s are disjoint and there exists $c_1 = c_1 (\beta , a^* ) >0$ such that
\be{eq:dist-AN}
\min_{1\leq j \ell}\dd_{\sfH}  (A_N^{j-1} , A_N^{j} ) \geq c_1 N\epsilon_N \,
\ee
where we put $A_N^{0} =\partial B_N$. Hence, in view of \eqref{eq:clusterweight},
\be{eq:tube-correction}
\mathrm{exp}
\left\{  -\beta\sum_{j=1}^\ell \left\vert \Gamma_{j}\right\vert - \sum
_{\substack{{\mathcal{C}\not \sim \Gamma }\\{\mathcal{C}\subset\Lambda_{N}}}}
\Phi_{\beta}(\mathcal{C})\right\} \geqs
{\rm e}^{-c_2 \ell N {\rm e}^{-2c_1 N\epsilon_N}}
\prod_1^\ell w_\beta^{\sff} (\Gamma_j )
\ee
for any collection $\Gamma = \lb\Gamma_1  , \dots , \Gamma_\ell \rb$ of level lines
satisfying
$\Gamma_j \subset A_N^{j}$ for $j=1, \dots , \ell$.

Note also that, for any $j=1, \dots \ell$,  if a large level line $\Gamma_j \subset A_N^j$, then
\be{eq:tube-area}
N^2 \sfa (\gamma_j ) (1- 3\ell\epsilon_N )^2 \leq \sfa (\Gamma_j ) \leq N^2 \sfa (\gamma_j ) .
\ee
Hence, by \eqref{eq:BulkLL-Q},
\begin{equation}
\label{eq:Bulk-lb}
\frac{1}{N} \log\bbQ_{N , \beta}\lb \Xi_N  \geq  \rho_\beta N^3 +A N^2\big| \Gamma
\rb
\geq
-{\frac{\lb \delta_\beta  - a^*  \rb^2}{2D_\beta}} - \frac{6\delta_\beta^2 \ell\epsilon_N}{D_\beta} ,
\end{equation}
for any collection $\Gamma = \lb\Gamma_1 , \dots , \Gamma_\ell \rb$ of such level lines.

Putting \eqref{eq:j-tube}, \eqref{eq:tube-correction} and \eqref{eq:Bulk-lb} together
(and recalling that $\epsilon_N$ was chosen to satisfy conditions of
Lemma~\ref{lem:shape-lim})
we deduce \eqref{prop:lb-astar}.
\end{proof}
\subsection{Strategy for proving upper bounds.}
Below we explain the strategy which we employ for proving
\eqref{eq:Hausd-stack-Q}. For the rest of the section let us assume that a sufficiently large
$\beta$ and an excess area value $A \in \lb A_\ell (\beta ), A_{\ell +1} (\beta )\rb$ are fixed.
\smallskip

\noindent
\step{1} (Hausdorff distance and Isoperimetric rigidity).
We shall employ the same notation $\dd_\sfH$ for two and three dimensional Hausdorff
distances.

Given a family $\calL$ of compatible loops define the height function $h[\calL] :\bbB_1\mapsto \bbN$,
\be{eq:height-f}
h[\calL] (y ) = \sum_{\gamma\in\calL}
\1_{\lbr y \in \mathring{\gamma}\rbr} .
\ee
In the case of  the optimal stack (recall that we fixed $\beta$ and $A$, so the latter notion is well defined)
$\calL^*  = \lb \gamma_1^* , \dots , \gamma_\ell^*\rb $ as described in
Theorem \ref{thm:k-star-slopes}, we say that $x$ is admissible if $x+\gamma^*_\ell\subset \bbB_1$.
If $x$ is admissible, then
we use $h^*_x = h^*_x (A , \beta )$ for the height function of
$\lb \gamma_1^* , \dots , x+ \gamma_\ell^*\rb $.
Of course if $\calL^*$ is 
{of type-2,}  the only admissible $x$ is $x=0$.

We can think about $h[\calL ]$ in terms of its epigraph, which is a three dimensional subset
of {$\bbB_1\times\bbR_{{+}}$.}  In this way the notion of Hausdorff distance
$\dd_{\sfH} \lb h[\calL ] , h^*_x\rb$ is well defined.

{
We will need the qualitative stability properties of the minima of the functional
$$
{
\calE_\beta \lb \calL~|~ \delta_\beta \rb =
}
\sum_{\gamma\in\calL} \taub (\gamma ) + \frac{(\delta_\beta - \sfa (\calL ))^2}{2 D_\beta }.
$$
Namely, we claim that  for every $\nu >0$ there
exists $\rho = \rho_\beta  (\nu , A) >0 $ such
that
\be{eq:Iso-Stack}
{
\calE_\beta \lb \calL~|~ \delta_\beta \rb
}
>  \taub (a^* ) + \frac{(\delta_\beta - a^*)^2}{2 D_\beta } + \rho ,
\ee
whenever {$\min_{x} \dd_{\sfH} \lb h[\calL ], h^*_x \rb >\nu$.} Such stability properties are known for the Wulff and
constrained Wulff isoperimetric problems. That is, if  $\sf W$ is the Wulff loop with area $a\left(  \sf W\right)  $ inside,
see Section 3.2, and $\gamma$ is a loop with the same area, $a\left(
\gamma\right)  =a\left(  \sf W\right)  ,$ which satisfies $\min_{x}\dd_{\sfH}\left(
\gamma,\sf W+x\right)  >\nu,$ then for some $\rho=\rho\left(  \nu\right)  >0$ we
have%
\[
\tau\left(  \gamma\right)  \geq\tau\left(  \sf W\right)  +\rho.
\]
That is the content of the \textit{generalized Bonnesen inequality}, proven in
\cite{DKS}, see relation 2.4.1 there. The same stability property holds for
the constrained case, that is when we impose the additional constrain that the
loop $\gamma$ has to fit inside a square, and where the role of the Wulff
shape $\sf W$ is replaced by the Wulff plaquette $\sf P,$ see \cite{SS} for more
details. We will show now that the above quantitative stability of the
surface tension functional implies the quantitative stability of the
functional $\mathcal{E}_{\beta}\left(  \mathcal{\ast}{\Huge |}\delta_{\beta
}\right)  .$
}

{
Let us prove \eqref{eq:Iso-Stack}. Suppose $\dd_{\sfH}$ is bigger than $\nu.$ There can be
several reasons for that. The simplest is that the number of levels in the
epigraph of $h\left[  \mathcal{L}\right]  $ is different from that for
$h^{\ast}.$ (In which case the Hausdorff distance in question is at least $1$.)
In order to estimate the discrepancy $\rho$ in that case we have to look for
the minimizer of the functional $\mathcal{E}_{\beta}\left(  \mathcal{L}%
{\Huge |}\delta_{\beta}\right)  $ under additional constraint on $\mathcal{L}$
to have a different number of levels than the optimal stack $h^{\ast}.$ Let
$\mathcal{E}_{\beta}^{w}\left(  \delta_{\beta}\right)  $ be the
\textit{minimal value} of the functional $\mathcal{E}_{\beta}\left(
\mathcal{L}{\Huge |}\delta_{\beta}\right)  $ over these `wrong' loop
configurations. The function $\mathcal{E}_{\beta}^{w}\left(  \delta_{\beta
}\right)  ,$ as a function of $\delta_{\beta},$ is piecewise continuous, as is
the true optimal value $\mathcal{E}_{\beta}\left(  \delta_{\beta}\right)
=\tau_{\beta}\left(  a^{\ast}\right)  +\frac{\left(  \delta_{\beta}-a^{\ast
}\right)  ^{2}}{2D_{\beta}}$ (where $a^{\ast}=a^{\ast}\left(  \delta_{\beta
}\right)  $ is the optimal area corresponding to the excess value
$\delta_{\beta}).$ The difference $\mathcal{E}_{\beta}^{w}\left(
\delta_{\beta}\right)  -\mathcal{E}_{\beta}\left(  \delta_{\beta}\right)  $ is
continuous and non-negative, and it vanishes precisely at the values
$\delta_{\beta}$ corresponding to critical values $A_{\ell}\left(
\beta\right)  $ of the parameter $A.$ Therefore the difference $\mathcal{E}%
_{\beta}^{w}\left(  \delta_{\beta}\right)  -\mathcal{E}_{\beta}\left(
\delta_{\beta}\right)  \equiv\rho_{0}\left(  A,\beta\right)  $ is positive for
our fixed value $A\in\left(  A_{\ell}\left(  \beta\right)  ,A_{\ell+1}\left(
\beta\right)  \right)  .$
}

{
Let now the number of levels $l$ in the epigraph of $h\left[  \mathcal{L}%
\right]  $ is $l\left(  h^{\ast}\right)  $ -- i.e. the same as that for
$h^{\ast}.$  Let $\mathcal{L}_{a}$ be the minimizer of $\mathcal{E}_{\beta
}\left(  \mathcal{L}{\Huge |}\delta_{\beta}\right)  $ over all the families
with $l\left(  h^{\ast}\right)  $ levels and with $a\left(  \mathcal{L}%
\right)  =a.$ If our loop collection $\mathcal{L}$ is far from the optimal
stack $\mathcal{L}_{a^{\ast}}$ -- i.e. if $\min_{x}\dd_{\sfH}\left(  h\left[
\mathcal{L}\right]  ,h_{x}^{\ast}\right)  >\nu$ -- as well as from all other
stacks: $\min_{x}\dd_{\sfH}\left(  h\left[  \mathcal{L}\right]  ,h\left[
\mathcal{L}_{a}\right]  _{x}\right)  >\frac{\nu}{10},$ then our claim (5.12)
follows just from the stability properties of the surface tension functional
$\tau\left(  \ast\right)  .$ If, on the other hand, $\mathcal{L}$ is far from
the optimal stack $\mathcal{L}_{a^{\ast}}$, i.e.  $\min_{x}\dd_{\sfH}\left(
h\left[  \mathcal{L}\right]  ,h_{x}^{\ast}\right)  >\nu,$ but it is close to
some other stack $\mathcal{L}_{\bar{a}}$ from `wrong area stacks': $\min
_{x}\dd_{\sfH}\left(  h\left[  \mathcal{L}\right]  ,h\left[  \mathcal{L}_{\bar{a}%
}\right]  _{x}\right)  <\frac{\nu}{10},$ then we are done, since in that case
$\dd_{\sfH}\left(  h\left[  \mathcal{L}_{a^{\ast}}\right]  ,h\left[  \mathcal{L}%
_{\bar{a}}\right]  \right)  >\frac{9\nu}{10},$ and we already know the
minimizer of $\mathcal{E}_{\beta}\left(  \mathcal{\ast}{\Huge |}\delta_{\beta
}\right)  $ over the set $\left\{  \mathcal{L}:\min_{x}\dd_{\sfH}\left(  h\left[
\mathcal{L}\right]  ,h\left[  \mathcal{L}_{\bar{a}}\right]  _{x}\right)
<\frac{\nu}{10}\right\}  $ to be far from $\mathcal{L}_{a^{\ast}}.$
}

\smallskip

\noindent
\step{2} (Upper bound on the number of large contours, compactness considerations
{and skeleton calculus}).
In principle there should be a nice way to formulate and prove large deviation results
 using compactness of the space of closed connected subsets of $\bbB_1$ endowed with
 Hausdorff distance, see \cite{Cerf}. However, as in the latter work, we shall eventually
 resort to skeleton calculus developed in \cite{DKS}.

By
\eqref{eq:Plarge} and \eqref{eq:clusterweight},
\be{eq:limsup-lengths}
\limsup_{N\to\infty}
\frac{1}{N} \log \bbQ_{N, \beta}\lb\sum \abs{\Gamma_i } \geq KN\rb \leq
-K\beta \lb 1 - \bigof{{\rm e}^{-{4} \beta}} \rb .
\ee
In view of Proposition~\ref{prop:lb-astar} we can reduce attention to families $\Gamma$ of
large contours which satisfy $\sum \abs{\Gamma_i } \leq K_\beta  ( A )N$ for some $K_\beta  ( A )$
large enough. By \eqref{eq:large-small} this means that we can restrict attention to
collections $\Gamma$ of large contours whose cardinality is at most
{
\be{eq:card-large}
\#
\lbr \Gamma_i \in \Gamma
\rbr \leq
n_\beta (A) =  K_\beta  ( A )/\epsilon_\beta (A)
\ee
}
Given a compatible collection
$\Gamma$ of large contours we can talk about the rescaled surface
$h_N [\Gamma ]:= h\left[\frac{1}{N}\Gamma \right]$ and,
accordingly,
consider events
\be{eq:nu-event}
\min_{x} \dd_{\sfH} \lb h_N [\Gamma ], h^*_x (A, \beta )  \rb >\nu .
\ee

{Fix a sequence  $\epsilon_N$ satisfying conditions of Lemma~\ref{lem:shape-lim} and
 set $\ell_N = N\epsilon_N$. Consider a large contour $\Gamma_i \in \Gamma$. By
\eqref{eq:card-large} there are at most $n_\beta (A)$ such contours, and each of them
has a bounded length $\abs{\Gamma_i}\leq KN$. In view of south-west splitting rules
we can view $\Gamma_i$ as a parameterized nearest neighbor loop
$\Gamma_i = \lbr u_0, \dots , u_n = u_0\rbr$ with $n\leq KN$.
\smallskip

\noindent
{\bf Definition}.  The $\ell_N$-skeleton
$\gamma_i$ of $\Gamma_i$ is defined as follows: Set $N\sfu_0 = u_0$ and $\tau_0= 0$.
Then, given $k=0, 1, \dots$ with $\sfu_k$ and $\tau_k$ already defined set
\be{eq:skel-step}
\tau_{k+1} = \min \lbr m >\tau_k ~:~ \abs{u_m - N \sfu_k}_1 > \ell_N\rbr \quad {\rm and}\quad
N\sfu_{k+1} = u_{\tau_{k+1}} ,
\ee
provided $\lbr m >\tau_k ~:~\abs{u_m - \sfu_k} > \ell_N\rbr\neq\emptyset$. Otherwise stop
and set $\gamma_i \subset \bbB_1$ to be the polygonal approximation through the vertices
$\sfu_0, \dots , \sfu_{k} , \sfu_0$. \qed

We write $\Gamma_i \stackrel{\ell_N}{\sim} \gamma_i$
 and, accordingly, $\Gamma \stackrel{\ell_N}{\sim} \calS$, if
 $\calS = \lb\gamma_1, \gamma_2 , \dots \rb$ is a collection of $\ell_N$ skeletons
 compatible with family $\Gamma$ of large contours. Since $\abs{\Gamma_i}\leq KN$
 any compatible $\ell_N$ skeleton $\gamma_i$ has at most $\frac{K}{\epsilon_N}$ vertices.
 Therefore, there are at most
 \be{eq:skel-number}
 \lb \lb {N^2}\rb ^{ \frac{K}{\epsilon_N}}\rb^{n_\beta (A )} = {\rm exp}\lbr
 \frac{2K K_\beta  ( A )\log N}{\epsilon_N \epsilon_\beta (A)}\rbr  = {\rm e}^{\smo{N} (1) N}
 \ee
 different collections of $\ell_N$-skeletons to consider. Thus the entropy of the skeletons does not present an issue, and it would suffice to give
 uniform upper bounds on
 \be{eq:upper-target}
 \bbQ_{N , \beta}^A\lb
 \min_{x} \dd_{\sfH} \lb h_N [\Gamma ], h^*_x (A, \beta )  \rb >\nu ;
 \Gamma \stackrel{\ell_N}{\sim} \calS
 \rb
 \ee
 for fixed $\ell_N$-skeleton collections $\calS$.
}

If $\gamma \subset \bbB_1$ is a closed polygon -- for instance an $\ell_N$-skeleton
of some large contour -- then its surface tension
$\taub (\gamma )$ and its signed area $\sfa (\gamma )$ are  well defined.
Accordingly one defines $\taub (\calS ) = \sum_i \taub (\gamma_i )$ and
$\sfa\lb \calS\rb = \sum_i \sfa (\gamma _i )$ for finite collections
$\calS$
of polygonal lines.
{
We apply now the isoperimetric rigidity
bound \eqref{eq:Iso-Stack}: For every $\nu >0$ there
exists $\rho = \rho_\beta  (\nu , A) >0 $ such
that for all $N$ sufficiently large the following holds:
\be{eq:Iso-Stack-skel}
{
\calE_\beta \lb \calS~|~ \delta_\beta \rb =
}
\sum_{\gamma\in\calS} \taub (\gamma ) + \frac{(\delta_\beta - \sfa (\calS ))^2}{2 D_\beta }
>  \taub (a^* ) + \frac{(\delta_\beta - a^*)^2}{2 D_\beta } + \rho ,
\ee
whenever $\calS$ is an $\ell_N$-skeleton; $\calS\stackrel{\ell_N}{\sim}\Gamma$  of
a collection $\Gamma$ of large contours which satisfies \eqref{eq:nu-event}.
}

{
For $n \leq n_\beta (A )$ (see \eqref{eq:card-large}) and $\rho >0$
consider the collection $\frS_N (\rho )$ of  families of $\ell_N$-skeletons
$\calS = (\gamma_1 , \dots , \gamma_n )$ which satisfy  \eqref{eq:Iso-Stack-skel}.
}

Then  \eqref{eq:Hausd-stack-Q} would be a consequence
of the following statement:
\begin{theorem}
 \label{thm:UB-L-net}
There exists a positive function $\alpha$ on $(0, \infty )$ such that the following happens:
Fix $\beta$ sufficiently large and let $A$ be as in the conditions of Theorem~\ref{thm:B}.
Then for any $\rho >0$ fixed,
\be{eq:UB-L-net}
 \max_{\calS \in \frS_N (\rho )} \frac{1}{N}  \log \bbQ_{\beta , N}^A
 \lb
 \Gamma \stackrel{\ell_N}{\sim} \calS
 \rb  <  - \alpha (\rho ) ,
 \ee
 as soon as $N$ is sufficiently large.
 \end{theorem}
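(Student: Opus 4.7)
The plan is to use a Bayes-type decomposition
\begin{equation*}
\bbQ_{N,\beta}^A\lb \Gamma \stackrel{\ell_N}{\sim} \calS \rb = \frac{\sum_{\Gamma \stackrel{\ell_N}{\sim} \calS} \bbQ_{N,\beta}(\Gamma)\, \bbQ_{N,\beta}\lb \Xi_N \geq \rho_\beta N^3 + A N^2 \bigm| \Gamma \rb}{\bbQ_{N,\beta}\lb \Xi_N \geq \rho_\beta N^3 + A N^2\rb},
\end{equation*}
match an upper bound on the numerator against the lower bound of Proposition~\ref{prop:lb-astar} on the denominator, and identify the resulting exponential gap with the defining inequality of $\frS_N(\rho)$. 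The denominator is already bounded from below by $\exp\lbr -N\calE_\beta(\calL^*~|~\delta_\beta) + \smo{N}(1)N\rbr$, so only the numerator requires work.

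For the numerator I treat its two factors separately. Every $\gamma_i\in\calS$ has at most $K/\epsilon_N$ vertices and edges of length $\leq \ell_N$, so any $\Gamma \stackrel{\ell_N}{\sim}\calS$ satisfies $|\sfa(\Gamma)/N^2 - \sfa(\calS)|\leq C\, n_\beta(A)\, \ell_N /N = \smo{N}(1)$, and the bulk bound \eqref{eq:BulkLL-Q} therefore gives
\begin{equation*}
\log \bbQ_{N,\beta}\lb \Xi_N \geq \rho_\beta N^3 + A N^2 \bigm| \Gamma\rb \leq -N\,\frac{(\delta_\beta - \sfa(\calS))^2}{2 D_\beta} + \smo{N}(1) N
\end{equation*}
uniformly in $\Gamma\stackrel{\ell_N}{\sim}\calS$. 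For the sum of $\bbQ$-weights I invoke the DKS skeleton calculus (as adapted in \cite{IST15}): first rewrite $\bbQ_{N,\beta}(\Gamma)$ as $\prod_i \wbf(\Gamma_i)$ up to a cluster-interaction correction collecting clusters $\calC$ simultaneously incompatible with two or more $\Gamma_i$; then apply the upper-bound half of Lemma~\ref{lem:shape-lim} edge-by-edge along each skeleton polygon $\gamma_i$. This yields
\begin{equation*}
\sum_{\Gamma \stackrel{\ell_N}{\sim}\calS} \bbQ_{N,\beta}(\Gamma) \leq \exp\lbr -N\, \taub(\calS) + \smo{N}(1) N \rbr.
\end{equation*}

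Combining the three ingredients and using the definition of $\frS_N(\rho)$,
\begin{equation*}
\frac{1}{N}\log \bbQ_{N,\beta}^A\lb \Gamma \stackrel{\ell_N}{\sim} \calS \rb \leq -\calE_\beta(\calS~|~\delta_\beta) + \calE_\beta(\calL^*~|~\delta_\beta) + \smo{N}(1) < -\rho + \smo{N}(1),
\end{equation*}
so that $\alpha(\rho) = \rho/2$ works for all $N$ large enough, uniformly in $\calS\in\frS_N(\rho)$. The exponentially large cardinality of $\frS_N(\rho)$ from \eqref{eq:skel-number} is harmless since it is only subexponential in $N$.

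The main technical obstacle is controlling the cluster-interaction correction uniformly in $\calS$: in near-optimal stacks different layers can sit macroscopically (even microscopically) close to one another, and a priori clusters incompatible with several $\Gamma_i$ simultaneously could generate an $O(N)$ error rather than $\smo{N}(1)\, N$. Resolving this requires using the exponential decay \eqref{eq:clusterweight} in conjunction with the uniform bounds \eqref{eq:card-large} on the number of large contours and \eqref{eq:limsup-lengths} on their total length, together with the $\ell_N$-scale separation encoded in the skeleton construction, to bound the number of cluster-bridging events and secure the $\smo{N}(1) N$ error uniformly over $\frS_N(\rho)$.
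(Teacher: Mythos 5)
Your Bayes decomposition, the use of Proposition~\ref{prop:lb-astar} for the denominator, and the uniform bulk-fluctuation bound on the numerator factor are all on track and match the paper's strategy. However, your claimed upper bound on the contour-weight factor,
\[
\sum_{\Gamma\stackrel{\ell_N}{\sim}\calS}\bbQ_{N,\beta}(\Gamma)\;\leq\;\exp\lbr -N\,\taub(\calS)+\smo{N}(1)\,N\rbr ,
\]
is both too strong and unjustified by the tools you invoke. The condition $\Gamma\stackrel{\ell_N}{\sim}\gamma$ only pins $\Gamma$ at skeleton vertices spaced $\ell_N$ apart; it does not confine $\Gamma$ to an $\epsilon_N$-tube around the polygon $\gamma$, so the ``upper bound half of Lemma~\ref{lem:shape-lim} applied edge-by-edge'' simply does not apply. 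The correct single-contour skeleton bound is the DKS estimate \eqref{eq:DKS-one}, namely $\exp(-N[\taub(\sfa(\gamma))+\alpha_0(\Omega_\beta(\gamma))])$, which is strictly weaker than $\exp(-N\taub(\gamma))$ because $\alpha_0(x)\leq x$; the paper has to run a two-case argument (large versus small $\sum_u\Omega_\beta(\gamma_u)$) against \eqref{eq:Iso-Stack-skel} to extract the gap $\alpha(\rho)$ from this weaker bound.

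The second, and deeper, gap is the one you flagged but then dismissed: the cluster correction coupling distinct large contours is not an $\smo{N}(1)\,N$ error to be absorbed. Near-optimal stacks have layers that can run parallel at microscopic separation over $O(N)$ length, producing an \emph{attractive} cluster interaction of order $N$ which, a priori, lowers the effective pairwise surface tension. Handling this is the content of Theorem~\ref{thm:entropic} and the entire Section~\ref{sec:Two}: one must show, via the effective random walk / renewal representation and the careful estimate of Lemma~\ref{lem:sum}, that for $\chi>1/2$ the entropic repulsion built into the compatibility constraint $\Gamma_u\sim\Gamma_v$ beats the attraction coming from $\kappa_\beta\sum_{\calC\not\sim e}|\Phi_\beta(\calC)|$. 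Before one can even reduce to pairs, one needs the graph structure on collections of large contours, the edge-chromatic-number bound $\kappa_\calG\leq\kappa_\beta$ of Proposition~\ref{thm:bad-stacks}, and the generalized H\"older step \eqref{eq:Gen-Holder}--\eqref{eq:term-i}. None of this follows from ``exponential decay of $\Phi_\beta$ plus a bound on the number of contours and $\ell_N$-scale separation''; in particular the skeleton scale $\ell_N$ does \emph{not} keep distinct contours apart, which is exactly why the random-walk analysis is needed. As written, your proposal relocates the hard part of the theorem into an unproven assertion.
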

Similar upper bounds were derived in \cite{DKS} for collections consisting
of one large and several small skeletons.
Here we have somewhat more delicate situation, since we need to control the weights of stacks  of almost optimal
contours, which are interacting. This  requires additional tools and efforts.

%
\smallskip

\noindent
\step{3} (Refined estimates in terms of graph structure of $\Gamma$).

Let us elaborate on the upper bounds derived in \cite{DKS}. Consider the ensemble
of (single) large microscopic loops $\Gamma$ with weights $w_\sff^\beta (\Gamma )$ as in
\eqref{eq:weight-free}.  Given a (polygonal) skeleton $\gamma\subset \bbB_1$ define
\[
w^\beta_\sff \lb \Gamma \stackrel{\ell_N}{\sim} \gamma\rb :=
\sum_{\Gamma \stackrel{\ell_N}{\sim} \gamma} w_\sff^\beta (\Gamma ).
\]
More generally, given a function $F( \Gamma_1, \Gamma_2,...)$ we put
\[
 \bigotimes_i w_\beta^{\sff} \lb F( \Gamma_1, \Gamma_2,...)  \rb
 :=
 \int \bigotimes_i w_\beta^{\sff} \lb  d\Gamma_i  \rb F( \Gamma_1, \Gamma_2,...).
\]
Upper bounds
derived in \cite{DKS} imply that here exists a positive non-decreasing
function $\alpha_0$ on $(0,\infty)$ such that the following happens: Fix $a_0 >0$.
Given a closed polygon $\gamma$, 
define
its excess surface tension
\be{eq:Omega-diff}
\Omega_\beta (\gamma ) = \taub (\gamma ) - \taub \lb \sfa (\gamma )\rb .
\ee
Then, for all $N$ and $\beta$ sufficiently large,
\be{eq:DKS-one}
\frac{1}{N}\log w^\beta_\sff \lb \Gamma \stackrel{\ell_N}{\sim} \gamma\rb
< -\lb  \taub \lb \sfa ( \gamma ) \rb +  \alpha_0 \lb \Omega_\beta (\gamma )\rb\rb
\lb 1 -  \smo{N} (1) \rb
\ee
uniformly in  $\sfa (\gamma ) > a_0$ . This estimate,
is explained in the beginning of Subsection~\ref{sub:DKS}.

{
Should we be able to bound  $\bbQ_{\beta , N}
\lb \Gamma\stackrel{\ell_N}{\sim} \calS\rb $
by  product weights
\[
 \bigotimes w_\beta^{\sff} \lb \Gamma\stackrel{\ell_N}{\sim} \calS\rb
 := \prod_{\gamma_{i}\in\mathcal{S}}
 w^\beta_\sff \lb \Gamma_i \stackrel{\ell_N}{\sim} \gamma_i \rb ,
 \]
then \eqref{eq:DKS-one} and   Proposition~\ref{prop:lb-astar}  would readily
imply \eqref{eq:UB-L-net}.
However, due to cluster sharing in \eqref{eq:Plarge} and due to confined geometry of
clusters;
$ \calC \subset B_N$, contours in $\Gamma$ do interact both between
each other and with
$\partial B_N$, which a priori may lead to a modification of surface tension.
Therefore, one should proceed with care.
}

A compatible collection $\Gamma = \lbr \Gamma_v\rbr_{v\in \calV}$ of large level lines has a natural
graph structure: Namely let us say that $\Gamma_u \sim \Gamma_v$ if there is a continuous path
in $\bbR^2$ which connects between $\Gamma_u$ and $\Gamma_v$ without hitting any other element of
$\Gamma$. This  {notion of hitting}
is ambiguous because by construction different $\Gamma_v$-s may share
bounds or even coincide. We resolve this ambiguity as follows: If there is a strict inclusion
$\mathring{\Gamma}_u \subset \mathring{\Gamma}_v$, then any path from the infinite component
of $\bbR^2\setminus \Gamma_v$
to $\Gamma_u$   by definition hits $\Gamma_v$. If $\Gamma_{v_1}= \dots = \Gamma_{v_k}$,
then we fix an ordering and declare that any path from the infinite
component of $\bbR^2\setminus \Gamma_{v_1}$ to
$\Gamma_{v_j}; \, j>1, $ hits $\Gamma_{v_i}$ for any $i < j$.

In this way we label collections $\Gamma$ of large level lines by
finite graphs $\calG = \lb\calV , \calE\rb $.
If $\calS = \lbr \gamma_u\rbr$
is a family of $\ell_N$-skeletons of $\Gamma$ (meaning that
$\Gamma_u \stackrel{\ell_N}{\sim} \gamma_u$ for every $u\in \calV$, then by
definition $\calS$ has the same graph structure.

 We write $\Gamma \in \calG$ if $\calG$ is the above graph of $\Gamma$.
The chromatic number of this graph plays a role. In
Subsection~\ref{sub:chromatic-bound} we show how, once the chromatic number is
under control,
to reduce
complex many-body interactions in
$\bbQ_{\beta , N}
\lb \Gamma\stackrel{\ell_N}{\sim}\calS\rb $
 to upper bounds on pairs of interacting contours.
\smallskip

\noindent
\step{4} (Entropic repulsion versus interaction). In Subsection~\ref{sub:two}
we
formulate  decoupling upper bounds for two interacting contours.
In view of these bounds \eqref{eq:clusterweight} implies
 that
at sufficiently low temperatures entropic repulsion always
beats 
{our weak}
interactions. The proof  is relegated
to Section~\ref{sec:Two}.
\smallskip

\noindent
\step{5} (Bounds on chromatic numbers).
 In Subsection~\ref{sub:kappa}
 we derive exponential upper bounds on chromatic numbers, which enable
 reduction to  the decoupling estimates for pairs of contours.
\smallskip

\subsection{A chromatic number upper bound on a collection of large contours.}
\label{sub:chromatic-bound}
Let $\calG = \lb\calV , \calE\rb $ be a finite graph, for instance
associated to a collection $\Gamma = \lbr \Gamma_v\rbr_{v\in\calV} $
of large level lines.  Let $\calS = \lbr  \gamma_v\rbr_{v\in\calV} $ be a
collections of polygonal lines.
We wish to derive an upper bound on $\bbQ_{N ,\beta}$-probabilities
(see \eqref{eq:Plarge})
\be{eq:Graph-sets}
\bbQ_{N, \beta}\lb \1_{\lbr \Gamma\in \calG\rbr} \prod_{v\in \calV}
\1_{\lbr \Gamma_v\stackrel{\ell_N}{\sim} \gamma_v \rbr}\rb \eqvs
\sum_{\Gamma}
{\rm e}^{-\beta \sum_v \abs{\Gamma_v}
- \sum_{\mathcal{C}\not \sim \Gamma }^*
\Phi_{\beta}(\mathcal{C}) }
\1_{\lbr \Gamma\in \calG\rbr} \prod_{v\in \calV}
\1_{\lbr \Gamma_v\stackrel{\ell_N}{\sim} \gamma_v \rbr}.
\ee
Above $\sum^*$ restricts summation to connected clusters $\mathcal{C}\subset B_N$. Since we are
trying
to derive upper bounds in terms of surface tension $\taub$ which was introduced in
\eqref{eq:shape-lim}
in terms of infinite volume weights, it happens to be convenient to
augment $\calV$ with an additional
root vertex $0$ which corresponds to $\Gamma_0 = \partial B_N$, and connect it
to other vertices of $\calV$ using exactly the
same
rules as specified above (under the convention that if $\Gamma$ contains other copies
of $\partial B_N$, then
$\Gamma_0$ is the external one in the ordering of these copies).
Let $\calG_0 = \lb \calV_0 , \calE_0\rb$ to
be the
augmented graph,
and let $\hat\calG_0$ to be the line graph of $\calG_0$. That is vertices of $\hat\calG_0$ are
undirected
edges $e = (u,v )$ of $\calG_0$, and we say that $e$ and $g$ are neighbors in $\hat\calG_0$ if they
are adjacent in $\calG_0$. Let $\kappa_{\calG}$ be the chromatic number of $\hat\calG_0$, and consider a
disjoint decomposition $\hat\calG_0 = \cup_{i=1}^{\kappa_{\calG}}\hat\calG_i$.  By definition each class
$\hat\calG_i$ contains pair-wise non-adjacent edges.

Now, if $\Gamma\in \calG$, then,
\be{eq:Phi-weight-bound}
\sumtwo{\mathcal{C}\not \sim \Gamma }{\mathcal{C}\subset B_N}
\Phi_{\beta}(\mathcal{C})
\geq \sum_{v\in \calV} \Phi_{\beta}(\mathcal{C})
\1_{\lbr \mathcal{C}\not \sim \Gamma_v \rbr}
 - \sum_{e\in \calE_0}\abs{\Phi_{\beta}(\mathcal{C})}
 \1_{\lbr \mathcal{C}\not \sim e\rbr } ,
\ee
where  we write $\1_{\lbr \mathcal{C}\not \sim e\rbr } = \1_{\lbr \mathcal{C}\not \sim \Gamma_u \rbr}
\1_{\lbr \mathcal{C}\not \sim \Gamma_v \rbr}$ for an undirected edge  $e = (u, v) \in \calE_0$.

We arrive to the following upper bound in terms of product free weights defined in \eqref{eq:weight-free}:
\be{eq:UB-product}
\bbQ_{N, \beta}\lb \1_{\lbr \Gamma\in  \calG\rbr} \prod_{v\in \calV}
\1_{\lbr \Gamma_v\stackrel{\ell_N}{\sim} \gamma_v  \rbr}\rb \leqs
\bigotimes_{v\in \calV} w_\beta^{\sff} \lb
1_{\lbr \Gamma\in  \calG\rbr} \prod_{v\in \calV}
\1_{\lbr \Gamma_v\stackrel{\ell_N}{\sim} \gamma_v  \rbr}
{\rm exp}\lbr \sum_{i=1}^{\kappa_\calG}
\sumtwo{e\in \hat \calG_i}{\calC\not\sim e}  \abs{\Phi_{\beta}(\mathcal{C})}\rbr
\rb .
\ee
By the (generalized) H\"{o}lder inequality,
\be{eq:Gen-Holder}
\begin{split}
&\log \bbE_{N, \beta}\lb \1_{\lbr \Gamma\in  \calG\rbr}
\prod_{v\in \calV} \1_{\lbr\Gamma_v\stackrel{\ell_N}{\sim} \gamma_v  \rbr}\rb
\\
&\quad \leqs \frac{1}{\kappa_\calG} \sum_{i=1}^{\kappa_\calG}
\log \lb \bigotimes_{v\in \calV} w_\beta^{\sff}
\lb
1_{\lbr \Gamma\in  \calG\rbr} \prod_{v\in \calV}
\1_{\lbr \Gamma_v\stackrel{\ell_N}{\sim} \gamma_v  \rbr}
{\rm exp}\lbr \kappa_\calG \sumtwo{e\in \hat \calG_i}{\calC\not\sim e}
\abs{\Phi_{\beta}(\mathcal{C})}\rbr
\rb
\rb
.
\end{split}
\ee
For each $i = 1, \dots , \kappa_\calG$ we can relax constraints and write
\be{eq:relax-pairs}
1_{\lbr \Gamma\in  \calG\rbr} \leq
\prod_{e = (u, v )\in \hat \calG_i} \1_{\lbr \Gamma_u \sim\Gamma_v\rbr} .
\ee
Above $\Gamma_u \sim\Gamma_v$ just means that $\Gamma_u$ and $\Gamma_v$ are
two compatible large level lines.

Let us say that $u\not\in \hat \calG_i$  if no edge of
{$\hat \calG_i$}  contains $u$ as a vertex.
Each of the $i =1, \dots , \kappa_\calG$ summands on the right hand side of \eqref{eq:Gen-Holder}
is bounded above (under notation convention $w_\beta^{\sff} (\Gamma_0 )=1$ for the auxiliary
vertex $\Gamma_0 =\partial B_N$)
by
\be{eq:term-i}
\sum_{u\not\in \hat \calG_i} \log w_\beta^{\sff} (\Gamma_u\stackrel{\ell_N}{\sim} \gamma_u )
+ \sum_{(u,v) \in \hat\calG_i }
\log
w_\beta^{\sff}\bigotimes w_\beta^{\sff}
\lb \1_{\lbr \Gamma_u \sim \Gamma_v \rbr} \1_{\lbr \Gamma_v\stackrel{\ell_N}{\sim} \gamma_v  \rbr}
\1_{\lbr \Gamma_u\stackrel{\ell_N}{\sim} \gamma_u  \rbr}
{\rm e}^{\kappa_\calG \sum_{\calC\not\sim e} \abs{\Phi_{\beta}(\mathcal{C})}}
\rb
.
\ee

In order to apply  \eqref{eq:term-i} we need, first of all, to control the chromatic number $\kappa_\calG$.
After that we shall be left with studying only the case of two compatible contours, and  we shall
need to show that in the latter case at all sufficiently  low temperatures entropic repulsion which is triggered
by compatibility constraint $\Gamma_u \sim \Gamma_v$ wins over the attractive  potential
$\kappa_\calG \sum_{\calC\not\sim e} \abs{\Phi_{\beta}(\mathcal{C})}$.

\subsection{Entropic repulsion versus interaction. }
\label{sub:two}
In this subsection we formulate  upper bounds on probabilities related to  two compatible
interacting large contours.

{
\begin{theorem}
 \label{thm:entropic}
 Assume that a number $\chi > 1/2$ and a sequence $\lbr \kappa_\beta\rbr$ are
 such that
 \begin{equation}
\label{eq:chi-cond}\limsup_{\beta\to\infty}
\sup_{\mathcal{C}\neq\emptyset}
\kappa_\beta \text{e}^{
{
\chi \beta \lb {\rm diam}_\infty \lb
\mathcal{C}\rb +1\rb
}
}
\left\vert \Phi_{\beta}(\mathcal{C}
)\right\vert < 1 .
\end{equation}
Fix $a_0>0$. Recall the definition of excess surface tension $\Omega_\beta$ in
\eqref{eq:Omega-diff}.
Then,
\be{eq:entropic}
\begin{split}
&\frac{1}{N}
\log\lb
w_\beta^{\sff}\bigotimes w_\beta^{\sff}
\lb
\1_{\lbr \Gamma_u \sim \Gamma_v \rbr}
\1_{\lbr \Gamma_v\stackrel{\ell_N}{\sim} \gamma_v \rbr}
\1_{\lbr \Gamma_u\stackrel{\ell_N}{\sim} \gamma_u \rbr}
{\rm e}^{\kappa_\beta \sum_{\calC\not\sim e} \abs{\Phi_{\beta}(\mathcal{C})}}
\rb
\rb
\\
&\qquad
\leq -[ \taub (\sfa (\gamma_u ) ) + \taub (\sfa (\gamma_v)  ) +
\alpha_0 \lb  \Omega_\beta
(\gamma_u )\rb  +
 \alpha_0 \lb \Omega_\beta  ( \gamma_v )\rb
]
\lb 1- \smo{N} (1)\rb,
\end{split}
\ee
uniformly in $\beta$ and $N$ sufficiently large and uniformly in closed polygonal
lines $\gamma_v , \gamma_u$ satisfying $\sfa (\gamma_v ) , \sfa (\gamma_u ) \geq a_0$.
The function  $\alpha_0$ is the  function  appearing in \eqref{eq:DKS-one}.
\end{theorem}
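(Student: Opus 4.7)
The plan is to factorize the joint free-weight expression as a product of two single-contour free weights times a correction term, and then to absorb the compatibility constraint together with the cluster-sharing interaction into a $\smo{N}(1)\,N$ correction on the exponential scale. Concretely, write the left-hand side of \eqref{eq:entropic} as $Z_u\, Z_v\, \mathsf{J}$, where $Z_u := w_\beta^{\sff}(\Gamma_u\stackrel{\ell_N}{\sim}\gamma_u)$, similarly $Z_v$, and
\[
\mathsf{J} := \frac{1}{Z_u Z_v}\, w_\beta^{\sff}\bigotimes w_\beta^{\sff}\lb \1_{\lbr \Gamma_u\sim\Gamma_v\rbr}\,\1_{\lbr \Gamma_u\stackrel{\ell_N}{\sim}\gamma_u\rbr}\,\1_{\lbr \Gamma_v\stackrel{\ell_N}{\sim}\gamma_v\rbr}\,{\rm e}^{\kappa_\beta \sum_{\calC\not\sim e}\abs{\Phi_\beta(\calC)}}\rb .
\]
By \eqref{eq:DKS-one} the factor $Z_u Z_v$ already supplies the upper bound claimed in \eqref{eq:entropic}, so the task reduces to proving $\mathsf{J}\leq {\rm e}^{\smo{N}(1)\,N}$.

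Next, I would control the interaction term pointwise. Any cluster $\calC$ contributing to the sum is simultaneously incompatible with $\Gamma_u$ and $\Gamma_v$, so its support touches both contours and hence forces a bond of $\Gamma_u$ and a bond of $\Gamma_v$ to lie within $\ell^\infty$-distance $\mathrm{diam}_\infty(\calC)+1$ of each other. Summing over clusters anchored at a given lattice site via the standard Peierls enumeration and using \eqref{eq:chi-cond} to absorb the $\kappa_\beta$ prefactor, I would obtain a $\beta$-uniform constant $C_\chi<\infty$ and a cutoff $D_\beta$, diverging slowly with $\beta$, such that
\[
\kappa_\beta \sum_{\calC\not\sim e} \abs{\Phi_\beta(\calC)} \;\leq\; C_\chi\, \abs{I(\Gamma_u,\Gamma_v)} ,
\]
where $I(\Gamma_u,\Gamma_v)$ denotes the set of bonds of one contour that lie within $\ell^\infty$-distance $D_\beta$ of the other. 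The hypothesis $\chi>1/2$ in \eqref{eq:chi-cond} is precisely what keeps $C_\chi$ independent of $\beta$.

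It then remains to show that under the product of the normalized free measures $\bbE_u\otimes\bbE_v$ obtained from $Z_u^{-1}w_\beta^{\sff}$ and $Z_v^{-1}w_\beta^{\sff}$,
\[
\mathsf{J}\leq \bbE_u\otimes\bbE_v\lb \1_{\lbr\Gamma_u\sim\Gamma_v\rbr}\,{\rm e}^{C_\chi\abs{I(\Gamma_u,\Gamma_v)}}\rb \leq {\rm e}^{\smo{N}(1)\,N} .
\]
The compatibility constraint forbids transverse crossings, so along any stretch where the two contours stay within $D_\beta$ of each other they must run side by side. A surgery argument in the spirit of the random-line machinery of \cite{DKS}, refined as in \cite{IST15}, produces an entropic gap of at least $c\beta$ per bond in $I$ relative to an unconstrained pair of contours; summing the resulting geometric series in $\abs{I}=m$ against the weight ${\rm e}^{C_\chi m}$ yields an $O(1)$ bound as soon as $c\beta>C_\chi$, which holds for $\beta$ sufficiently large.

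The main obstacle will be precisely this quantitative entropic-repulsion estimate for a pair of compatible low-temperature level lines. Unlike the one-contour DKS setting, the two level lines may share bonds and are subject to the south-west splitting rule, so a naive reflection immediately conflicts with these constraints. I expect the argument to proceed by a local surgery performed on each maximal close stretch -- swapping the paired pieces of $\Gamma_u$ and $\Gamma_v$ so as to produce an effectively independent pair at a boundary cost of order $\beta$ -- followed by an appeal to the strictly positive Ornstein--Zernike rate for 2D Ising polymers at low temperature, which converts the boundary cost into a gap that is linear in the length of the close region.
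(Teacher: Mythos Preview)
Your overall factorization strategy and the reduction to bounding the interaction correction $\mathsf{J}$ is reasonable, and the paper also ultimately reduces to a comparison between attraction and entropic repulsion. However, the quantitative heart of your argument --- the claim that the non-crossing constraint produces an \emph{entropic gap of at least $c\beta$ per bond in $I$} --- is false for low-temperature level lines, and this is precisely where the condition $\chi>1/2$ actually bites.

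At inverse temperature $\beta$ a single contour, viewed in the effective random-walk representation of \cite{IST15}, takes a non-trivial vertical step only with probability $p_\beta=\bigof{\mathrm{e}^{-\beta}}$ per horizontal unit; with probability $1-p_\beta$ it stays at the same height (see \eqref{eq:stepsW}). Consequently, two compatible contours that start close will \emph{typically} remain within distance $O(1)$ of each other for a stretch of order $\mathrm{e}^{\beta}$ horizontal steps; this is not an exponentially unlikely event. The non-crossing constraint itself costs only polynomially in the length of the close stretch (ballot-type estimates), not $\mathrm{e}^{-c\beta}$ per bond. So the geometric series you describe does not converge for the reason you give: there is no per-bond exponential penalty to play against ${\rm e}^{C_\chi m}$.

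What the paper does instead is to decompose each skeleton into good and bad edges via the diamond construction, reduce to a pair of nearly parallel edges (Proposition~\ref{prop:two-edges-bound}), and then set up an irreducible-animal / random-walk representation for the pair (Subsection~\ref{sub:two-segments}). The key step is the recursion \eqref{eq:rho-beta}--\eqref{eq:less-0ne} together with Lemma~\ref{lem:sum}: the attractive contribution per close renewal is of order $\mathrm{e}^{-2\chi\beta}$, while the expected number of renewals at which the height difference $\sfZ_n$ is $O(1)$ and the walk has stayed nonnegative is of order $1/p_\beta\sim \mathrm{e}^{\beta}$ (this is \eqref{eq:target-2}). The product $\mathrm{e}^{-2\chi\beta}\cdot \mathrm{e}^{\beta}$ is $<1$ for large $\beta$ \emph{iff} $\chi>1/2$. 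That is where the hypothesis on $\chi$ is used --- not, as you suggest, to keep a prefactor $C_\chi$ bounded in $\beta$, but to win a genuinely tight competition between a weak attraction and a weak repulsion whose strengths are both exponentially small in $\beta$. Your surgery/reflection idea does not produce the required estimate here; you need the renewal structure and the quantitative random-walk computation of Lemma~\ref{lem:sum}.
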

We sketch the  proof of Theorem~\ref{thm:entropic} in the concluding Section~\ref{sec:Two}.
}

\subsection{Upper bound on $\kappa_\calG$.}
\label{sub:kappa}
In this  Subsection we shall show that
for all $\beta$ sufficiently large one can restrict attention
to graphs $\calG$ satisfying $\kappa_\calG \leq \kappa_\beta$ where the
sequence $\lbr \kappa_\beta\rbr$ complies with \eqref{eq:chi-cond}.

We start with a simple general combinatorial observation.

Let $G_{N}$ be a graph with no loops and double edges, having $N$ vertices.
Its edge coloring is called proper, if at every vertex all the bonds entering
it have different colors. The minimal number of colors needed for creating a
proper edge coloring will be denoted by $\varkappa\left(  G_{N}\right)  .$ It
is called the \textit{edge chromatic number. }We need the upper bound on
$\varkappa\left(  G_{N}\right)  .$ Evidently, the complete graph with $N$
vertices has the highest edge chromatic number, so it is sufficient to
consider only the case of $G_{N}$ being a complete graph.

\begin{theorem}
\label{coloring simplex} Let $G_{N}$ be a complete graph with $N$ vertices.
Then
\[
\varkappa\left(  G_{N}\right)  =\left\{
\begin{array}
[c]{cc}%
N\ \ \ \ \  & \text{if }N\text{ is odd,}\\
N-1 & \text{if }N\text{ is even.}%
\end{array}
\right.
\]

\end{theorem}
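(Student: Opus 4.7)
The plan is to establish both a lower bound and a matching upper bound, treating the parities of $N$ separately, and to do so with the classical ``round-robin'' construction rather than any machinery.

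For the lower bound, first observe that in $G_N$ every vertex has degree $N-1$, and all edges meeting at a common vertex must receive different colors, so $\varkappa(G_N)\geq N-1$ for every $N$. When $N$ is odd, I would sharpen this by a counting argument: in a proper edge coloring each color class is a matching, and a matching in $G_N$ has at most $\lfloor N/2\rfloor=(N-1)/2$ edges. Since $G_N$ has $N(N-1)/2$ edges, the number of color classes must be at least $N(N-1)/2$ divided by $(N-1)/2$, giving $\varkappa(G_N)\geq N$.

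For the upper bound when $N$ is even, I would exhibit an explicit proper coloring with $N-1$ colors. Label the vertices $v_1,\ldots,v_{N-1},v_\infty$, placing $v_1,\ldots,v_{N-1}$ at the corners of a regular $(N-1)$-gon and $v_\infty$ at its centre. For each color $c\in\{1,\ldots,N-1\}$, color by $c$ the edge $\{v_\infty,v_c\}$ together with every edge $\{v_i,v_j\}$ ($i,j\neq\infty$) that is perpendicular to the diameter through $v_c$; equivalently, use the ``addition mod $N-1$'' rule $\{v_i,v_j\}$ gets color $i+j\pmod{N-1}$ for $i,j\in\{1,\ldots,N-1\}$, while $\{v_\infty,v_c\}$ gets color $2c\pmod{N-1}$. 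A direct check shows that at each vertex the $N-1$ incident edges receive $N-1$ distinct colors. The odd case $N$ reduces to the even case: embed $G_N$ into $G_{N+1}$, apply the above construction to obtain a proper edge coloring of $G_{N+1}$ using $N$ colors, and restrict it to $G_N$; this yields a proper edge coloring of $G_N$ with at most $N$ colors, matching the lower bound.

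The main ``obstacle'' is really only verifying that the rotational construction above is indeed a proper edge coloring; this is routine but needs to be written down carefully, because one must separately check the edges incident to the central vertex $v_\infty$ and those among the peripheral vertices. Once that verification is in hand, the two parity cases combine to give the stated formula
\[
\varkappa(G_N)=\begin{cases} N & \text{if $N$ is odd},\\ N-1 & \text{if $N$ is even},\end{cases}
\]
which is the desired bound on the edge-chromatic number needed in Subsection~\ref{sub:kappa}.
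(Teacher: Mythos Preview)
Your argument is correct and essentially coincides with the paper's proof: both use the classical round-robin construction (parallel classes of chords in a regular polygon together with an extra ``centre'' vertex), and both obtain the odd lower bound from the impossibility of a perfect matching on an odd vertex set. The only cosmetic difference is the order of presentation --- the paper first colours $G_N$ for odd $N$ via the $N$-gon and then extends to $G_{N+1}$ by adding a centre vertex, whereas you first colour $G_N$ for even $N$ via an $(N-1)$-gon plus centre and then restrict to the odd case; also, your geometric and algebraic descriptions differ by the harmless colour relabelling $c\mapsto 2c\pmod{N-1}$.
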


\begin{proof}
Let us produce a proper edge coloring of $G_{N}$ by $N$ colors, for $N$ odd.
To do this, let us draw $G_{N}$ on the plane as a regular $N$-gon $P_{N}$,
with all its diagonals. Let us color all the $N$ sides $s$ of $P_{N}$ using
all the $N$ colors. What remains now is to color all diagonals. Note, that
every diagonal $d$ of $P_{N}$ is parallel to precisely one side $s_{d}$ of
$P_{N},$ because $N$ is odd. Let us color the diagonal $d$ by the color of the
side $e_{d}.$ Evidently, the resulting coloring $\mathcal{C}_{N}$ is proper.

Every vertex of $G_{N}$ has $N-1$ incoming edges, while we have used $N$
colors to color all the edges. Therefore at each vertex $v$ exactly one color
$c_{v}$ is missing -- it is the color of the edge opposite $v.$ By
construction, all $N$ missing colors $c_{v}$ are different. Let us use this
property to construct a proper edge coloring of $G_{N+1}$ by $N$ colors.
First, we color $G_{N}\subset G_{N+1}$ by the coloring $\mathcal{C}_{N}.$ Let
$w$ be the extra vertex, $w=G_{N+1}\setminus G_{N}.$ Let us color the bond
$\left(  v,w\right)  $ by the color $c_{v}.$ Evidently, the resulting coloring
$\mathcal{C}_{N+1}$ is again proper.

Finally, for $N$ odd the constructed coloring $\mathcal{C}_{N}$ is best
possible: there is no coloring using $N-1$ colors. Indeed, suppose such a
coloring does exist. That means that at each vertex of $G_{N}$ all $N-1$
colors are present. Therefore, the bonds of the first color, say, define a
partition of the set of all vertices into pairs. That, however, is impossible,
since $N$ is odd. This nice last argument is due to O. Ogievetsky.
\end{proof}


{

 Next let us record  a (straightforward) consequence of \eqref{eq:BulkLL-Q} as follows:
 Fix $A$. Then,
 \be{eq:A-Gamma}
 \lim_{N\to\infty}\lb  \frac{1}{N} \log
 \lb
 \bbQ_{N , \beta}
 \lb
 \Xi_N \geq  \rho_\beta N^3 +AN^2\, \big|\, \Gamma\stackrel{\ell_N}\sim \calS
 \rb
 \rb
  + \frac{\lb \delta_\beta - \sfa\lb \calS\rb \rb^2}{2D_\beta}\rb = 0 .
 \ee
 uniformly in collections $\calS$ of  closed polygons with
 $0 \leq \sfa \lb \calS\rb \leq \delta_\beta$
 (recall \eqref{eq:quantities} for the definition of $\delta_\beta$).
}

We proceed  with the following two observations concerning the variational
problem \eqref{eq:VP-delta}:

\begin{lemma}
\label{lem:difference}
Fix $\beta$ sufficiently large and consider \eqref{eq:VP-delta}.
Given $\delta >0$ let $a^*_\beta (\delta )$ be the optimal area.
Then,
\be{eq:difference}
\limsup_{\delta\to\infty}\lb \delta - a^*_\beta (\delta )\rb :=\xi_\beta < \infty.
\ee
\end{lemma}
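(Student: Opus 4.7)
My plan is to bound $\delta-a_\beta^*(\delta)$ uniformly in $\delta$ by a sandwich argument: a universal linear lower bound on $\taub$ derived from isoperimetry, matched in slope by a linear-in-$\delta$ upper bound coming from a carefully tuned test stack. First I would identify the asymptotically optimal radius by computing, via \eqref{eq:Pb}, that for a Wulff plaquette $\sfP_b$ of radius $r_b\in(0,1]$ the ratio $\taub(\sfP_b)/b = \taub(\sfe)(8-2(4-w)r_b)/(4-(4-w)r_b^2)$ is minimized at
\[
r^\circ := \frac{2}{2+\sqrt{w}},
\]
with minimum value $c_\beta := \taub(\sfe)(2+\sqrt{w})/2$; the corresponding area per plaquette is $\lambda^\circ := 4-(4-w)(r^\circ)^2 = 8\sqrt{w}/(2+\sqrt{w})$.

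I would then establish the universal linear lower bound $\taub(\calL)\geq c_\beta\sfa(\calL)$ for every compatible family of loops $\calL\subset\bbB_1$. For a single loop $\gamma\subset\bbB_1$: if $\sfa(\gamma)\leq w$, Wulff's inequality \eqref{eq:Wr} gives $\taub(\gamma)/\sfa(\gamma)\geq 2\taub(\sfe)\sqrt{w/\sfa(\gamma)}\geq 2\taub(\sfe)\geq c_\beta$ (using $\sqrt{w}\leq 2$); if $\sfa(\gamma)\in[w,4]$, the constrained Wulff inequality picking the plaquette as the minimizer, together with the computation of the previous paragraph, yields the same ratio bound. For a compatible stack, $\taub(\calL)/\sfa(\calL)$ is a weighted average of the per-loop ratios and so preserves the bound. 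Next, for every $\delta>0$, I would take as a test stack $\tilde\calL_\delta$ the type-2 family of $\ell_\delta := \lceil\delta/\lambda^\circ\rceil$ concentric Wulff plaquettes of radius $r^\circ$. Then $\tilde a := \sfa(\tilde\calL_\delta) = \ell_\delta\lambda^\circ\in[\delta,\delta+\lambda^\circ)$ and the lower bound is saturated for this stack, so $\taub(\tilde\calL_\delta) = c_\beta\tilde a$; hence
\[
\calE_\beta(\tilde\calL_\delta\,|\,\delta) \leq \frac{(\lambda^\circ)^2}{2\sfD_\beta} + c_\beta(\delta+\lambda^\circ).
\]

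Finally, letting $\calL^*$ be any minimizer of \eqref{eq:VP-delta} with $a^* = \sfa(\calL^*)$ and $\xi := \delta - a^*\geq 0$, the lower bound on $\taub(\calL^*)$ together with optimality yields
\[
c_\beta a^* + \frac{\xi^2}{2\sfD_\beta}\leq \calE_\beta(\calL^*\,|\,\delta) \leq \calE_\beta(\tilde\calL_\delta\,|\,\delta) \leq \frac{(\lambda^\circ)^2}{2\sfD_\beta} + c_\beta(\delta+\lambda^\circ).
\]
Subtracting $c_\beta\delta$ and rearranging produces the quadratic inequality $\xi^2 - 2\sfD_\beta c_\beta\xi - \lambda^\circ(\lambda^\circ + 2\sfD_\beta c_\beta)\leq 0$, which, after completing the square, becomes $(\xi - \sfD_\beta c_\beta)^2\leq(\sfD_\beta c_\beta + \lambda^\circ)^2$; hence $\xi\leq 2\sfD_\beta c_\beta + \lambda^\circ$ uniformly in $\delta>0$, proving the lemma with $\xi_\beta\leq 2\sfD_\beta c_\beta + \lambda^\circ$. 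The one delicate point is the coincidence between the minimizer $r^\circ$ of the single-loop ratio $\taub(\gamma)/\sfa(\gamma)$ and the area-per-layer of an admissible tall stack: this ensures the upper and lower bounds have \emph{exactly} the same linear slope $c_\beta\delta$ so that $\xi$ decouples from $\delta$; without this matching one would only obtain $\xi = O(\sqrt{\delta})$. Everything else reduces to algebra.
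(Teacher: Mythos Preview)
Your proof is correct and takes a genuinely different route from the paper's. The paper argues via the first–order optimality condition: for large $\delta$ the minimizer is a type-2 stack, and then \eqref{eq:st-L2} gives
\[
\frac{\delta - a^*_\beta(\delta)}{\sfD_\beta}=\frac{\taub(\sfe)}{r^{2,*}(a^*_\beta(\delta))},
\]
so the problem reduces to showing that the optimal plaquette radius stays bounded away from zero along the sequence of transitions; this is done by solving the pair of equations expressing equality of area and of surface tension for $\calL^2_\ell$ and $\calL^2_{\ell+1}$ at the transition points $\hat a_{\ell+1}$. Your argument bypasses all of this structure: you locate the infimum $c_\beta$ of $\taub(\gamma)/\sfa(\gamma)$ over single loops in $\bbB_1$, observe that a tall stack of plaquettes of the minimizing radius $r^\circ$ saturates this ratio, and sandwich $\calE_\beta(\calL^*|\delta)$ between $c_\beta a^* + \xi^2/(2\sfD_\beta)$ and $c_\beta\delta + O(1)$. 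The crucial point you correctly identify is that the same slope $c_\beta$ appears in both bounds, so the linear-in-$\delta$ terms cancel and $\xi$ is bounded uniformly.

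Your approach is more elementary and self-contained, and yields the explicit bound $\xi_\beta\le 2\sfD_\beta c_\beta+\lambda^\circ$; in fact it gives the stronger statement $\sup_{\delta>0}(\delta-a^*_\beta(\delta))<\infty$, not just the $\limsup$. The paper's approach, while less explicit, produces structural information (the limiting radius of optimal stacks) that it reuses in the remark on \eqref{eq:difference-nu}. One cosmetic point: you write $\xi\ge 0$, but your quadratic inequality does not need this and in fact also yields the lower bound $\xi\ge -\lambda^\circ$; since only the upper bound is relevant for the lemma you may simply drop the sign claim.
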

Furthermore,
\begin{lemma}
 \label{lem:a-bound}
 In the notation of Lemma~\ref{lem:difference},
 \be{eq:a-bound}
 \min_{\ell}\lb \taub \lb \calL_\ell^2{(a)}  \rb + \frac{(\delta - a )^2}{2D_\beta}\rb
 - \lb \taub \lb a_\beta^* \rb + \frac{(\delta - a^*_\beta )^2}{2D_\beta}\rb
 \geq \frac{(a-a^*_\beta )^2}{2D_\beta} ,
 \ee
 for all $\delta$ and all $a\in [0, \delta]$
\end{lemma}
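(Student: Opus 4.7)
The strategy is to combine the strong convexity coming from the quadratic term $(\delta-a)^2/(2D_\beta)$ with the convex envelope of the full isoperimetric surface tension $\tau_\beta(a)$.

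Set $q(a) := (\delta-a)^2/(2D_\beta)$ and $F(a) := \tau_\beta(a) + q(a)$, so that the RHS of the inequality is exactly $F(a_\beta^*) = \min_a F(a)$. Since type-2 stacks form a subfamily of all compatible collections, $\min_\ell \tau_\beta(\calL_\ell^2(a)) \geq \tau_\beta(a)$ pointwise, whence
\[
\min_\ell H_\ell(a) := \min_\ell\bigl[\tau_\beta(\calL_\ell^2(a)) + q(a)\bigr] \geq F(a).
\]
It therefore suffices to establish the sharper statement $F(a) \geq F(a_\beta^*) + (a-a_\beta^*)^2/(2D_\beta)$, reducing the claim to a strong-convexity estimate on the full isoperimetric minimization.

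Let $\tilde\tau_\beta$ denote the convex envelope of $\tau_\beta$ and set $\tilde F(a) := \tilde\tau_\beta(a) + q(a)$. Since $\tilde\tau_\beta$ is convex and $q''=1/D_\beta$, the function $\tilde F$ is strongly convex with $\tilde F''\geq 1/D_\beta$, yielding
\[
\tilde F(a) \geq \tilde F(\tilde a^*) + \frac{(a-\tilde a^*)^2}{2D_\beta}, \qquad \tilde a^* := \arg\min \tilde F.
\]
Away from the discrete set of transition values of $\delta$ listed in Theorem~\ref{thm:VP-vl-2}, the minimizer $a_\beta^*$ of $F$ is unique and lies strictly inside the convex piece of $\tau_\beta$ corresponding to the optimal stack type: there $\tau_\beta$ agrees locally with a single convex curve $\tau_\beta(\calL_{\ell^*}^2(\cdot))$ (or its type-1 analogue from Proposition~\ref{prop:stick-out}), so $\tilde\tau_\beta(a_\beta^*) = \tau_\beta(a_\beta^*)$ and the first-order conditions for $F$ and $\tilde F$ coincide. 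Consequently $\tilde a^* = a_\beta^*$ and $\tilde F(a_\beta^*) = F(a_\beta^*)$, giving
\[
F(a) \geq \tilde F(a) \geq F(a_\beta^*) + \frac{(a-a_\beta^*)^2}{2D_\beta},
\]
and combined with $\min_\ell H_\ell(a) \geq F(a)$ this yields the claim.

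The main obstacle is verifying that $a_\beta^*$ lies on a convex piece of $\tau_\beta$ for generic $\delta$ --- equivalently, that $\tilde\tau_\beta(a_\beta^*) = \tau_\beta(a_\beta^*)$. This requires a case analysis along the bumps of $\tau_\beta$ (intervals where the optimal stack type changes and $\tau_\beta$ strictly exceeds its convex envelope) to show that $F$ admits no minimum interior to any bump. This should follow from the stair-step first-order transition structure of Theorems~\ref{thm:VP-vl-2} and~\ref{thm:k-star-slopes}: each transition slope determines a chord of $\tilde\tau_\beta$ whose endpoints are themselves realized by optimal stacks, and the strict inequality $\tau_\beta > \tilde\tau_\beta$ on the interior of the bump, combined with strict convexity of $q$, forces $\arg\min F$ to sit at an endpoint of the bump --- i.e., on an optimal stack and on a convex piece of $\tau_\beta$. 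Since the application of the lemma in the proof of Theorem~\ref{thm:B} only requires $\delta=\delta_\beta$ with $A\in(A_\ell,A_{\ell+1})$, strictly off transitions, this generic case is enough; at transition $\delta$ the bound is understood with $a_\beta^*$ replaced by whichever of the two coexisting optima lies nearer to $a$, a modification that does not affect the downstream use.
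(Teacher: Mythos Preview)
Your approach is quite different from the paper's. The paper works directly with the functions $H_\ell(a)=\taub(\calL_\ell^2(a))+(\delta-a)^2/(2D_\beta)$: it notes that each $\taub(\calL_\ell^2(\cdot))$ is convex (from \eqref{eq:st-L2} and \eqref{eq:rad-2a}), so $H_\ell''\ge 1/D_\beta$ on its domain; the same second-derivative bound then holds for $\min_\ell H_\ell$ at every smooth point, and since this minimum is attained at $a_\beta^*$, the paper concludes. There is no passage through the full isoperimetric $\tau_\beta(a)$, no convex envelope, and no appeal to the transition structure of Theorems~\ref{thm:VP-vl-2} or~\ref{thm:k-star-slopes}.

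Your initial reduction $\min_\ell H_\ell(a)\ge F(a)=\tau_\beta(a)+q(a)$ is valid, but it discards the specific layered structure, and the convex-envelope machinery you then build around it is both heavier and genuinely incomplete. The gap you flag in Step~4 is real, and the patch you propose does not close it. Theorems~\ref{thm:VP-vl-2} and~\ref{thm:k-star-slopes} describe support lines to $\calG(a)=\tau(a)+a^2/(2\sigma_\beta)$, \emph{not} to $\tau$ (or $\taub$) itself: the line of slope $v$ supports $\min_\ell\calG_\ell^2$ at $a_\beta^*$, but subtracting the strictly convex quadratic $a^2/(2\sigma_\beta)$ turns that line into a concave parabola, so no support line to $\tau$ is produced. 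The ``chords of $\tilde\tau_\beta$'' you invoke are thus really chords of the envelope of $\calG$, and the two envelopes have different contact sets. Nor does ``$a_\beta^*$ lies on a locally convex piece of $\tau_\beta$'' force $\tilde\tau_\beta(a_\beta^*)=\tau_\beta(a_\beta^*)$: a function can strictly exceed its convex envelope even at points where it is locally convex. If you unwind your own reduction, what you actually need is that the line of slope $\tilde s=1/r^{2,\ell^*}(a_\beta^*)$ supports $a\mapsto\min_\ell\tau(\calL_\ell^2(a))$ at $a_\beta^*$; but a short computation gives $\min_a[\tau(\calL_\ell^2(a))-\tilde s\,a]=\ell\cdot\bigl[8-(4-w)/\tilde s-4\tilde s\bigr]$, which is linear in $\ell$ with a coefficient that can have either sign, so the minimizing $\ell$ for this \emph{linear-tilt} problem need not be the $\ell^*$ coming from the \emph{quadratic} problem, and the support-line claim does not follow from the transition picture you cite.
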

\begin{proof}[Proof of Lemma~\ref{lem:difference}]
 We may consider only sufficiently large values of $\delta$, such that
 solutions to \eqref{eq:VP-delta} are given by optimal stacks of type 2.
 By the second of \eqref{eq:st-L2},
 \be{eq:rad-eq}
 \frac{\delta -a^*_\beta (\delta )}{D_\beta} = \frac{\taub (\sfe )
 }{
 r^{2, *} ( a^*_\beta (\delta ))} ,
 \ee
 where $r^{2, *} (a )$ is the radius of the optimal stack of type-2 at
 given area $a$. Hence, it would be enough to check that
 \be{eq:rad-star-a}
 \liminf_{a\to\infty} r^{2, *} (a ) >0 .
 \ee
 Clearly, if  $\ell <m$ and $a\in [m w ,4\ell ]$ (that is if area $a$ can be
 realized by both $\ell$ and $m$ stacks of type-2), then
 $r^{2, \ell} (a ) < r^{2, m } (a )$. Which means that
 the map $a\mapsto r^{2, *} (a )$ has the following structure: There is a
 sequence $w= \hat{a}_1 < \hat{a}_2 < \dots $ of (transition)  areas such
 that:

 \noindent
 (a) On each of the intervals $\lb \hat{a}_\ell , \hat{a}_{\ell +1}\rb$
 the optimal radius $r^{2, *} (a ) = r^{2 , \ell } (a)$ and it is decreasing.

 \noindent
 (b) At transition points $r^{2 , \ell } (\hat{a}_{\ell +1} ) <
 r^{2 , \ell +1} (\hat{a}_{\ell +1} )$. Hence we need to show that
 \be{eq:rad-star-a-l}
\liminf_{\ell\to\infty} r^{2 , \ell } (\hat{a}_{\ell +1} ) > 0.
 \ee
 Fix $\ell$ and define $r_\ell = r^{2 , \ell } (\hat{a}_{\ell +1} )$ and
 $\rho_\ell = r^{2 , \ell+1 } (\hat{a}_{\ell +1} )$. Then,
 \be{eq:area-rho}
 \hat{a}_{\ell +1} = \ell\lb 4- (4-w )r_\ell^2\rb  =
 (\ell +1) \lb 4- (4-w )\rho_\ell^2\rb .
 \ee
 By definition, $\tau \lb \calL^2_\ell ( \hat{a}_{\ell +1} )\rb =
 \tau \lb \calL^2_{\ell+1} ( \hat{a}_{\ell +1} )\rb$. Which reads
 (recall \eqref{eq:rad-2a} and the first of \eqref{eq:st-L2}) as
 \be{eq:tau-rho}
 \ell\lb 8 - 2r_\ell  (4- w)\rb = (\ell +1 ) \lb 8- 2\rho_\ell (4- w)\rb
 \ee
 Solving \eqref{eq:area-rho} and \eqref{eq:tau-rho} we recover
 \eqref{eq:rad-star-a}.
\end{proof}
\begin{remark}
 A slightly more careful analysis implies that under \eqref{eq:p-beta}
 there exists
 $\nu <\infty$ such that for all sufficiently large values of $\beta$,
 \be{eq:difference-nu}
\sup_{\delta}\lb \delta - a^*_\beta (\delta )\rb \leq {\rm e}^{\beta\nu} .
\ee
\end{remark}
\begin{proof}[Proof of Lemma~\ref{lem:a-bound}]
 By the second of \eqref{eq:st-L2} and then by \eqref{eq:rad-2a} the
 function $a\mapsto \taub \lb \calL_\ell^2 (a )\rb$ is convex for any
 $\ell\in\bbN$. Hence,
 \[
  \frac{\dd^2}{\dd a^2}\lb \taub \lb \calL_\ell^2 (a )\rb  +
  \frac{( \delta -a )^2}{2 D_\beta }\rb \geq \frac{1}{D_\beta}
 \]
uniformly in $a\in (\ell w , \delta\wedge 4 \ell )$. The same applies
at generic values $a\in (w, \delta )$ for  the function
\[
 \min_{\ell} \lb \taub \lb \calL_\ell^2 (a )\rb  +
  \frac{( \delta -a )^2}{2 D_\beta }\rb .
\]
But the latter attains its minimum at $a_\beta^* (\delta )$.
Hence the conclusion.
\end{proof}
{
Consider now a collection $\calS = \lbr \gamma_v\rbr_{v\in\calV}$ of closed polygonal
lines.
Given a number $\zeta <1$, such that $2\zeta w > \sfa\lb \bbB_1\rb= 4$, let us
split $\calS$  into a disjoint union,
\be{eq:L-decomp}
 \calS = \calS_0\cup\calS_1\cup\dots ,
\ee
where $\calS_0$ contains all the polygons
$\gamma$ of $\calS$ with area $\sfa (\gamma )\geq \zeta w$, whereas, for
$i=1, 2, \dots$
\be{eq:Li}
\calS_i = \lbr\gamma\in\calS \, : \, \sfa (\gamma ) \in \left[ \zeta^{i+1}w, \zeta^{i}w \right)\rbr  .
\ee
}
{
By construction, any compatible collection $\Gamma$ of large level lines, such that
$\Gamma\stackrel{\ell_N}{\sim}\calS_0$ is always an ordered stack. Given
numbers $d >0, m\in \bbN$ and a value $\beta$ of the inverse
temperature, let us say $\calS$ is  bad; $\calS\in \frB_{d , m} (\beta )$, if
either there exists $i  > d\beta$ such
that $\calS_i$ is not empty, or there exists $1\leq i \leq  d\beta$ such that
the cardinality
\be{eq:cardLi}
\abs{\calS_i} = \#\lbr \gamma\, :\, \gamma  \in \calS_i\rbr \geq m .
\ee
Alternatively, we may think in terms of graphs $\calG = \lb \calV , \calE\rb$ associated
to  bad
collections $\calS = \lbr\gamma_v\rbr_{v\in \calV}$.
}
\begin{proposition}
 \label{thm:bad-stacks}
{
 There exist $d<\infty$ and $m\in \bbN$  such that for all sufficiently large values of $\beta$
 the following holds:
 \be{eq:bad-stacks}
 \limsup_{N\to\infty} \frac{1}{N}
 \max_{\calS  \in \frB_{d , m} (\beta )}
 \log \bbQ_{N, \beta}^A
 \lb \1_{\lbr \Gamma\in  \calG\rbr} \prod_{v\in \calV}
 \1_{\lbr \Gamma_v\stackrel{\ell_N}{\sim} \calS\rbr}\rb
 < 0,
 \ee
for any excess area $A\geq 0$.
}

{
 In particular for all sufficiently large $\beta$ we may restrict
 attention to graphs $\calG$ with
 chromatic number $\kappa_\calG \leq \beta d m +2$,
 independently of the value of excess area $A$
 in $\bbQ_{N, \beta}^A$.
 }
\end{proposition}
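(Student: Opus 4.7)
My plan is to combine Proposition~\ref{prop:lb-astar} for the denominator of the conditional measure with an upper bound on the unconditioned joint weight, and then exhibit a strictly positive excess cost for bad skeleton collections. By Bayes it suffices to upper bound
\[
\bbQ_{N,\beta}\lb\Gamma\in\calG,\ \Gamma\stackrel{\ell_N}{\sim}\calS,\ \Xi_N\geq\rho_\beta N^3+AN^2\rb
\]
by $\exp\lb-N\lbr\taub(\calS)+(\delta_\beta-\sfa(\calS))^2/(2D_\beta)\rbr(1-\smo{N}(1))\rb$. The bulk factor is supplied by \eqref{eq:A-Gamma}. For the contour factor I would bound the cluster correction $\abs{\sum_{\calC\not\sim\Gamma}\Phi_\beta(\calC)}$ by $C\,{\rm e}^{-2\beta}\sum_v|\Gamma_v|$ using \eqref{eq:clusterweight}, which is sub-leading relative to $\beta\sum_v|\Gamma_v|$, so that the weight factorises up to $\smo{N}(1)$ slippage into the product $\prod_v w_\beta^\sff(\Gamma_v\stackrel{\ell_N}{\sim}\gamma_v)$, to each factor of which the single-skeleton bound \eqref{eq:DKS-one} applies. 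Combined with Proposition~\ref{prop:lb-astar} this yields
\[
\log\bbQ_{N,\beta}^A\lb\Gamma\stackrel{\ell_N}{\sim}\calS\rb\leq -N\cdot{\rm EC}(\calS)\cdot(1-\smo{N}(1))
\]
with ${\rm EC}(\calS)=\taub(\calS)+(\delta_\beta-\sfa(\calS))^2/(2D_\beta)-\taub(a^*)-(\delta_\beta-a^*)^2/(2D_\beta)$, and in view of the entropy bound \eqref{eq:skel-number} on the number of skeleton collections it remains to show $\inf_{\calS\in\frB_{d,m}(\beta)}{\rm EC}(\calS)>0$.

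For this I would decompose $\calS=\calS_0\sqcup\calS_{\geq 1}$ with $\calS_{\geq 1}=\cup_{i\geq 1}\calS_i$. Since $2\zeta w>\sfa(\bbB_1)$, any two polygons in $\calS_0$ have combined area exceeding $\sfa(\bbB_1)$ and must overlap; compatibility then forces $\calS_0$ to be a nested stack of cardinality at most $\delta_\beta/(\zeta w)$. The stack analysis of Section~\ref{sec:var}, together with Lemma~\ref{lem:a-bound} extended to include both $\calL_\ell^1$ and $\calL_\ell^2$ type optima, yields
\[
\taub(\calS_0)+\frac{(\delta_\beta-\sfa(\calS_0))^2}{2D_\beta}\geq\taub(a^*)+\frac{(\delta_\beta-a^*)^2}{2D_\beta},
\]
so it suffices to check that every polygon $\gamma\in\calS_{\geq 1}$ adds a strictly positive amount to ${\rm EC}$. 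Completing the square around the optimal support line and using $(\delta_\beta-a^*)/D_\beta=\taub(\sfe)/r^{2,*}(a^*)$ from \eqref{eq:rad-eq}, the contribution of $\gamma$ with $b=\sfa(\gamma)$ and $t=\taub(\gamma)$ is at least $t-b\taub(\sfe)/r^{2,*}(a^*)+b^2/(2D_\beta)$. Applying the Wulff isoperimetric bound $t\geq 2\taub(\sfe)\sqrt{wb}$ when $r^{2,*}(a^*)$ is bounded away from $0$ (which holds by \eqref{eq:rad-star-a}), and otherwise falling back on the large-contour length estimate $\taub(\gamma)\geq\taub(\sfe)\epsilon_\beta$ from \eqref{eq:large-small} combined with the quadratic term $b^2/(2D_\beta)$ to absorb the linear discount, one recovers a per-polygon lower bound of $c_\beta\zeta^{(i+1)/2}$ for some $c_\beta>0$. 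Consequently ${\rm EC}(\calS)\geq c_\beta m\zeta^{(d\beta+1)/2}$ when $|\calS_i|\geq m$ for some $1\leq i\leq d\beta$, and ${\rm EC}(\calS)\geq c_\beta\zeta^{(d\beta+2)/2}$ when $\calS_i$ is non-empty for some $i>d\beta$; both are strictly positive for each fixed $\beta$ sufficiently large.

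For the chromatic number bound, observe that for $\calS\notin\frB_{d,m}(\beta)$ one has $|\calS_{\geq 1}|\leq\beta dm$, while $|\calS_0|$ is $O(1)$ by the cardinality bound above. The augmented graph $\calG_0$ is a rooted tree (nested-contour structure with root $\partial B_N$), and its maximum vertex degree is at most $\beta dm+O(1)$. Since trees are bipartite, K\"onig's edge-colouring theorem identifies the chromatic index of $\calG_0$ with its maximum degree, giving $\kappa_\calG\leq\beta dm+2$, uniformly in $A$. The main obstacle is establishing positivity of the per-polygon excess cost $c_\beta$ uniformly over $A\geq 0$: the case of small $a^*$ (where $r^{2,*}(a^*)$ could be close to zero and the naive first-order linear-in-$b$ estimate degenerates) requires careful interleaving of the Wulff isoperimetric bound, the large-contour length floor \eqref{eq:large-small}, and the quadratic absorbing term $b^2/(2D_\beta)$; the necessary uniform control comes from \eqref{eq:rad-star-a} and Lemma~\ref{lem:difference}.
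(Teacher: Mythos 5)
Your macroscopic strategy---Bayes with Proposition~\ref{prop:lb-astar} for the denominator, then exhibiting a strictly positive excess cost for bad skeleton collections---is the same as the paper's reduction to \eqref{eq:Bad-bound}. But there are two genuine gaps in the execution.

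The more serious one is the claim that the joint weight ``factorises up to $\smo{N}(1)$ slippage'' into $\prod_v w_\beta^{\sff}(\Gamma_v\stackrel{\ell_N}{\sim}\gamma_v)$. The discrepancy between $\sum_{\calC\not\sim\Gamma}\Phi_\beta(\calC)$ and $\sum_v\sum_{\calC\not\sim\Gamma_v}\Phi_\beta(\calC)$ is governed by clusters shared between pairs of contours (see \eqref{eq:Phi-weight-bound}), and when contours in a nested stack touch---which compatibility explicitly allows and is typical---this discrepancy is of order $\mathrm{e}^{-c\beta}\sum_v|\Gamma_v|\cong \mathrm{e}^{-c\beta}N$. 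That is an $N$-\emph{independent} multiplicative correction to the exponent, not $\smo{N}(1)$; it is of exactly the same order as the single-contour bound \eqref{eq:DKS-one} and as the excess cost you are trying to extract, so it cannot simply be waved away. Controlling it is precisely what the chromatic-number/decoupling machinery of Subsections 5.4--5.6 does, but invoking that machinery here would be circular, since the present proposition is what establishes the bound on $\kappa_\calG$. The paper breaks the circle inductively: the big-area part $\calS_0$ compiles into an ordered stack whose graph is a line segment (edge chromatic number $2$), so Theorem~\ref{thm:entropic} applies directly; the remaining small-scale polygons are handled through the \emph{conditional} cluster-expansion estimate \eqref{eq:Interm-bound1}, and one iterates, incrementing the available chromatic bound by $m$ at each stage. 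Moreover, the choice of $m$ in \eqref{eq:rc-bound} is engineered to produce a \emph{multiplicative} surface-tension excess $r>1$ for the bad part, which is robust against $\mathrm{e}^{-c\beta}$-sized corrections; your additive per-polygon estimate of order $c_\beta\zeta^{(i+1)/2}$ is not obviously uniformly larger than those corrections over the full range $i\leq d\beta$.

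The second gap: the augmented graph $\calG_0$ is not a tree. If $\Gamma_1$ contains two disjoint contours $\Gamma_2,\Gamma_3$ in its interior, then $\Gamma_2\sim\Gamma_3$ (a path inside $\mathring\Gamma_1$ joins them without crossing $\Gamma_1$), which together with $\Gamma_1\sim\Gamma_2$, $\Gamma_1\sim\Gamma_3$ gives a triangle. K\"onig's theorem therefore does not apply. This is repairable via Vizing's theorem or the complete-graph bound of Theorem~\ref{coloring simplex}, but as written the chromatic bound is unjustified, and the cardinality bound $|\calS_0|\leq\delta_\beta/(\zeta w)$ is also not correct (one can nest arbitrarily many polygons of area slightly above $\zeta w$).
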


\begin{proof}[Proof of Proposition~\ref{thm:bad-stacks}]
{
Note that the estimate $\kappa_\calG \leq \beta d m +2$ on the chromatic number is
obtained from \eqref{eq:bad-stacks} as follows.  For good skeleton collections  the union
$\mathcal{S}_{1}\cup\mathcal{S}_{2}\cup...$
contains at most $\beta d m $ loops, while the collection  of large
level lines $\Gamma^{(0)} \stackrel{\ell_N}{\sim} \mathcal{S}_{0}$ is an
ordered stack. Therefore we have to estimate the edge chromatic number of a graph
with at most $\beta d m +2$ vertices, which we do by using our
combinatorial theorem above.
}

{
In view of the lower bound \eqref{eq:lb-astar} it would be enough
to prove the following:
There exists $c = c_\beta >0$ such that
If
$\calS\in \frB_{d , m} (\beta )$
\be{eq:Bad-bound}
\limsup_{N\to\infty} \max_{\calS\in \frB_{d , m} (\beta )}
\lb
\frac{1}{N}
\log\lb \bbQ_{N, \beta }\lb \Gamma\stackrel{\ell_N}{\sim}
\calS \rb \rb  + \taub \lb \sfa \lb \calS\rb\rb \rb
< - c_\beta
\ee
Let $\calS = \lbr \gamma_u\rbr\in \frB_{d , m} (\beta )$ be a bad collection
of skeletons.
 Consider its decomposition \eqref{eq:L-decomp}.  For any
 $k\geq 0$ set $\calS^{(k)}_l = \calS_0\cup\dots \cup\calS_k$
 and $\calS_s^{(k)} = \calS_{k+1}\cup\dots $.
 We shall prove \eqref{eq:Bad-bound} by a gradual reduction procedure using the
 following identity: Given
 $k\geq 0$,  the decomposition $\calS = \calS^{(k)}_l \cup \calS^{(k)}_s$ induces
 the decomposition
 $\Gamma= \Gamma^{(k)}_l\cup \Gamma^{(k)}_s$ of any collection $\Gamma\stackrel{\ell_N}{\sim}\calS$
 of
 large level lines.
 Then,
 \be{eq:Cond-contour}
 \begin{split}
 &\bbQ_{N , \beta}\lb
 \Xi \geq \rho_\beta N^3 + AN^2\, ;\,
 \Gamma \stackrel{\ell_N}{\sim}\calS\rb \\
 &=
 \sum_{\Gamma\stackrel{\ell_N}{\sim}\calS}
 \bbQ_{N, \beta}\lb
 \Xi \geq \rho_\beta N^3 + AN^2\,  \ \Big|\, \Gamma^{(k)}_l\cup \Gamma^{(k)}_s \rb
 \bbQ_{\beta , N} \lb \Gamma^{(k)}_s \big| \Gamma^{(k)}_l\rb
 \bbQ_{\beta , N} \lb \Gamma^{(k)}_l\rb.
 \end{split}
 \ee
 The conditional probability
 $\bbQ_{N , \beta}\lb\, \cdot\, \big|\, \Gamma\rb$ is a straightforward modification
 of \eqref{eq:Plarge}: Given a splitting $\Gamma\cup\Gamma^\prime$ of a compatible family of
 large contours, or, alternatively a splitting $\calV\cup\calV^\prime$ of the
 vertices of the associated
 graph,
 \begin{equation}
\label{eq:Plarge-cond}
\mathbb{Q}_{N, \beta} \left(  \Gamma^\prime \, \big|\, \Gamma\right) \,
{
\eqvs
}
\,
\mathrm{exp}%
\left\{  -\beta\sum_{v\in \calV^\prime}\left\vert \Gamma_{v}\right\vert -
\sumtwo{ \mathcal{C}\subset\Lambda_{N}}{\mathcal{C} \sim \Gamma}
\1_{\lbr \mathcal{C}\not \sim \Gamma^\prime \rbr}
 \Phi_{\beta}(\mathcal{C})\right\}  .
\end{equation}
We shall rely on the following upper
bounds on conditional weight which holds for all $\beta$ and $N$
sufficiently large:
By
\eqref{eq:clusterweight} and \eqref{eq:twopoint-st},
\be{eq:Interm-bound1}
\frac{1}{N} \log
\lb
\sum_{\Gamma^\prime \stackrel{\ell_N}{\sim} \calS^\prime }
\bbQ_{N, \beta}
\lb
 \Gamma^\prime \big| \Gamma
 \rb
 \rb
 \leq
 - \lb
 1 - \bigof{{\rm e}^{-4\beta}}
 \rb
 \taub \lb \calS^\prime  \rb .
\ee
for any $\Gamma$ and any collection
$\calS^\prime$  of
closed polygonal lines.
}


Let us now turn to proving  {\eqref{eq:Bad-bound}}
(and consequently \eqref{eq:bad-stacks})
proper.

\noindent
\step{1}.
{
 Let us  explain how we choose $m$ in $\frB_{d, m}$.
 We can fix (independently of $\beta$
two numbers $r >1 $ and $c<\infty$ such that
\be{eq:rc-bound}
\taub\lb \calS^{(k)}_s\rb \geq r \taub\lb \sfa\lb \calS^{(k)}_s\rb\rb,
\ee
whenever  $\calS$ and a number $k\geq 0$ are
such that
\be{eq:areas}
\sfa\lb \calS^{(k)}_s\rb \geq c\zeta^k.
\ee
Indeed, by construction, the
areas of loops from   $\calS^{(k)}_s$ are bounded above by $\zeta^{k+1}$. Hence,
\[
 \taub \lb \calS^{(k)}_s \rb \geq 2 \taub (\sfe )
 \left\lfloor \frac{a}{\zeta^{k+1}} \right\rfloor
 \sqrt{ w_\beta  \zeta^{k+1}}
\]
whenever $\sfa\lb \calS^{(k)}_s \rb = a$. This should be compared with $\taub (a )$
which equals to $2 \taub (\sfe )\sqrt{a w_\beta } $ if $a\leq w_\beta $ and,
otherwise, bounded above
by $2 \taub (\sfe )\lb  \left\lfloor \frac{a}{w_\beta }  \right\rfloor +1\rb w_\beta $.
}

Note that \eqref{eq:areas} will hold if
$\abs{\calS_{k+1}} \geq \frac{c}{\zeta w}$.
We set $m = \frac{c}{\zeta w}$.
\smallskip

\noindent
{
\step{2}. The first consequence of \eqref{eq:rc-bound} is that we can rule out
collections $\calS$ with
$\sfa \lb \calS^{(0)}_s\rb \geq c$. Indeed, let $\calS$ be such a collection
of polygonal lines. By construction $\calS_0$ is compatible with  an ordered
stack of large contours, hence its graph is just a line segment.
Hence, the decoupling
bound \eqref{eq:term-i} and Theorem~\ref{thm:entropic} imply:
\be{eq:big-stack}
 \frac{1}{N}\log\lb \bbQ_{N, \beta}
 \lb \Gamma \stackrel{\ell_N}{\sim}\calS_0\rb\rb
 \leq
 -
 \lb 1 - \smo{N}(1)\rb
 \sum_{\gamma\in \calS_0}\lb \taub \lb \sfa (\gamma )
 \rb
 +\alpha_0 \lb \Omega_\beta (\gamma )\rb
 \rb
\ee
On the other hand, \eqref{eq:Interm-bound1} and  \eqref{eq:rc-bound} imply that
\be{eq:cond-not}
 \frac{1}{N}\log\lb
 \bbQ_{N, \beta}
 \lb \Gamma^{(0)}_s
 \stackrel{\ell_N}{\sim}
 \calS^{(0)}_s\, \big| \, \Gamma
 \rb
 \rb
 \leq -r
 \lb 1 - \bigof{{\rm e}^{-4\beta}}\rb
 \taub \lb \sfa\lb \calS^{(0)}_s\rb\rb.
\ee
for all compatible collections of large contours
$\Gamma \stackrel{\ell_N}{\sim} \calS_0$.
The last expression is strictly smaller than
{$ - \taub \lb \sfa\lb \calS^{(0)}_s\rb\rb$}
for all $\beta$ satisfying
\[
r \lb 1 - \bigof{{\rm e}^{-4\beta}}\rb >1 .
\]
Hence \eqref{eq:Bad-bound} .
}

{
There are two implications of the above computation
which hold for all
$\beta$ sufficiently large.
First of all pick $\nu_{1} > \nu$ (see \eqref{eq:difference-nu}).
Then
for any $A$ we can restrict attention
to skeleton collections $\mathcal{S}$ satisfying
\be{eq:calLnot-bound}
\sfa \lb \calS_0\rb \geq \delta_\beta - {\rm e}^{\nu_1\beta}.
\ee
Indeed, recall $\beta$-independent constant $c$ which was
defined via \eqref{eq:rc-bound}. In view of  Proposition~\ref{thm:bad-stacks} we
may restrict attention to
$\sfa\left(  \mathcal{S}_{s}^{\left(  0\right)  }\right) \leq c$. Which means that
if \eqref{eq:calLnot-bound}is violated, then
\[
 \sfa \lb \calS\rb \leq \sfa_\beta^* (\delta_\beta )   - {\rm e}^{\nu\beta} +c
\]
for all $\beta$ sufficiently large.
On the other hand, as we have already mentioned, $\mathcal{S}_{0}$ has
graph structure of an ordered stack or, in other words, one-dimensional segment,
 and \eqref{eq:big-stack} holds. Therefore, \eqref{eq:calLnot-bound} is secured by
Lemma~\ref{lem:a-bound} and \eqref{eq:difference-nu}
(and, of course, lower bound \eqref{eq:lb-astar}).
}

{
Next, assume  \eqref{eq:calLnot-bound}. Then, in view of the
upper bound on conditional
weights \eqref{eq:Interm-bound1},
and proceeding as in derivation of \eqref{eq:IntermBound}, one
can fix $d< \infty$ and
rule out
 skeletons  $\gamma$ with $\sfa ( \gamma  ) <\xi^{\beta d}w$.
}

 As a result we need to consider only bad collections
 $\calS$ which satisfy $\calS_i = \emptyset$ for any
 $i >\beta d$, but which still violate \eqref{eq:cardLi}.

 \noindent
 {
 \step{3}. We proceed by induction. Assume that $\calS$ is such that
 the cardinalities
 $\abs{\calS_1}, \dots , \abs{\calS_k} \leq m$.
 Then $\calS$ may be ignored if
 $\abs{\calS_{k+1}} >m$. Indeed, the latter would imply that
 $\sfa\lb \calS^{(k)}_s\rb \geq c\zeta^k$, and hence  \eqref{eq:rc-bound} holds.
 Consequently, as in the case of \eqref{eq:cond-not}
\be{eq:cond-k}
 \limsup_{N\to\infty}
 \frac{1}{N}\log\lb
 \bbQ_{N, \beta} \lb \Gamma^{(k)}_s \stackrel{\ell_N}{\sim}
 \calS^{(k)}_s\, \big| \, \Gamma\rb
 \rb
 \leq -r
 \lb 1 - \bigof{{\rm e}^{-4\beta}}\rb
 \taub \lb \sfa\lb \calS^{(k)}_s\rb\rb.
\ee
for all collection  $\Gamma \stackrel{\ell_N}{\sim} \calS_l^{(k)}$.
}

{
On the other hand, by induction assumption, the
chromatic number of $\calL^{(k)}_l$ is bounded above by
 {$km +2 \leq \beta d m  +2 := \kappa_\beta$.}
Hence, Theorem~\ref{thm:entropic} applies, and in view of
the decoupling
bound \eqref{eq:term-i}, we infer:
\be{eq:big-stack-k}
 \frac{1}{N}\log\lb \bbQ_{N, \beta}
 \lb \Gamma \stackrel{\ell_N}{\sim}\calS_l^{(k)}\rb\rb
 \leq
 -
 \lb 1 - \smo{N}(1)\rb
 \sum_{\gamma\in \calS_l^{(k)}}\lb \taub \lb \sfa (\gamma )
 \rb
 +\alpha_0 \lb \Omega_\beta (\gamma )\rb
 \rb
\ee
Together with \eqref{eq:cond-k} this implies \eqref{eq:Bad-bound}.
}
\end{proof}

\subsection{Proof of Theorem~\ref{thm:UB-L-net}}
{
Let us complete the proof of Theorem~\ref{thm:UB-L-net} and hence of
 Theorem~\ref{thm:B}. As we have seen in the previous subsection for
 each $\beta$ sufficiently large
 we may ignore skeleton collections
 with chromatic numbers exceeding
 {$\kappa_\beta =\beta d m  +2$.}
 If, however, the skeleton $\calS$ is {\em good}, that is
 if the chromatic number of $\calS$ is less or equal to $\kappa_\beta$,
 then \eqref{eq:chi-cond} is satisfied,
 and, in view of \eqref{eq:Gen-Holder}, \eqref{eq:term-i}
 and \eqref{eq:entropic} we conclude
 that
 \be{eq:good-L-bound}
 \frac{1}{N} \log \lb \bbQ_{N, \beta}
 \lb \Gamma\stackrel{\ell_N}{\sim} \calS\rb\rb  \leq -
\lb
 \sum_{\gamma_u\in \calS}
 \lb
 \taub
 \lb \sfa (\gamma_u )
 \rb
 + \alpha_0 \lb  \Omega_\beta (\gamma_u )\rb
 \rb
 \rb
 (1- \smo{N} (1)) .
 \ee
 In \eqref{eq:good-L-bound} there is the same
 correction term $\smo{N}(1) \to 0$ for {\em all} good skeletons.
 Since,
 \[
 \calE_\beta \lb \calS~|~ \delta_\beta \rb
 = \sum_{\gamma_u \in \calS} \Omega_\beta (\gamma_u )
 +
 \lb
\sum_{\gamma_u\in\calL} \taub (\sfa (\gamma_u ) ) +
\frac{(\delta_\beta - \sfa (\calL ))^2}{2 D_\beta }
\rb ,
\]
it remains to apply the quantitative isoperimetric bound \eqref{eq:Iso-Stack-skel}.
}

\section{Two interacting contours}
\label{sec:Two}

In this concluding section we sketch the proof of Theorem~\ref{thm:entropic}.
The proof relies on  the skeleton calculus  developed in
\cite{DKS},  Ornstein-Zernike theory and random walk representation
of polymer models \cite{IV08}, which, in the particular case of Ising
polymers, was refined and
adjusted in \cite{IST15}.  We shall repeatedly refer to these
papers for missing
details.

Throughout this section we shall assume that the constants $\chi >1/2$ and
$\kappa_\beta$  are fixed
 so that \eqref{eq:chi-cond} holds.

\subsection{Low temperature skeleton calculus and modified surface tension.}
\label{sub:DKS}
We need to recall some ideas and techniques introduced in \cite{DKS}.

Consider an $\ell_N$-skeleton
$\gamma = \lb\sfu_0, \sfu_1, \dots , \sfu_n\rb$. It has $n+1$ edges
\[
 e_0 = (\sfu_0 , \sfu_1), \dots , e_n = (\sfu_n , \sfu_0 ).
\]
The last edge $e_n$ might have a shorter length than $\ell_N$, but for
the sake of the exposition we shall ignore the corresponding negligible
corrections. The edges of $\gamma$ are classified into being {\em good} or
{\em bad} as follows: Fix once and for all some small angle $\theta >0$
(note that the value of $\theta$ and hence the classification of edges we
are going to explain does not depend on $\beta$). With each edge
$e = (\sfu , \sfv ) $ we associate a diamond shape $\sfD_\theta (e )$,
\[
 \sfD (e ) = \sfD_\theta (\sfu , \sfv )= \lb \sfu + \calY_{\sfv - \sfu}\rb \cap
 \lb \sfv + \calY_{\sfu - \sfv}\rb ,
\]
where for any $\sfx\in\bbR^2\setminus 0$ we use $\calY_\sfx$ to denote
the symmetric cone of opening $\theta$ along the ray passing through $\sfx$.

An edge $e_i $ of $\gamma$ is called good if
\be{eq:good-edge}
\sfD (e_i )\cap \sfD (e_j) = \emptyset\quad \text{for any $j\neq i$}.
\ee
Otherwise the edge is called {\em bad}. We use $\frg (\gamma )$ and
$\frb (\gamma )$ for, respectively, the sets of good and
bad edges of $\gamma$.

{
Let $\Gamma$ be a large contour, and $\gamma$ be its skeleton, having the set $\frb (\gamma )$
of bad bonds. We denote by $\Gamma_{\frb}$ the portion of $\Gamma$, corresponding to bonds in $\frb (\gamma )$.
}

The modified surface tension $\hat \tau_{\beta} (\gamma )$ is defined as follows:
\be{eq:taub-hat}
\hat \tau_{\beta} (\gamma ) = \taub (\gamma ) - \sum_{e\in \frb (\gamma )}
\psi_\beta (e ) .
\ee
In its turn the function $\psi_\beta \ge 0$ is defined via functions
(compare with \eqref{eq:twopoint-st})
\begin{equation}
\label{eq:twopoint-st-hat}
G_{\beta}^*(x):= \sum_{\gamma: 0\to  x} w_{\beta
}^\sff (\eta)
{\rm e}^{\kappa_\beta \sum_{\calC\not\sim \eta } \abs{\Phi_\beta (\calC )}}
\quad\text{and}\quad  \tau_{\beta}^*(x) = -\lim_{n \to\infty}\frac
1{n}\log  G_{\beta}^* (\lfloor n x\rfloor),
\end{equation}
as
\be{eq:psi-beta}
\psi_\beta (x ) = \taub (x ) - 
 \tau_{\beta}^* (x).
\ee
The
skeleton calculus developed in \cite{DKS} implies the
following two crucial bounds, which hold uniformly in $\sfa (\gamma ) >a_0$:
\\
\be{eq:DKS-bound1}
\hat \tau_{\beta} (\gamma ) \geq \taub (\sfa (\gamma )) +
\alpha_0 \lb \Omega_\beta (\gamma )\rb .
\ee
 and
{
\be{eq:DKS-bound2}
 \left\vert
 \ln \frac
 { w_\beta^\sff
 \lb
 \1_{\lbr \Gamma \stackrel{\ell_N}{\sim}\gamma\rbr}
 {\rm e}^{\kappa_\beta \sum_{\calC\not\sim \Gamma_{\frb} } \abs{\Phi_\beta (\calC )}}
 \rb}
 { \prod_{(\sfu , \sfv )\in \frg (\gamma )} G_\beta (\sfv - \sfu )
 \prod_{(\sfu , \sfv )\in \frb (\gamma )}  G_\beta^* (\sfv - \sfu )}
 \right\vert
\leq {\smo{N} (1) \taub (\gamma )N}.
\ee
}
{
The estimate \eqref{eq:DKS-bound1} with the function $\alpha_0(x)=\frac{1}{2} x$ is
nothing else but the estimate (2.16.1) from the Lemma 2.16 of   \cite{DKS}. (Our $\Omega$ is
what is called $\Delta$ there, and the function $n_{\delta}$ there vanishes in the case of our interest.)
The estimate  \eqref{eq:DKS-bound2} is a very special case of the Theorem 4.16 of   \cite{DKS}, which establishes the
asymptotic independence
of the surface tension on the shape of the volume.
}

{
In view of the Ornstein-Zernike asymptotics (for instance (3.4) in \cite{IST15}) of
low temperature two-point functions $G_\beta$ in
\eqref{eq:twopoint-st} and $ G_\beta^*$ in \eqref{eq:twopoint-st-hat}
the upper bound \eqref{eq:DKS-one} {readily follows} from
 \eqref{eq:DKS-bound1} and \eqref{eq:DKS-bound2}.
 }
\subsection{Decoupling upper bound for two interacting skeletons.}
Consider two $\ell_N$-skeletons $\gamma_1$ and $\gamma_2$ as in the
formulation of Theorem~\ref{thm:entropic}. Upper bound \eqref{eq:DKS-bound2}
implies:
\be{eq:Good-bad-two}
\begin{split}
&w_\beta^{\sff}\otimes w_\beta^{\sff}
\lb
\1_{\lbr \Gamma_1 \sim \Gamma_2 \rbr}
\1_{\lbr \Gamma_1\stackrel{\ell_N}{\sim} \gamma_1 \rbr}
\1_{\lbr \Gamma_2\stackrel{\ell_N}{\sim} \gamma_2 \rbr}
{\rm e}^{\kappa_\beta \sum_{\calC\not\sim (\Gamma_1, \Gamma_2 )}
\abs{\Phi_{\beta}(\mathcal{C})}}
\rb\\
&\leq {\rm e}^{\smo{N} (1) N (\taub (\gamma_1 ) +\taub (\gamma_2 ))}
\prod_{(\sfu, \sfv )\in \frb (\gamma_1 )\cup\frb (\gamma_2 )}
\hat G_\beta (\sfv - \sfu )\times \\
&\bigotimes_{e \in \frg (\gamma_1 ) ,f \in \frg (\gamma_2 ) }
w_\beta^\sff
\lb
\prod_{e, f} \1_{\lbr \eta_e\stackrel{\ell_N}{\sim} e\rbr}
 \1_{\lbr \eta_f\stackrel{\ell_N}{\sim} f\rbr}\1_{\lbr \eta_e \sim \eta_f\rbr}
 {\rm e}^{\kappa_\beta \sum_{e, f} \sum_{\calC\not\sim (\eta_e, \eta_f)}
 \abs{\Phi_\beta (\calC )}} .
\rb
\end{split}
\ee
Above $\eta_e \stackrel{\ell_N}{\sim} e = (\sfu , \sfv )$ means that
$\eta_e$ is an admissible path from $\sfu$ to $\sfv$ which is, in addition,
compatible with the $\ell_N$-skeleton construction. As before,
$\calC\not\sim (\eta_e, \eta_f)$ means that $\calC$ is not compatible with
both $\eta_e$ and $\eta_f$.

As in the derivation of \eqref{eq:DKS-bound2} it is possible to check that
the contribution coming from $\sum_{\calC\not\sim (\eta_e, \eta_f)}
 \abs{\Phi_\beta (\calC )}$ could be ignored whenever
 $\sfD_\theta (e )\cap \sfD_\theta (f) =\emptyset$. In the latter case let us
 say that the edges $e$ and $f$ {\em are not associated}. Otherwise
 we say that $e\in \frg (\gamma_1 )$ and $f\in \frg (\gamma_2 )$ are
 {\em associated}. Since by construction
 $\sfD_\theta (f )\cap \sfD_\theta (f^\prime) =\emptyset$ for any two
 good edges $f, f^\prime \in \frg (\gamma_2 )$, any good edge
 $e\in \frg (\gamma_1 )$ can be associated to at most $m_\theta$ different
 good edges of $\frg (\gamma_2 )$. Proceeding as in the derivation
 of \eqref{eq:term-i} we conclude that we need to derive the
 upper bound on
 \be{eq:two-edges}
 w_\beta^\sff\otimes w_\beta^\sff
 \lb
 \1_{\lbr \eta_e\stackrel{\ell_N}{\sim} e\rbr}
 \1_{\lbr \eta_f\stackrel{\ell_N}{\sim} f\rbr}\1_{\lbr \eta_e \sim \eta_f\rbr}
 {\rm e}^{m_\theta \kappa_\beta  \sum_{\calC\not\sim (\eta_e, \eta_f)}
 \abs{\Phi_\beta (\calC )}} ,
 \rb
 \ee
 uniformly in $\sfD_\theta (e )\cap \sfD_\theta (f) \neq \emptyset$.

 Since $m_\theta$ does not depend on $\beta$,
 $\kappa_\beta^\prime = m_\theta \kappa_\beta$ satisfies \eqref{eq:chi-cond}
 with any $\chi^\prime < \chi$.  Therefore, Theorem~\ref{thm:entropic}
 would be a consequence of the following claim:

 \begin{proposition}
  \label{prop:two-edges-bound}
  Under the conditions of Theorem~\ref{thm:entropic}
  \be{eq:two-edges-bound}
  w_\beta^\sff\otimes w_\beta^\sff
 \lb
 \1_{\lbr \eta_e\stackrel{\ell_N}{\sim} e\rbr}
 \1_{\lbr \eta_f\stackrel{\ell_N}{\sim} f\rbr}\1_{\lbr \eta_e \sim \eta_f\rbr}
 {\rm e}^{\kappa_\beta  \sum_{\calC\not\sim (\eta_e, \eta_f)}
 \abs{\Phi_\beta (\calC )}}
 \rb \leq {\rm e}^{- \ell_N \lb \taub (\sfv - \sfu ) + \taub (\sfw -\sfz )\rb
 (1-\smo{N} (1 ))} ,
  \ee
uniformly in $\beta$ large and uniformly in
all pairs of $\ell_N$-edges $e = (\sfu , \sfv )$ and
$f = (\sfv ,\sfz )$ such that
$\sfD_\theta (e )\cap \sfD_\theta (f) \neq \emptyset$.
 \end{proposition}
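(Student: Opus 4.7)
The plan is to reduce the question to one about two interacting effective random walks via the Ornstein--Zernike (OZ) representation of the Ising polymer two-point function $G_\beta$, as developed in \cite{IV08} and refined in the low-temperature Ising polymer setting of \cite{IST15}. Under the free weight $w_\beta^\sff$, an admissible path $\eta_e$ realizing the edge $e$ decomposes, after projecting onto the direction of $e$, into a concatenation of OZ-irreducible pieces; the resulting measure, normalized by $G_\beta(\sfv-\sfu)$, is (up to a polynomial prefactor) the law of a directed random walk whose log-Laplace transform equals $-\taub$ in the relevant direction and whose transverse increments have exponential tails uniformly in $\beta$ large. Applying this representation to both $\eta_e$ and $\eta_f$, and noting that by the hypothesis $\sfD_\theta(e)\cap\sfD_\theta(f)\neq\emptyset$ the two walks start and end within bounded $\ell^\infty$-distance while each running for a time $\asymp\ell_N$, it suffices to control the partition function of two coupled OZ random walks.

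The interaction factor $\mathrm{e}^{\kappa_\beta\sum_{\calC\not\sim(\eta_e,\eta_f)}|\Phi_\beta(\calC)|}$ is a Mayer-type pair perturbation. Assumption \eqref{eq:chi-cond} says that cluster weights decay as $\kappa_\beta|\Phi_\beta(\calC)|\leqs\mathrm{e}^{-\chi\beta(\mathrm{diam}_\infty(\calC)+1)}$ with $\chi>1/2$, and only clusters non-compatible with \emph{both} paths contribute. Hence the induced attractive pair potential $V_\beta(x,y)$ between points $x\in\eta_e$ and $y\in\eta_f$ has uniform exponential decay $\mathrm{e}^{-\chi\beta|x-y|_\infty}$. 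Following the same surgery that leads from $\taub$ to the modified tension $\tau_\beta^*$ in \eqref{eq:twopoint-st-hat}--\eqref{eq:psi-beta}, this perturbation can be absorbed into a Gibbs modification of the product OZ law whose single-walk cost per step is at worst $\tau_\beta^*$ instead of $\taub$; the deficit $\psi_\beta$ satisfies $\ell_N\psi_\beta=\smo{N}(1)\,\ell_N\taub(\sfe)$ uniformly in $\beta$ large, so it is harmless on the exponential scale.

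The compatibility condition $\eta_e\sim\eta_f$ translates, in the random-walk picture, into a hard-core non-crossing constraint on the pair of OZ walks. Since they start within bounded $\ell^\infty$-distance, this is precisely the setting of Ferrari--Spohn-type entropic-repulsion estimates for non-intersecting OZ walks (see \cite{IST15} and the survey \cite{IV16}), which assert that, even after the above Gibbs perturbation, the probability of staying non-intersecting for a time $\asymp \ell_N$ decays only polynomially in $\ell_N$, i.e.\ contributes a factor $\mathrm{e}^{\smo{N}(1)\ell_N}$. Combining the OZ asymptotics $G_\beta(x)=\mathrm{e}^{-\taub(x)(1+\smo{N}(1))}$ with the previous two reductions yields
\[
\text{LHS of \eqref{eq:two-edges-bound}}\,\leq\,\mathrm{e}^{-\ell_N(\taub(\sfv-\sfu)+\taub(\sfw-\sfz))(1-\smo{N}(1))},
\]
which is the claim.

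The main obstacle is the tension between the attractive interaction, which wants to pull the two walks together and would a~priori only cost the smaller mass $\tau_\beta^*<\taub$ per step, and the non-intersection constraint, which forces transverse separation and should restore the full $\taub$. The role of the lower bound $\chi>1/2$ in \eqref{eq:chi-cond} is exactly to make the attractive potential integrable against the OZ transverse fluctuations, uniformly in $\beta$ large: it guarantees that $\sum_k |V_\beta(k)|\,k$ stays bounded, so the perturbed pair law still obeys the same OZ invariance principle as the unperturbed one and the classical Ferrari--Spohn non-intersection asymptotics carry over. Implementing this balance cleanly, which requires verifying the invariance principle for the perturbed coupled walk and deducing the corresponding repulsion estimate along the lines of \cite{IST15}, is where the technical bulk of the proof lies.
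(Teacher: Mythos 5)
Your proposal correctly identifies the main ingredients (the OZ/effective random walk representation, the exponential decay of the pair potential coming from \eqref{eq:chi-cond}, the non-crossing constraint, and the reliance on \cite{IST15}), but the way you combine them has a genuine gap, and it is not the route the paper takes.

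The problematic step is the claim that the interaction factor ``can be absorbed into a Gibbs modification of the product OZ law whose single-walk cost per step is at worst $\tau_\beta^*$ instead of $\taub$,'' followed by ``the deficit $\psi_\beta$ satisfies $\ell_N\psi_\beta = \smo{N}(1)\,\ell_N\taub(\sfe)$.'' This is false: $\psi_\beta = \taub - \tau_\beta^*$ is, per \eqref{eq:psi-beta}, a \emph{norm difference}, hence positively homogeneous of degree one, so $\psi_\beta(e) = \ell_N\,\psi_\beta(\hat e)$ with a coefficient $\psi_\beta(\hat e) > 0$ that depends on $\beta$ but \emph{not} on $N$. Thus if you replace $\taub$ by $\tau_\beta^*$ on both walks you incur a loss $2\ell_N\psi_\beta(\hat e)$, which is of order $\ell_N$ and cannot be absorbed into the $(1-\smo{N}(1))$ correction in \eqref{eq:two-edges-bound}. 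The non-crossing constraint only restores an $O(\log\ell_N)$ contribution in the exponent (this is the content of the Ferrari--Spohn-type repulsion you invoke), which is far too small to cancel a linear-in-$\ell_N$ deficit. So the ``absorb the potential into each walk, then use non-intersection'' scheme does not close. In the paper, the $\tau_\beta^*$-modification is used for a different purpose — only on the sparse set of \emph{bad} skeleton edges, which is where the $\smo{N}(1)$ really does come from.

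The paper's actual mechanism is a fixed-point / recursion argument (subsection on two interacting contours). After passing to the effective random walk picture, the partition function $\mu_\beta(\ell_N|z)$ for two coupled walks with the interaction present satisfies a renewal-type inequality of the form
\[
\mu_\beta \;\leq\; 1 + \mu_\beta\, c_1\,{\rm e}^{-2\chi\beta}\,\max_z\sum_n \bbE_{\beta,z}\!\left[{\rm e}^{-\nu\beta\sfZ_n};\,\calR_+^n\right],
\]
where $\sfZ_n$ is the non-crossing difference walk. The recursion contracts, giving a $\beta$-uniform bound on $\mu_\beta$, provided the coefficient is $<1$. The crux is to estimate $\sum_n\bbE_{\beta,z}[{\rm e}^{-\nu\beta\sfZ_n};\calR_+^n]$. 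The point is that the walk is very lazy: steps are $0$ with probability $1-p_\beta$ where $p_\beta = O({\rm e}^{-\beta})$. A Wald-type decomposition plus ladder-height and characteristic function estimates give that this sum is $O(1/p_\beta) = O({\rm e}^{\beta})$, \emph{not} $O(1)$. So the competition is between ${\rm e}^{-2\chi\beta}$ from the interaction and ${\rm e}^{\beta}$ from the Green's function of the non-crossing lazy walk, and this is precisely where $\chi > 1/2$ is used. Your explanation of $\chi > 1/2$ (``$\sum_k|V_\beta(k)|\,k$ stays bounded, so the perturbed pair law still obeys the same OZ invariance principle'') misses this $\beta$-asymptotic competition, which is the heart of the matter and is quite different from a Ferrari--Spohn scaling consideration (the latter describes $N\to\infty$ behavior at fixed $\beta$, not $\beta\to\infty$ behavior).
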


The estimate  \eqref{eq:two-edges-bound}  is a
manifestation of the fact that under \eqref{eq:chi-cond} entropic repulsion
between paths $\eta_e$ and  $\eta_f$  beats
 the attractive potential $
 {\rm e}^{\kappa_\beta  \sum_{\calC\not\sim (\eta_e, \eta_f)}
 \abs{\Phi_\beta (\calC )}}$. 
 As a result, typical paths
stay far apart and their contributions to the surface tension just add up.

 In the concluding Subsection~\ref{sub:two-segments}
we shall prove \eqref{eq:two-edges-bound} in the most difficult case
when $e$ and $f$ stay close to the horizontal axis. This case is the most
difficult since it corresponds to the minimal strength of entropic
repulsion between $\eta_e$ and $\eta_f$.

\subsection{Effective random walk representation.}
\label{sub:two-segments}
Consider edges $f_1 = (\sfz , \sfw)$ and  $f_2= (\sfu , \sfv)$  with
$\sfu = (0,0) := \mathsf{0}$, $\sfv = (\ell_N , 0)$, $\sfz = (0,z)$ and
$\sfw = (\ell_N , z )$. Define the event (collection of paths
$(\gamma_1 , \gamma_2 )$)
\be{eq:Tplus-two-z}
 \calT^{2}_+ = \calT^{2}_+ (\ell_N \, |\, z) =
 \lbr(\gamma_1 , \gamma_2 )~:~   \gamma_1\stackrel{\ell_N}{\sim} f_1\, ; \,
 \gamma_2\stackrel{\ell_N}{\sim} f_2\,  ;\,  \gamma_1\sim \gamma_2\rbr .
\ee
In particular, if $z\geq 0$,
 $\lbr \gamma_1 , \gamma_2\rbr \in \calT^{2}_+ \lb f_1, f_2 \rb$
implies that $\gamma_1$ ``stays above'' $\gamma_2$. Note, however that
they can share edges, and that they might have overhangs.

We claim that the following holds:
\begin{proposition}
\label{prop:tubebound}
Assume \eqref{eq:chi-cond}.
There exist a finite constant $c_+ $ such that for all $\beta$ sufficiently large,
\be{eq:tubebound}
\sup_{z\geq 0} \wbf\otimes\wbf\lb \calT^{2}_+ (\ell_N \, |\, z ) ; {\rm e}^{\, \kappa_\beta
\sum_\calC \1_{\calC\not\sim \eta_1}
\1_{\calC\not\sim \eta_2} \abs{\Phi_\beta (\calC )}}
\rb \leq
c_+
{\rm e}^{-2\tau_\beta (\sfe_1 )
{
\ell_N
}
}
\ee
as soon as  $N$ is  sufficiently large.
\end{proposition}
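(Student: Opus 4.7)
The plan is to transport the problem to an effective random-walk representation of the two paths, via the Ornstein--Zernike theory refined for low-temperature Ising polymers in \cite{IST15}. For a single path, this representation gives not merely the exponential asymptotics $G_\beta(\ell_N\sfe_1) \asymp \ell_N^{-1/2}{\rm e}^{-\taub(\sfe_1)\ell_N}$, but a decomposition of an admissible $\eta$ into a concatenation of i.i.d.\ irreducible pieces travelling in a forward cone. After the OZ change of measure, $\eta_1$ and $\eta_2$ will be represented as two independent directed random walks on $\bbZ^2$ with i.i.d.\ increments having exponential moments, going respectively from $\sfz$ to $\sfw$ and from $\sfu$ to $\sfv$. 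The compatibility event $\eta_1\sim\eta_2$, modulo the shared-bond and overhang corrections already absorbed by the skeleton calculus in \eqref{eq:DKS-bound1}--\eqref{eq:DKS-bound2}, becomes a non-crossing event for the two walks; and the exponentiated cluster factor, by \eqref{eq:chi-cond}, becomes an attractive pair potential whose per-unit-length contribution at transverse distance $d$ is at most $C\,{\rm e}^{-\chi\beta d}/\kappa_\beta$ with $\chi > 1/2$.

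I would then split into two regimes in $z$. Fix $z_0(\beta) := \lceil c_0\beta^{-1}\log\ell_N\rceil$ with $c_0$ large enough that the cumulative attractive energy between two paths staying at vertical distance $\geq z_0$ is at most $\ell_N\,{\rm e}^{-\chi\beta z_0} \leq 1$. For $z \geq z_0(\beta)$ the cluster factor then contributes only a bounded constant; the non-crossing event is automatic up to a standard moderate-deviation correction; and a direct product bound combined with the OZ upper bound on $G_\beta$ gives \eqref{eq:tubebound} with room to spare. The core case is $0 \leq z < z_0(\beta)$, where entropic repulsion must overcome the cluster attraction. Under the OZ change of measure, the difference $X_k$ of the vertical coordinates of the two walks is a centred random walk with finite exponential moments, started at $z$ and pinned at $z$ after $\sim \ell_N$ renewal steps, and $\calT^2_+$ becomes the ballot event $\lbr X_k \geq 0\ \forall k\rbr$. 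A classical Kolmogorov bound yields
\[
 \Pr\lbr X_k\geq 0\ \forall k\ \big|\ X_0 = X_{\ell_N} = z\rbr \leq C\,(1+z)/\sqrt{\ell_N},
\]
and multiplied by the unconstrained two-walk weight $\asymp \ell_N^{-1}\,{\rm e}^{-2\taub(\sfe_1)\ell_N}$ this already produces a bound much stronger than \eqref{eq:tubebound}.

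The main obstacle will be the control of the cluster factor near contacts $X_k = 0$, where the per-step attractive energy from a cluster straddling both paths is $O(1)$ rather than small in $\beta$. What saves us is that, under the non-crossing bridge, the expected number of such contacts is $O(1)$ uniformly in $0 \leq z \leq z_0(\beta)$, by a Gambler's-ruin-type renewal estimate for $X_k$; and that each contact, weighted by $\kappa_\beta\,\abs{\Phi_\beta(\calC)}$, remains summable thanks to \eqref{eq:chi-cond} with $\chi > 1/2$. This is precisely the scheme carried out in Sections~4--5 of \cite{IST15}, which I would follow closely. Combining these ingredients yields \eqref{eq:tubebound} uniformly in $z \geq 0$ and in all $N$ sufficiently large.
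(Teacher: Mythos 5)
Your proposal correctly identifies the skeleton of the argument — reduce to the Ornstein--Zernike/random-walk representation of \cite{IST15}, track the vertical separation $\sfZ_k$ of the two walks, and try to show that entropic repulsion beats the cluster attraction. But the quantitative heart of the matter is missing, and the step you lean on is actually false.

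The effective random walk here is \emph{lazy}: by \eqref{eq:stepsW}, the step $\sfW = \sfV^1 - \sfV^2$ equals $0$ with probability $1-p_\beta$, where $p_\beta = \bigof{{\rm e}^{-\beta}}$. Consequently the expected local time of $\sfZ$ at zero (or the ``number of contacts'' in your language) conditioned on the non-crossing bridge is \emph{not} $O(1)$ uniformly in $\beta$; it is of order $1/p_\beta = \bigof{{\rm e}^{\beta}}$, because once the walk hits zero it sits there for a geometric time of mean $\sim {\rm e}^{\beta}$. Your Kolmogorov/ballot estimate $C(1+z)/\sqrt{\ell_N}$ is a bound for the un-lazy walk $\sfY$ and in any case is computed under the \emph{free} law, so multiplying it by the unconstrained two-walk weight does not account for the attractive factor ${\rm e}^{\kappa_\beta\sum_\calC|\Phi_\beta(\calC)|}$ at all — it simply drops the term you are trying to control. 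Thus neither your small-$z$ nor your large-$z$ branch actually closes.

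What the paper does instead is to set up the recursion \eqref{eq:rho-beta} for $\mu_\beta = \sup_{N,z}\mu_\beta(\ell_N|z)$: one decouples a single interaction event and bounds its residual weight by $c_1{\rm e}^{-2\chi\beta}$ (coming from the smallest clusters straddling both paths, using \eqref{eq:chi-cond}) times a geometric expectation $\sum_n\bbE_{\beta,z}\bigl({\rm e}^{-\nu\beta\sfZ_n};\calR_+^n\bigr)$. Lemma~\ref{lem:sum} — the real work, via the Bernoulli/Wald embedding, ladder-epoch identities and a Fourier estimate — shows that this sum is $\bigof{1/p_\beta} = \bigof{{\rm e}^{\beta}}$ \emph{uniformly in $z$}. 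The proposition then follows because ${\rm e}^{-2\chi\beta}\cdot{\rm e}^{\beta} = {\rm e}^{(1-2\chi)\beta}\to 0$ precisely when $\chi > 1/2$, which makes the recursion contractive and yields $\mu_\beta \leq c_+$. Your proposal never articulates this $\,{\rm e}^{-2\chi\beta}\,$--versus--$\,1/p_\beta\,$ compensation — which is exactly where the hypothesis $\chi > 1/2$ enters — and instead replaces it with the incorrect assertion that the number of contacts is $O(1)$. This is the missing idea, and without it the argument does not go through.
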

Proving that $c_+$ does not depend on $\beta$ is the crux
of the matter, and it is based on a careful analysis of  non-intersection
probabilities
for effective random walks in a weak attractive potential.

Let $\calT^1 = \calT^1 (\ell_N )$ be the set of paths
$\gamma : \mathsf{0}\mapsto \sfv$ with $\sfv\cdot \sfe_1 = \ell_N$. Note that
by definition
$\lb \gamma_1 , \gamma_2 \rb \in \calT^2_+$ implies that $\gamma_2 \in \calT^1$ and
$\gamma_1\in (0,z) + \calT^1$.

Let $\calK$ be a positive (two-dimensional)
symmetric cone around $\sfe_1$ with an opening strictly between $\pi/2$ and $\pi$.
A high temperature expansions of polymer weights ${\rm e}^{\Phi (\calC )}$
leads (see (4.9) in \cite{IST15}) to the following irreducible decomposition of
decorated (open) contours $\left[ \gamma , \underline{\calC}\right]$, where
$\gamma\in \calT^1 (\ell_N )$ and $\underline{\calC}$ is a
collection of $\gamma$-incompatible clusters:
\be{eq:irred-animals}
\left[ \gamma , \underline{\calC}\right] = \fra^{\ell}\circ \fra^1\circ \dots \fra^m
\circ\fra^r .
\ee
The irreducible animals $\fra^i = \left[ \eta^i ,\underline{\calD}^i\right]$
belong to the family $\sfA = \lbr \sfa = \left[ \eta , \calD\right]\rbr$, which could be
characterized by the following two properties:

\noindent
{\bf a.} If $\eta$ is a path with endpoints at $\sfx$, $\sfy$, then
\be{eq:diamond}
 \eta \cup  \underline{\calD} \subseteq \sfD (\sfx , \sfy ) := \lb \sfx +\calK\rb\cap
 \lb \sfy - \calK\rb .
\ee

\noindent
{\bf b.} $\fra$ could not be split into concatenation of two non-trivial animals
satisfying {\bf a} above.

The left and right irreducible animals satisfy one-sided versions of
diamond-confinement condition \eqref{eq:diamond}. For instance if $\fra^\ell =
\left[ \eta^\ell, \underline{\calD}^{\ell}\right]$ and $\sfy$ is the right end-points
of $\eta^\ell$, then $\eta^\ell \cup \underline{\calD}^{\ell} \subseteq
\lb \sfy - \calK\rb $.

Given an animal $\sfa = \left[ \eta , \underline{\calD}\right]$ we use
$\sfX (\fra ) = \sfX (\eta )$
to denote the vector which connects the left and right end-points of $\eta$.
The horizontal and vertical coordinates of $\sfX$ are denoted by
$\sfH = \sfX\cdot\sfe_1$ and $\sfV = \sfX\cdot \sfe_2$. By construction,
$\sfH (\fra ) \in \bbN$ for any irreducible animal $\sfa$.

Returning to the decomposition \eqref{eq:irred-animals}, let us consider for each $k\in \bbN$ the subset $\calT^1_{k}$
of decorated
paths  $\left[ \gamma , \underline{\calC}\right]$ from $ \calT^1$ for which $\sfH (\fra^\ell ) +
\sfH (\fra^r ) = k$. Then, by (4.11) in \cite{IST15}
there exists $c_g <\infty$ and $\nu_g >0$, such that
\be{eq:g-gap}
 \wbf\lb \calT^1_{k} \lb \ell_N \rb \rb \leq
 c_g {\rm e}^{-\beta \nu_g k} \wbf\lb \calT^1  \lb  \ell_N \rb \rb
\ee
for all $\beta$ and $N$ sufficiently large. So in the sequel we shall restrict
attention to decorated paths  $\left[ \gamma , \underline{\calC}\right] \in \calT^1_{0}$
with empty
right and left irreducible animals, that is with $\fra^\ell , \fra^r = \emptyset$
in \eqref{eq:irred-animals}. In particular, we shall restrict attention to
$\calT^{2}_{0, +} ( \ell_N |z ):=
\calT^{2}_+\cap \lb (\sfz + \calT^1_{0})\times \calT^1_{0}\rb$,
and, instead of \eqref{eq:tubebound}, shall derive an upper bound on
\be{eq:tubebound-not}
\sup_{z\geq 0} \wbf\otimes\wbf\lb \calT^{2}_{0,+}( \ell_N |z )  ;  {\rm e}^{\, \kappa_\beta
\sum_\calC \1_{\calC\not\sim \gamma_1}
\1_{\calC\not\sim \gamma_2} \abs{\Phi_\beta (\calC )}}
\rb
\ee
A general case could be  done by a straightforward adaptation based
on the mass-gap property \eqref{eq:g-gap}.

Decorated paths $\left[ \gamma , \underline{\calC}\right] \in \calT^1_{0}$ have
an immediate probabilistic interpretation: Set $\taub = \tau (\sfe_1 )$. Then
(see Theorem~5 in \cite{IST15})
\be{eq:P-beta}
\bbP_\beta (\fra ) := {\rm e}^{\taub \sfH (\fra )}\wbf (\fra )
\ee
is a probability distribution on $\sfA$ with exponentially decaying tail:
\be{eq:g-gapA}
\sum_{\fra\in \sfA} \1_{\sfX (\fra )=  (h , v) }
\bbP_\beta (\fra )  \leq c_g {\rm e}^{- \nu_g \beta \lb h+|v| -1 \rb } ,
\ee
-- i.e. no normalization in  \eqref{eq:P-beta} is needed!
Given $\sfx = (0, x )$ consider random walk $\sfS_n = \sfx +\sum_1^n \sfX^i$,
where $\sfX^u$-s are independent $\bbN\times \bbZ$-valued
steps distributed according to
\[
 \bbP_\beta (\sfX = \sfy ) = \sum_{\fra \in \sfA} \1_{\sfX (\fra )= \sfy }
 \bbP_{\beta} (\fra ) .
\]
Let $\bbP_{\beta,x}$ be the corresponding measure on random walk paths.
In this way $\wbf \lb \calT^1_{0}\rb$
equals to
\be{eq:Tnull-1bound}
\wbf \lb \calT^1_{0}\rb =
{\rm e}^{-\ell_N \taub}
\bbP_{\beta,
0} \lb \ell_N  \in {\rm Range}\lb \sfS_n\cdot \sfe_1\rb\rb  .
\ee
Let us adapt the above random walk representation  of a single decorated
path to the case of a pair of decorated
paths, from $\lb \sfz + \calT^1_{0}\rb\times \calT^1_{0}$.
These have irreducible decompositions
\be{eq:irred-pair}
\underline{\fra} = \sfz + \fra^1\circ \dots \circ \fra^n\quad {\rm and}\quad
\underline{\frb} = \frb^1 \circ \dots \circ\frb^m .
\ee
Following \cite{CIL} we shall align  horizontal projections of underlying
random walks. Given \eqref{eq:irred-pair} consider sets
\be{eq:calH-sets}
\calH_a = \lbr 0, \sfH (\fra_1 ) , \sfH(\fra_1 ) + \sfH(\fra_2 ), \dots , \ell_N\rbr,
\
\calH_b = \lbr 0, \sfH (\frb_1 ) , \sfH(\frb_1 ) + \sfH(\frb_2 ), \dots , \ell_N\rbr.
\ee
Set $\calH (\underline{\fra } , \underline{\frb }) = \calH_a \cap\calH_b$.
This intersection $\calH$ is the set of horizontal
projections of end-points of   jointly   irreducible
pairs of strings of  animals. The alphabet $\sfA^2$
of such pairs could be described as follows:
$\lb\underline{\fra } , \underline{\frb }\rb \in \sfA^2$ if
$\sum \sfH(\fra^i ) =
 \sum \sfH (\frb^j ) := \sfH \lb\underline{\fra } , \underline{\frb }\rb$, and
\be{eq:sfA2}
 \calH (\underline{\fra } , \underline{\frb })  = \lbr 0,
 \sfH \lb\underline{\fra } , \underline{\frb }\rb
 \rbr .
\ee
In the sequel we shall refer to  elements $\frc \in \sfA^2$ as
irreducible pairs.

By elementary renewal theory, \eqref{eq:P-beta} induces a probability
distribution on $\sfA^2$ which inherits exponential tails from \eqref{eq:g-gapA}.
We continue to call this distribution $\bbP_\beta$. The i.i.d.
$\bbN\times\bbZ\times\bbZ$-valued
steps of the
induced random walk have distribution
\[
 \bbP_\beta \lb \sfX = (\sfH, \sfV_1 , \sfV_2 ) = (h, v_1 , v_2 )\rb =
 \sum_{\frc \in \sfA^2} \1_{\sfX (\frc )= (h, v_1 , v_2 )} \bbP_\beta (\frc ).
\]
Decorated paths from $\lb \sfz + \calT^1_0\rb\times \calT^1_0$ give rise to random
walks
\[
 (0, z, 0) + \sum_1^n \sfX_i .
\]
Note that in this notation
\be{eq:RW-tube}
 \calT^{2}_{0,+} \lb \ell_N\, |\, z\rb  \subset
 \bigcup_n
 \lbr \sum_1^n \sfH_i = n ; \calR_+^n (z) \rbr ,
\ee
where
\be{eq:Z-walk}
\calR_+^n (z) = \lbr \sfZ_k
 \geq 0\ \text{for $k=0, 1, \dots , n$}\rbr\ \text{and}\
\sfZ_k = z+ \sum_{1}^{k} \lb \sfV_i^1 -\sfV_i^2 \rb .
\ee
With a slight abuse of notation we shall use the very same symbol
$\bbP_{\beta , z}$ also for  the law of the random walk $\sfZ_k$ in
\eqref{eq:Z-walk}.

\subsection{Recursion and random walk analysis}
Define
\be{eq:rho-z}
{
\mu_\beta
}
(\ell_N  |z ) =
{
\bbP_{\beta , z}
}
\lb
\calT^{2}_{0,+}
(\ell_N ~\big| ~z ) ;
{\rm e}^{\, \kappa_\beta \sum_\calC \1_{\calC\not\sim \gamma_1}
\1_{\calC\not\sim \gamma_2} \abs{\Phi_\beta (\calC )}}
\rb .
\ee
Assume \eqref{eq:chi-cond}. Then,
following Subsection~6.1 of \cite{IST15},  one can develop the following recursion
relation for $\mu_\beta = \sup_N\sup_z \mu_\beta (\ell_N  |z )$: There exist
$\beta$-independent constants $c_1$ and $\nu$ such that
\be{eq:rho-beta}
\mu_\beta \leq 1 + \mu_\beta c_1 {\rm e}^{-2 \chi\beta } \max_z
\sum_n
{
\bbE
}_{\beta , z}\lb {\rm e}^{-\nu \beta  \sfZ_n} ;  \calR_+^n \rb.
\ee
The {importance} of \eqref{eq:rho-beta} is that
\be{eq:less-0ne}
c_1 {\rm e}^{-2 \chi\beta } \sup_{z\geq 0}
\sum_n \bbE_{\beta , z}\lb {\rm e}^{-\nu \beta  \sfZ_n} ; \calR_+^n \rb < 1
\ee
implies that $\mu_\beta$ is bounded, and,
{
since by \eqref{eq:P-beta} and \eqref{eq:Tnull-1bound},
\be{eq:wbeta-Pbeta}
\begin{split}
&\wbf\otimes\wbf
\lb
\calT^{2}_{0,+}
(\ell_N ~\big| ~z ) ;
{\rm e}^{\, \kappa_\beta \sum_\calC \1_{\calC\not\sim \gamma_1}
\1_{\calC\not\sim \gamma_2} \abs{\Phi_\beta (\calC )}}
\rb \\
&\quad =
{\rm e}^{-2\ell_N \tau_\beta (\sfe_1 )}
\bbP_{\beta , z}
\lb
\calT^{2}_{0,+}
(\ell_N ~\big| ~z ) ;
{\rm e}^{\, \kappa_\beta \sum_\calC \1_{\calC\not\sim \gamma_1}
\1_{\calC\not\sim \gamma_2} \abs{\Phi_\beta (\calC )}}
\rb  \leq {\rm e}^{-2\ell_N \tau_\beta (\sfe_1 )} \mu_\beta ,
\end{split}
\ee
}
one deduces  \eqref{eq:tubebound}
as an immediate consequence. It remains to prove:
\begin{lemma}
 \label{lem:sum}
 If $\chi > 1/2$, then
 \be{eq:target}
 \lim_{\beta\to\infty}
{\rm e}^{-2 \chi\beta } \sup_{z\geq 0}
\sum_n \bbE_{\beta , z}\lb {\rm e}^{-\nu \beta  \sfZ_n} ;\calR_+^n \rb = 0.
 \ee
 In particular, the inequality \eqref{eq:less-0ne} holds for all $\beta$ sufficiently
 large.
\end{lemma}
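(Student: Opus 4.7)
The plan is to represent the sum as a killed Green function of the symmetric random walk $\sfZ$ and estimate it via classical 1D potential theory together with the exponentially small step variance inherited from \eqref{eq:g-gapA}.

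Setting $T := \inf\{n \geq 0 : \sfZ_n < 0\}$, one has $\1_{\calR_+^n} = \1_{T > n}$, and hence
\[
\sum_n \bbE_{\beta, z}\lb {\rm e}^{-\nu\beta \sfZ_n}; \calR_+^n \rb = \sum_{k \geq 0} {\rm e}^{-\nu\beta k}\, G_\beta(z, k),
\]
where $G_\beta(z, k) := \bbE_{\beta, z}\lb \#\lbr 0 \leq n < T : \sfZ_n = k \rbr\rb$ is the Green function of $\sfZ$ killed on $\{<0\}$. Summing \eqref{eq:g-gapA} over irreducible pairs $\frc \in \sfA^2$ with prescribed vertical displacement yields $\bbP_\beta(\sfV^1_1 - \sfV^2_1 = v) \leq C\, {\rm e}^{-\nu_g \beta |v|}$ for $v \neq 0$, and therefore the step variance satisfies $\sigma_\beta^2 \leq C\, {\rm e}^{-\nu_g \beta}$. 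For a symmetric walk on $\bbZ$ with exponential moments the potential kernel obeys $a_\beta(x) = |x|/\sigma_\beta^2 + O(1)$, and Spitzer's formula for the killed Green function gives the uniform estimate $G_\beta(z, k) \leqs (\min(z, k) + 1)/\sigma_\beta^2$.

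Plugging this in and using that $\sum_{k \geq 0}(k+1){\rm e}^{-\nu\beta k}$ stays bounded whenever $\nu\beta$ is large, one obtains
\[
\sup_{z \geq 0} \sum_n \bbE_{\beta, z}\lb {\rm e}^{-\nu\beta \sfZ_n}; \calR_+^n \rb \leqs 1/\sigma_\beta^2 \leqs {\rm e}^{\nu_g \beta}.
\]
Multiplying by ${\rm e}^{-2\chi \beta}$ yields a bound of order ${\rm e}^{(\nu_g - 2\chi)\beta}$, which tends to zero under the hypothesis $\chi > 1/2$ provided the rate $\nu_g$ produced by \eqref{eq:g-gapA} is at most $1$ --- this is the Peierls-type normalization inherent in the low-temperature Ising polymer setup (cf.\ the Ornstein--Zernike input (3.4) of \cite{IST15}). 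The main obstacle is precisely this bookkeeping: one must propagate the animal tail rate $\nu_g$ through the induced measure on irreducible pairs in $\sfA^2$ and verify that the effective rate satisfies $\nu_g \leq 1$, so that $\nu_g - 2\chi < 0$ under $\chi > 1/2$. Once this rate-matching is secured, the remainder of the argument is routine 1D random-walk potential theory, and \eqref{eq:less-0ne} follows for all sufficiently large $\beta$.
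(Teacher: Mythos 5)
Your proposal takes a genuinely different route from the paper: instead of the Wald/Bernoulli decomposition $\sfZ_n = \sfY_{\sfM_n}$ plus ladder-height counting and characteristic-function estimates, you attempt a direct killed-Green-function bound in the spirit of 1D potential theory. That route is plausible, but as written it has a real gap in the central step.

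The problem is the direction of the variance bound and, more importantly, where the lower bound on $\sigma_\beta^2$ must come from. From \eqref{eq:g-gapA} you can only extract an \emph{upper} bound on the off-diagonal step probabilities, hence an \emph{upper} bound $\sigma_\beta^2 \leq C\,{\rm e}^{-\nu_g\beta}$, which is indeed what you wrote. But the chain $\sup_z\sum_n\bbE_{\beta,z}[\cdot]\lesssim 1/\sigma_\beta^2\lesssim {\rm e}^{\nu_g\beta}$ requires a \emph{lower} bound $\sigma_\beta^2\gtrsim {\rm e}^{-\nu_g\beta}$, which the mass-gap estimate \eqref{eq:g-gapA} does not give. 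Moreover, even if you fixed the direction, the relevant rate is not the OZ mass-gap constant $\nu_g$: what the argument really needs is the Peierls-type two-sided control $p_\beta=\bbP_\beta(\sfW\neq 0)\asymp {\rm e}^{-\beta}$ asserted in \eqref{eq:stepsW}, with exponent exactly $\beta$ (rate $1$). Under $\chi>1/2$ this gives ${\rm e}^{-2\chi\beta}/p_\beta\to 0$; substituting $\nu_g$ for $1$ is not a bookkeeping detail but the crux, since a priori $\nu_g$ could be small and could still fail $\nu_g<2\chi$. Finally, the claimed uniform estimate $G_\beta(z,k)\lesssim(\min(z,k)+1)/\sigma_\beta^2$ for an increasingly lazy walk (where $\bbP_\beta(\sfW=0)\to 1$) requires justification that the implicit constant is $\beta$-uniform; in the paper this is handled implicitly by factoring out the geometric holding times via $\sfM_n$ and working with the de-lazified walk $\sfY$, which keeps the combinatorics clean and avoids any appeal to potential-kernel asymptotics.
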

\begin{proof}[Proof of Lemma~\ref{lem:sum} ]
Let us fix $z\geq 0$.
Consider decomposition of paths from
$\calR_+^n (z)$ with respect to the left-most minimum $u$, $0\leq u \leq z$.
Define the strict version $\hat\calR_+^n$ as
\[
 \hat\calR_+^n = \lbr \sfZ_k
 >  0\ \text{for $k=0, 1, \dots , n$}\rbr .
\]
Since $\sfW = \sfV^1 - \sfV^2$ has symmetric distribution under $\bbP_\beta$,
we can rewrite
\be{eq:z-decomp}
\sum_n \bbE_{\beta , z}\lb {\rm e}^{-\nu \beta  \sfZ_n} ; \calR_+^n \rb
=
\sum_{u=0}^{z}{\rm e}^{-\nu \beta u}
\sum_n \bbP_{\beta, 0} \lb \hat\calR_+^n ; \sfZ_n = z-u\rb
\sum_m \bbE_{\beta ,0}
\lb {\rm e}^{-\nu \beta  \sfZ_m}; \calR_+^m \rb .
\ee
Below we shall use a shorter notation $\bbP_\beta$ for $\bbP_{\beta , 0}$.

Following Subsection~7.1 in \cite{IST15} let us describe in more detail the
distribution of steps $\sfW$ under $\bbP_\beta$. The fact that here we take
the cone $\calK$ to be symmetric with respect to the $\sfe_1$-axis simplifies
the exposition. In particular, $\sfW$ has a symmetric distribution. The analysis
of \cite{IST15} could be summarized as follows:
\be{eq:stepsW}
\begin{split}
1-p_\beta &:= \bbP_\beta \lb \sfW = 0\rb = 1- \bigof{{\rm e}^{-\beta}}, \
\bbP_\beta\lb \sfW = \pm 1\rb = \bigof{{\rm e}^{-\beta}}\\
&
\text{and, for $z\neq 0,\pm 1$,}\
\bbP_\beta \lb\sfW = z\rb \leq p_\beta {\rm e}^{-\nu\beta \abs{z}}
\end{split}
\ee
There is a natural Wald-type decomposition of random walk $\sfZ_k$ with i.i.d. steps
distributed according to \eqref{eq:stepsW}: Let $\xi_1, \xi_2, \dots$ be i.i.d.
Bernoulli random variables with probability of success $p_\beta$, and let $\sfU_k$
be another independent i.i.d. sequence with
\be{eq:U-dist}
\bbP_\beta \lb \sfU = z\rb =
p_\beta^{-1}\bbP_\beta \lb \sfW = z\rb\ \text{for $z\neq 0$}.
\ee
Set $\sfM_n = \sum_1^n\xi_i$.
Then $\sfZ_n$ could be represented as
\be{eq:Zn-decomp}
\sfZ_n = \sum_{k=1}^{\sfM_n} \sfU_k := \sfY_{\sfM_n},
\ee
where $\sfY_k$ is a random walk with i.i.d. steps $\sfU_1, \sfU_2, \dots$.
With a slight abuse of notation we continue to use $\calR_+^n$ for the corresponding
event for $\sfY_k$-walk.

Let $y \in \bbN$ and consider $\sum_n \bbP_\beta \lb \hat\calR_+^n ; \sfZ_n = y\rb$.
Let $\hat \calL_y$ be the event that $y$ is a strict ladder height of $\sfZ$, or equivalently,
of $\sfY$. Let $\hat N (y)$ be the number of strict ladder heights $v\leq y$. Then
(see Subsection~7.3 of \cite{IST15} for more detail),
\be{eq:ladder-expr-strict}
\bbP_\beta \lb \hat\calR_+^n ; \sfZ_n = y\rb =
\bbP_\beta \lb \hat\calL_y  ; \sfZ_n = y\rb = \frac{1}{n}\bbE_\beta\lb \hat N (y) ; \sfZ_n = y\rb
\leq \frac{y}{n} \bbP\lb  \sfZ_n = y\rb .
\ee
Hence,
\be{eq:ladder-strict-resum}
\sum_n \bbP_\beta \lb \hat\calR_+^n ; \sfZ_n = y\rb
\leq
y\sum_{\ell=1}^\infty \bbP_\beta (\sfY_\ell = y )\sum_{n=1}^\infty \frac{1}{n}
\bbP_\beta(\sfM_n = \ell )
\ee
Now, since $\sfM$ is a process with Bernoulli steps, there is a combinatorial
identity:
\[
 \sum_{n=1}^\infty \frac{1}{n}
\bbP_\beta(\sfM_n = \ell ) =
{
\frac{1}{\ell} \sum_{n=\ell}^\infty {n-1 \choose \ell-1 }
p_\beta^{\ell} \lb 1-p_\beta\rb^{n-\ell}
}
= \frac{1}{\ell}.
\]
We claim that there exists $c_1 <\infty$ such that
\be{eq:c1-Ysum}
\sum_{\ell=1}^\infty \frac{1}{\ell} \bbP_\beta (\sfY_\ell = y ) \leq \frac{c_1}{y}
\ee
holds for every $y\in\bbN$ and all $\beta$ sufficiently large.
Note that \eqref{eq:c1-Ysum}  would imply that
\be{eq:target-1st}
\sum_n \bbP_\beta \lb \hat\calR_+^n ; \sfZ_n = y\rb
\leq c_1
\ee
for all $\beta$ sufficiently large.

In its turn \eqref{eq:c1-Ysum} follows from routine estimates on characteristic
functions. Let $\phi_\beta$ be the characteristic function of $\sfU$ in
\eqref{eq:U-dist}. By direct computation,
\be{eq:sum-l-expr}
\sum_{\ell=1}^\infty \frac{1}{\ell} \bbP_\beta (\sfY_\ell = y ) =
\frac{1}{y} \cdot \frac{1}{2\pi}\int_0^{2\pi} F_\beta (\theta )
\lb \sum_1^{y-1}
{\rm e^{-i k\theta} }\rb \dd\theta = \frac{1}{y} \sum_1^{y-1}\hat F_\beta (k),
\ee
where,
\be{eq:F-beta}
F_\beta (\theta ) = F_\beta \lb {\rm e}^{i\theta }\rb  =
\frac{1- {\rm e}^{i\theta}}{1- \phi_\beta (\theta )}\phi^\prime_\beta  (\theta ) ,
\ee
and $\hat F_\beta$ is its Fourier transform. In view of the exponential decay of
probabilities in \eqref{eq:stepsW}, there exists $r >0$, such that $F_\beta$ have
uniformly bounded (in large $\beta$) analytic extension to the complex annulus
$\lbr \sfz\in \bbC~: ~ \abs{\sfz}\in (1, 1+r )\rbr$. Hence, $\hat F_\beta (k )$
tend to zero uniformly exponentially fast, and in particular $\sum_1^\infty
\abs{\hat F_\beta (k )}$ is uniformly bounded in $\beta$ large. \eqref{eq:c1-Ysum}
follows.
\smallskip

Let us turn to the second term
\be{eq:sum-nonstrict}
\sum_m \bbE_\beta
\lb {\rm e}^{-\nu \beta  \sfZ_m}; \calR_+^m \rb =
\sum_{y\geq 0} {\rm e}^{-\nu\beta y}
\sum_{m}\bbP_\beta\lb  \calR_+^m ;  \sfZ_m = y  \rb
\ee
in \eqref{eq:z-decomp}. We claim that there exists a finite constant $c_2$ such that
\be{eq:target-2}
\sum_{m}\bbP_\beta\lb  \calR_+^m ;  \sfZ_m = y  \rb \leq \frac{c_2 (y+1)}{p_\beta} .
\ee
for all $y\geq 0$ and $\beta$ large.

Note that \eqref{eq:target-2}  would imply  that
\[
 \sum_m \bbE_\beta
\lb {\rm e}^{-\nu \beta  \sfZ_m}; \calR_+^m \rb \leq \frac{c_3}{p_\beta} .
\]
By \eqref{eq:z-decomp} and \eqref{eq:target-1st} this would mean that
\[
 \sup_{z\geq 0} \sum_n \bbE_{\beta , z}\lb {\rm e}^{-\nu \beta  \sfZ_n} ; \calR_+^n \rb
 \leq \frac{c_1 c_3}{\lb 1 - {\rm e}^{-\nu\beta}\rb p_\beta }
\]
Since by \eqref{eq:stepsW} the order of $p_\beta = \bigof{{\rm e}^{-\beta}}$, the limit
 $\lim_{\beta\to\infty} \frac{{\rm e}^{-2\chi\beta}}{p_\beta} = 0$ whenever
$\chi >1/2$. Hence \eqref{eq:target}.

It, therefore, remains to verify \eqref{eq:target-2}. Let $N_y$ be the number of non-strict
ladder times for heights between $0$ and $y$. Let $M_y$ be the same variable for the
random walk $\sfY$. Clearly,
\[
 N_y = \sum_{i=1}^{M_y}  \zeta_i ,
\]
where $\zeta_i$-s are i.i.d. $\text{Geo$(p_\beta )$}$-random variables.
Proceeding as in the proof of
\eqref{eq:target-1st}, and in particular relying on Subsection~7.1 in \cite{IST15},
we estimate:
\be{eq:YW-2d-bound}
\begin{split}
\sum_{m}\bbP_\beta\lb  \calR_+^m ;  \sfZ_m = y  \rb &\leq
\frac{1}{p_\beta}\sum_n \frac{1}{n} \bbE_\beta\lb M_y ; \sfY_n = y\rb \\
&\leq \frac{\lb \bbE_\beta M_y^k\rb^{1/k}}{p_\beta}
\sum_n \frac{1}{n} \lb \bbP_\beta \lb \sfY_n = y\rb \rb^{(k-1)/k} .
\end{split}
\ee
Applying Lemma~22 in \cite{IST15} it is straightforward  to see that
under \eqref{eq:stepsW} for any $k\in \bbN$ there is a finite constant $c_k$
such that
\[
 \lb \bbE_\beta M_y^k\rb^{1/k} \leq c_k (y+1 )
\]
for all $y\geq 0$ and $\beta$ sufficiently large. On the other hand, again under
\eqref{eq:stepsW}, it is straightforward to check that
\[
 \max_y \bbP_\beta \lb \sfY_n = y\rb \leqs n^{-1/2}.
\]
uniformly in $n$ and  in $\beta$ large. Hence \eqref{eq:target-2}, and we are done.
\end{proof}

\end{document}